\newtheorem{theorem}{Theorem}[section]
\newtheorem{lemma}[theorem]{Lemma}
\newtheorem{prop}[theorem]{Proposition}
\theoremstyle{definition}
\numberwithin{equation}{section}
\def    \R      {{\mathbb R}}
\begin{document}

\title[Finiteness for Cross Central Configurations]{ Generic Finiteness for a Class of Symmetric Planar Central Configurations of the Six-Body Problem and the Six-Vortex Problem}

\author[Dias]{Thiago Dias}
\address{Department of Mathematics, Universidade Federal Rural de Pernambuco, av. Don Manoel de Medeiros s/n, Dois Irmãos - Recife - PE
52171-900, Brasil}
\curraddr{Department of Mathematics, National Tsing Hua University, Hsinchu City, 30013, Taiwan }
\email{thiago.diasoliveira@ufrpe.br}

\author[Pan]{Bo-Yu Pan}
\address{Department of Mathematics, National Tsing Hua University, Hsinchu City, 30013, Taiwan }
\email{bypan@math.nthu.edu.tw }

\subjclass[2010]{Primary 37N05, 70F10, 70F15, 76B47; Secundary  13P10, 13P15, 14A10. }
\keywords{$n$-body problem, n-vortex problem, central configuration, Groebner basis, Jacobian criterion, elimination theory}
\date{}
\begin{abstract}
A symmetric planar central configuration of the Newtonian six-body problem $x$ is called \emph{cross central configuration} if there are precisely four bodies on a symmetry line of $x$.  We use complex algebraic geometry and Groebner basis theory to prove that for a generic choice of positive real masses $m_1,m_2,m_3,m_4,m_5=m_6$ there is a finite number of cross central configurations. We also show one explicit example of a configuration in this class. A part of our approach is based on relaxing the output of the Groebner basis computations. This procedure allows us to obtain upper bounds for the dimension of an algebraic variety.  We get the same results considering cross central configurations of the six-vortex problem. 
\end{abstract}
\maketitle

\section{Introduction}

 One of the leading open questions in the central configurations theory is the finiteness problem:  For every choice of $n$ point mass  $m_1,...,m_n$, is the number of central configurations finite?

  Chazy and Wintner contributed significantly to the interest in this problem that appears in the Smale's list for the Mathematicians of the 21st century \cite{SM}. Hampton and Moeckel used BKK theory to obtain the finiteness for central configurations of the four-body problem in the Newtonian case \cite{HM1} and the vortex case~\cite{HM2}.  Albouy and Kaloshin proved that for a choice of masses $m_1,...,m_5$ in the complement of a codimension-$2$ algebraic variety on the mass space, there is a finite number of planar central configurations of the Newtonian five-body problem  \cite{AK}. They studied the behavior of unbounded singular sequences of normalized central configurations  going to the infinity. 

In this paper, we consider symmetric planar central configurations with four points on a symmetry line in the context of the Newtonian six-body problem and the six-vortex problem. This type of configuration will be called {cross central configuration}.  
In the last years,  symmetric central configurations received much attention.  Leandro proved finiteness and studied bifurcations for a class of $d$-dimensional symmetric central configurations with $d+2$ bodies by using the method of rational parametrization \cite{LE}.  Albouy proved that every central configuration of the four-body problem with four equal masses is symmetric \cite{AA}.  Albouy, Fu, and Su provided the necessary and sufficient condition for a  planar convex four-body central configuration be symmetric with respect to one of its diagonals \cite{AFS}. Problems involving existence or enumeration of symmetric central configurations  satisfying some geometrical constraints were considered for many researchers (See for instance \cite{FL}, \cite{CZ}, \cite{GL}, \cite{JS}, \cite{PST}, and \cite{XZ}). Montaldi proved that there is a central configuration for every choice of a symmetry type and symmetric choice of mass \cite{MO}.
 
 The complex algebraic geometry has been used in the last decades  to study central configurations. We mention some papers related to this work. O'Neil utilized results on regular maps to count the number of equilibria and rigid translation configurations in the $n$-vortex problem \cite{ON}. Hampton, Roberts, and Santoprete studied relative equilibria of the four-vortex problem with two pairs of equal vorticities \cite{HRS}. They used exciting ideas involving Groebner basis computations, elimination theory, and the Jacobian criterion. Tsai applied the Hermite root counting theory and Groebner basis to obtain an exact counting theorem for special cases of the four-vortex problem \cite{TS}.
Moeckel proved the generic finiteness for Dziobek configurations of the Newtonian four-body problem \cite{RM1}. The fundamental tools used in this work to obtain the finiteness  were the Sard theorem for complex algebraic varieties and results due to Whitney about the structure of real algebraic varieties \cite{WT}. Moeckel utilized resultants and the dimension of fibers theorem to prove the generic finiteness for Dziobek configurations of the Newtonian $n$-body problem  \cite{RM2}, and  Dias used the Jacobian criterion to generalize this last result of Moeckel for potentials with semi-integer exponents \cite{DI}. The main result of the present paper is:

\begin{theorem}\label{main}
There is a proper open set $B$ of  $\mathbb{R}^5$ such that if $(m_i)=(m_1,...,m_5)\in \mathbb{R}^5\setminus B$ the number of cross central configurations of the Newtonian six-body problem is finite. 
\end{theorem}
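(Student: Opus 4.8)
The plan is to set up the problem as an algebraic geometry question about the dimension of a variety and then use a genericity argument. First I would introduce coordinates adapted to the cross symmetry: placing the symmetry line as the $x$-axis, the four collinear bodies $m_1,m_2,m_3,m_4$ have positions $(x_i,0)$, while the symmetric pair $m_5=m_6$ sits at $(a,\pm b)$. The central configuration equations (the Andoyer/Dziobek relations expressing that the acceleration of each body is proportional to its position relative to the center of mass) then become a polynomial system in the position variables $x_1,\dots,x_4,a,b$ and the masses $m_1,\dots,m_5$, after clearing the denominators coming from the mutual distances $r_{ij}$. The symmetry reduces the number of independent equations, and the usual normalizations (fixing the center of mass at the origin and scaling the total size) cut down the remaining freedom so that the solution set, for fixed generic masses, is expected to be finite.

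The core of the argument is to view the whole construction as an incidence variety. Let $V \subset \cc^N \times \cc^5$ be the affine variety cut out by the polynomial central configuration equations, where the first factor records the configuration variables and the second records the masses, and let $\pi : V \to \cc^5$ be the projection onto the mass space. The strategy is to show that $\dim V \le 5$; by the theorem on the dimension of the fibers of a dominant morphism, this forces the generic fiber $\pi^{-1}(m)$ to be finite (dimension zero), which is exactly the desired generic finiteness over an open dense subset of the mass space, and the real statement of Theorem~\ref{main} follows by intersecting with $\rr^5$ and taking the complement of a proper closed (hence the complement of an open) set $B$. To establish $\dim V \le 5$ I would compute a Groebner basis of the ideal generated by the central configuration equations together with the normalization constraints, and apply the \emph{relaxation} technique described in the abstract: rather than solving the system exactly, one extracts from the Groebner basis computation an upper bound for the Krull dimension of the variety. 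This is where the Jacobian criterion enters — at an explicit sample point (the explicit configuration exhibited in the paper) one checks that the Jacobian of the defining system has the expected maximal rank, certifying that the corresponding component of $V$ has dimension exactly $5$ and that $\pi$ is dominant on it.

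The main obstacle, as I see it, is controlling the \emph{spurious} solutions introduced by clearing denominators. The mutual-distance factors $r_{ij}$ must be nonzero for a genuine configuration, so one must either saturate the ideal with respect to the product of all the $r_{ij}$ (and any other degeneracy polynomials, such as those forcing distinct bodies to collide or forcing the symmetric pair onto the axis) or otherwise excise the collision and lower-symmetry loci before reading off the dimension. Saturation is computationally expensive and can dramatically alter the Groebner basis, so handling it — and verifying that the relaxed dimension bound survives saturation — is the delicate technical heart of the proof. A secondary difficulty is ensuring that the configuration variety is genuinely cut down to the right dimension by the symmetry and normalization choices, rather than carrying hidden positive-dimensional families coming from the continuous symmetries (rotation, translation, scaling) of the problem; these must be quotiented out carefully so that the remaining dimension count reflects honest geometric central configurations and not mere symmetry orbits.

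Finally, I would assemble these pieces: the dimension bound $\dim V \le 5$ from the relaxed Groebner computation, the matching lower bound $\dim V \ge 5$ from the explicit example together with the Jacobian criterion, and the fiber-dimension theorem to conclude that $\pi$ has generically finite fibers. Setting $B$ to be the complement in $\rr^5$ of the image of the open locus over which the fibers are finite yields the proper open set claimed, completing the proof of Theorem~\ref{main}.
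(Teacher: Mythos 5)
Your overall skeleton matches the paper's: an incidence variety over the mass space, the bound $\dim \le 5$, the fiber--dimension theorem, and an explicit example certified via the Jacobian criterion. But the step that carries all the weight --- actually proving $\dim V \le 5$ --- is where your proposal has a genuine gap, in two places. First, you propose to ``compute a Groebner basis of the ideal generated by the central configuration equations together with the normalization constraints'' and read off a dimension bound. This computation is not feasible for the full system, and the paper's entire architecture exists to avoid it: it introduces auxiliary variables $S_{ijk}$ for the quantities $r_{ij}^{-3}-r_{jk}^{-3}$, projects the variety $\Omega$ onto the $11$-dimensional mutual-distance space (whose image lies in a $4$-dimensional irreducible variety $E$ cut out by the shape equations), and exploits the block-triangular structure of the Jacobian of the fiber equations to reduce everything to the rank of a single $5\times 4$ matrix $\bigl[\partial L_i/\partial M_j\bigr]$. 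The only Groebner computations performed are on small ideals (the shape equations plus one $3\times 3$ minor at a time), and even there only a partial list of leading terms is extracted. Your ``relaxation'' sentence gestures at this last device but attaches it to the wrong, intractable computation.

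Second, your use of the Jacobian criterion at the explicit sample point only controls the irreducible component containing that point; it cannot certify $\dim V\le 5$ globally, since $V$ may have many components and the ones not containing your sample point are left unbounded. The paper instead runs a case analysis over \emph{all} irreducible components $\Omega_i$, stratified by $\dim(\overline{\pi_1(\Omega_i)})\in\{0,1,2,3,4\}$ and by whether $\Omega_i$ lies in the determinantal loci $\Delta_k$ where the rank of $\bigl[\partial L_i/\partial M_j\bigr]$ drops: a geometric argument shows the rank is always $\ge 2$ on points coming from genuine configurations, a partial Groebner basis argument shows components inside $\Delta_3$ project to something of dimension $\le 2$, and the explicit example (where the rank is $4$) is used only in the top case $\dim(\overline{\pi_1(\Omega_i)})=4$ to rule out $\Omega_i\subset\Delta_4$. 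In each case $\dim(\Omega_i)\le k+(5-k)=5$. Without some version of this component-by-component bookkeeping, your argument does not establish the dimension bound, and the conclusion does not follow. (Your concern about saturation, by contrast, is a non-issue in the paper's setup: it works with the quasi-affine set $\Omega=\widetilde V\setminus D$ and simply observes that points coming from actual configurations avoid the collision locus $D$.)
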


In section $\ref{AVF}$, we describe cross central configurations with a polynomial system based on Laura-Andoyer equations with $32$ variables and $28$ equations that defines an algebraic variety $\Omega$. The mass space has dimension five. Hence, by the dimension of fibers theorem, in order to obtain our finiteness result, it is sufficient to show that $\text{dim}(\Omega)\leq 5$. To do this, we study the fibers of the projection of $\Omega$ on the mutual distances space by applying the Jacobian criterion and the dimension of fibers theorem.  The advantage of using the Jacobian criterion lies in the fact that this method reduces the problem of determining the dimension of an algebraic variety to the computation of the rank of the Jacobian matrix. In this way, if the Jacobian matrix is sufficiently sparse, this method can be effective even when the number of variables of the polynomial systems studied is huge. In other words, the Jacobian criterion is useful to compute the dimension of algebraic varieties defined by polynomials with many variables and few terms.  In our case, the Jacobian matrix presented in section $\ref{FN}$ is block-triangular. This fact allows us to reduce our study to determine the rank of the $4\times 5$ matrix with polynomial entries.  In our proof, we also use Groebner basis theory to do the rank computations. To determine a Groebner basis for the central configuration polynomials systems is a tough computational task in general. To deal with this problem, in the lemma \ref{lemmaD3} we compute a partial Groebner basis with a sufficient number of leading terms to obtain an upper bound for the dimension of certain irreducible components of the algebraic variety $\Omega$. In section $\ref{anexample}$, we use basic elimination theory to obtain an example of a cross central configuration that play an important role in our argument.  In section \ref{Vortexcase}, we prove a version of Theorem \ref{main} for planar central configurations of the six-vortex problem. At this point, we obtain the first finiteness results for the planar six-body problem even restricting to particular classes. In sections $\ref{anexample}$ and $\ref{ARG}$, we include results from algebraic geometry making our exposition self-contained. The sections \ref{AVF} and \ref{FN} contains computer-aided proofs. We make the computations on SageMath \cite{sage} and Singular \cite{DGPS}.  The SageMath notebooks can be found in \cite{DIG}.
%\begin{itemize}
%\item To Make bibliographic revision about this problem 
%\end{itemize}

\section{Cross Central Configurations}\label{formul}
In this section, we provide appropriate polynomial parametrization for the mutual distances associated with a cross central configuration.

A configuration $x=(x_1,...,x_n)\in \mathbb{R}^{dn}\setminus\Delta$,   is called \emph{central} in $\mathbb{R}^{d}$ if there exists $\lambda \neq 0$ such that
\begin{equation}\label{eqcc}
\sum_{j\neq i}m_{j}(x_{j}-x_{i})r_{ij}^{-3}+\lambda(x_i-c)=0, \qquad i=1,...,n,
\end{equation}
where, 
$$\Delta \;=\;\{x=(x_1,...,x_n) \in (\R^d)^n: x_i=x_j\; \text{for some}\; i\neq j\},\quad r_{ij}=\|x_i-x_j\|,$$
$$ c=\frac{1}{M}\left(m_1x_1+\cdots m_nx_n\right) \quad \text{and} \quad M=m_1+\cdots m_n \neq 0$$
are, respectively, the \emph{collision configuration set}, \emph{the mutual distances of the bodies}, the  \emph{center of mass} and the \emph{total mass}.

The dimension of a configuration $x$, denoted by $\delta(x)$, is the dimension of the smallest affine space that contains the points  $x_1,...,x_n \in \mathbb{R}^{d}.$ In this work, we only consider central configurations with six bodies and dimension two. An excellent introductory text about central configurations theory is \cite[Ch.II]{LMS}.

Central configurations are the initial conditions for homographic orbits of the $n$-body problem. They are invariant under rotations, translations, and dilations. When we consider the finiteness problem, it is natural to restrict our counting to the classes of central configurations modulo these transformations.

Let $x=(x_{1},x_2,x_3,x_3,x_{5},x_{6})$ be a symmetric planar central configuration  of the Newtonian six-body problem in $\mathbb{R}^2$  for which there are  four bodies on a symmetry line $s$ of the set $X=\{x_1,x_2,x_3,x_3,x_5,x_6\}$. This configurations will be named \emph{cross central configurations} of the six-body problem. For short we use the notation CC6BP. We index the bodies so that $x_1$, $x_2$, $x_3$ and $x_4 \in s$.  In this way the bodies $x_5$ and $x_6$ belong to a line $l$ perpendicular to $s$.

Take a orthogonal system of coordinates for $\mathbb{R}^2$ such that the line $l$ is parallel to  the $x$-axis.  The coordinates of the bodies $x_i$ are given by  $(x_{i1},x_{i2})$. We can to apply suitable homothety and  rotation to the configuration $x$, and re-index the bodies, if it is necessary, so that the following conditions are satisfied by a CC6BP: 

\begin{enumerate}
\item[i.] $x_{12}>x_{22}>x_{32}>x_{42}$;
\item[ii.] $x_{52}>x_{32}$;
\item[iii.] $x_{52}-x_{42}=1$;
\item[iv.] $x_{51}<x_{11}$ and $x_{61}>x_{11}.$
\end{enumerate}

Let $\mathcal{X}$ be the set of the cross central configurations of the six-body problem with the center of mass fixed in the origin of the coordinate system and satisfying the four conditions above. Note that $\mathcal{X}$ contains one representative of  each class of  cross central configurations modulo translations, rotations and dilations. 

We can use the geometric constraints satisfied for a cross central configurations $x=(x_1,...,x_6)\in \mathbb{R}^{12}$ to obtain polynomial equations for the mutual distances $r_{ij}$. In fact, given a CC6BP $x \in \mathcal{X}$, take the point $Q=s\cap l$.
\begin{figure}[h!]
\includegraphics[scale=0.6]{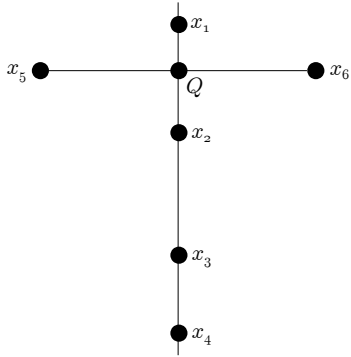}
\caption{An ilustration of CC6BP with its point $Q$.}\label{ccQ}
\end{figure}

From  the Pythagoras theorem for the triangles $x_1Qx_5$, $x_2Qx_5$, $x_3Qx_5$, $x_4Qx_5$, and the equations for the collinearity of the sets of points  $\{x_1,x_2,x_3\}$, $\{x_1,x_2,x_4\}$, $\{x_1,x_3,x_4\}$, $\{x_2,x_4,x_4\}$, it follows that the mutual distances between the bodies of a cross central configuration satisfies the following \emph{shape equations}:
\begin{align}\label{SH}
r_{12}+r_{23}-r_{13}&=0, \qquad & 4r_{15}^2-r_{56}^2-4(r_{14}-1)^2&=0           \\ \nonumber
r_{12}+r_{24}-r_{14}&=0, \qquad& 4r_{25}^2-r_{56}^2-4(1-r_{24})^2&=0,\\ \nonumber
r_{13}+r_{34}-r_{14}&=0, \qquad&4r_{35}^2-r_{56}^2-4(1-r_{34})^2&=0,\\ \nonumber
r_{23}+r_{34}-r_{24}&=0, \qquad& 4r_{45}^2-r_{56}^2-4&=0.\\ \nonumber
\end{align}

\section{Laura Andoyer Equations}
The Laura-Andoyer equations for planar and noncollinear central configurations with six bodies are the given by
\begin{equation}\label{LA}
L_{ij}=\sum_{k \neq i,j} m_{k}s_{ikj}(x_i-x_j)\wedge (x_i-x_k)=0,
\end{equation}
where $s_{ikj}=r_{ik}^{-3}-r_{jk}^{-3}$ and $1\leq i<l\leq 6$. It was proved in \cite[Ch.III]{HA} that the system \eqref{LA} is equivalent to the system of equations \eqref{eqcc} in the case of planar and noncollinear central configurations with center of mass at the origin of the coordinates system.
Note that 
\begin{align*}
(x_i-x_j)\wedge (x_i-x_k)&=\left|\begin{array}{cc}
x_{i1}-x_{j1}&x_{i1}-x_{k1}\\
x_{i2}-x_{j2}&x_{i2}-x_{k2}
\end{array}\right|e_1\wedge e_2\\
&=\left|\begin{array}{ccc}
1&1&1\\
x_{i1}&x_{j1}&x_{k1}\\
x_{i2}&x_{j2}&x_{k2}
\end{array}\right|e_1\wedge e_2.
\end{align*}
For $i,j,k$ different number from $1$ to $6$ define
\begin{equation}\label{Deltadef}
\Delta_{ijk}=\left|\begin{array}{ccc}
1&1&1\\
x_{i1}&x_{j1}&x_{k1}\\
x_{i2}&x_{j2}&x_{k2}
\end{array}\right|.
\end{equation}
If $\sigma$ is a permutation of $1,...,6$ we have
$$\Delta_{ijk}=(-1)^{\text{sgn}\sigma}\Delta_{\sigma_{i}\sigma_{j}\sigma_{k}}.$$ 
By the definition of the quantities $\Delta_{ijk}$ we can write the scalar Laura-Andoyer equations:
\begin{equation}\label{SLA}
L_{ij}=\sum_{k \neq i,j} m_{k}s_{ikj}\Delta_{ijk}=0,
\end{equation}
where $s_{ikj}=r_{ik}^{-3}-r_{jk}^{-3}$.
With these equations we can prove the following proposition:
\begin{prop}\label{massc} Given a CC6BP $x\in \mathcal{X}$  we have $m_5=m_6.$ 
\end{prop}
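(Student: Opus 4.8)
The plan is to exploit the symmetry of a cross central configuration directly. Since $x \in \mathcal{X}$ is symmetric with respect to the line $s$, and the four bodies $x_1, x_2, x_3, x_4$ all lie on $s$, the reflection across $s$ must permute the configuration. The bodies on $s$ are fixed by this reflection (as a set, and in fact each individually, given the ordering conditions i.\ forcing distinct heights), while the two off-axis bodies $x_5$ and $x_6$ must be interchanged. This is the geometric content I would make precise first: the reflection $\rho$ across $s$ satisfies $\rho(x_5) = x_6$, so $x_5$ and $x_6$ are mirror images, equidistant from $s$ and lying on the perpendicular line $l$.

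First I would record the distance symmetries forced by $\rho(x_5)=x_6$. Because reflection is an isometry fixing each $x_i$ for $i \in \{1,2,3,4\}$ and swapping $x_5 \leftrightarrow x_6$, we get $r_{i5} = r_{i6}$ for $i=1,2,3,4$. These equalities are the key input. Next I would turn to the scalar Laura-Andoyer equations \eqref{SLA} and examine the equation $L_{56}$, which is the natural place for the masses $m_5, m_6$ to appear asymmetrically. Concretely,
\[
L_{56} = \sum_{k \neq 5,6} m_k\, s_{5k6}\, \Delta_{56k} = \sum_{k=1}^{4} m_k \left(r_{5k}^{-3} - r_{6k}^{-3}\right)\Delta_{56k}.
\]
The distance symmetries $r_{5k} = r_{6k}$ immediately force every coefficient $s_{5k6} = r_{5k}^{-3} - r_{6k}^{-3}$ to vanish, so $L_{56} \equiv 0$ is automatic and yields nothing. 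This tells me $L_{56}$ is the wrong equation, and I would instead look at a pair of equations related by the symmetry, such as $L_{i5}$ versus $L_{i6}$ for a fixed axial index $i$, or an equation where $x_5$ and $x_6$ play distinguishable roles.

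The productive step is to compare $L_{15}$ and $L_{16}$ (or a suitable symmetric pair). Under the reflection $\rho$, relabeling $5 \leftrightarrow 6$ sends the equation $L_{15}=0$ to an equation formally identical to $L_{16}=0$ but with the roles of $m_5$ and $m_6$ exchanged, while all the $\Delta$ and $r$ quantities transform in a controlled way (the oriented areas $\Delta_{ijk}$ pick up signs under reflection, and distances are preserved). Subtracting the two, the terms involving the axial masses $m_1, m_2, m_3, m_4$ cancel by the distance symmetries, and one is left with a single relation of the form $(m_5 - m_6)\cdot P = 0$, where $P$ is an expression built from the mutual distances and oriented areas. The crux of the argument is then to verify $P \neq 0$ for a genuine configuration $x \in \mathcal{X}$: the ordering and non-degeneracy conditions i.--iv.\ guarantee that the configuration is planar and non-collinear with $x_5 \neq x_6$, so the relevant areas $\Delta_{56k}$ are nonzero and $P$ does not vanish.

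\textbf{The main obstacle} I anticipate is the bookkeeping in the subtraction step: tracking the precise signs of the oriented areas $\Delta_{ijk}$ under the reflection and confirming algebraically that the axial-mass contributions cancel while the $m_5, m_6$ contributions combine into a clean factor $(m_5 - m_6)$. Establishing $P \neq 0$ rigorously from the geometric conditions i.--iv.\ is where I would spend the most care, since this is what rules out accidental degeneracies that would break the implication. Once the factorization $(m_5 - m_6)P = 0$ with $P \neq 0$ is in hand, the conclusion $m_5 = m_6$ is immediate.
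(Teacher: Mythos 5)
Your strategy---extract the distance symmetries $r_{i5}=r_{i6}$ from the reflection and then combine a symmetric pair of Laura--Andoyer equations so that the axial-mass terms cancel and a factor $(m_5-m_6)$ survives---is viable and genuinely different from the paper's route. The paper instead works with the equations $L_{12}$ and $L_{13}$, indexed by two \emph{axial} bodies: there the collinearity of $x_1,\dots,x_4$ gives $\Delta_{123}=\Delta_{124}=\Delta_{134}=0$, so each single equation already collapses to $(m_5-m_6)s_{152}\Delta_{125}=0$ (resp.\ $(m_5-m_6)s_{153}\Delta_{135}=0$) with no subtraction of a mirror equation required, and the areas $\Delta_{125},\Delta_{135}$ are automatically nonzero because $x_5\notin s$.

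However, there is a genuine gap in your non-vanishing step. Combining $L_{15}$ with $L_{16}$ leaves a relation of the form $(m_5-m_6)\,s_{165}\,\Delta_{156}=0$, and the factor $P=s_{165}\Delta_{156}$ is \emph{not} controlled by the areas alone. First, $s_{165}=r_{16}^{-3}-r_{56}^{-3}$ vanishes whenever $r_{16}=r_{56}$, and nothing in conditions i.--iv.\ forbids the triangle $x_1x_5x_6$ from being equilateral-like with $r_{15}=r_{16}=r_{56}$; your justification never mentions this factor. Second, even the area can die: by \eqref{rDelta}, $\Delta_{156}=(r_{14}-1)r_{56}/2$, which vanishes exactly when $x_1=Q$ lies on $l$, and the case $x_{52}\geq x_{12}$ explicitly permits this. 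So the claim that ``the relevant areas $\Delta_{56k}$ are nonzero'' is false in general, and a single mirror pair does not suffice. The repair is to use all four pairs $L_{i5},L_{i6}$, $i=1,\dots,4$, obtaining $(m_5-m_6)s_{i65}\Delta_{i56}=0$ for each $i$, and then count degeneracies: the circle of radius $r_{56}$ centered at $x_6$ meets the line $s$ in at most two points, so at most two of the $s_{i65}$ vanish, while at most one $x_i$ can equal $Q$, so at most one $\Delta_{i56}$ vanishes; hence at least one product is nonzero and $m_5=m_6$ follows. This counting step is essentially the same circle-meets-line argument the paper deploys to exclude its own degenerate alternative $s_{152}=s_{153}=0$, which would force the three collinear points $x_1,x_2,x_3$ to be equidistant from $x_5$.
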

\begin{proof} From the collinearity of the bodies $x_1$, $x_2$, $x_3$ and $x_4$, we have   $\Delta_{123}=\Delta_{124}=\Delta_{134}=0$. The 
symmetry implies $r_{15}=r_{16}$, $\Delta_{125}=-\Delta_{126}$, and $\Delta_{135}=-\Delta_{136}$.  Replacing this relations in Laura-Andoyer equations $L_{12}$ and $L_{13}$ we obtain:
\begin{align*}
(m_{5}-m_6)s_{152}\Delta_{125}&=0,  \\ \nonumber
(m_{5}-m_6)s_{153}\Delta_{135}&=0.   \nonumber
 \end{align*}
By hypothesis $\Delta_{125} \neq 0$ and $\Delta_{135} \neq 0$, thus $m_5=m_6$ or $s_{152}=s_{153}=0.$ The last alternative implies $r_{15}=r_{25}=r_{35}$. This is impossible because the circumference of radius $r_{15}$ and center in the point $x_5$ intercepts the line $l$ defined by $x_{1}$ and $x_{2}$ in at least two distinct points. Hence $m_5=m_6.$ 
\end{proof}
%Using the symmetry we can obtain from the system $\ref{SLA}$ four nontrivial scalar Laura-Andoyer equation for a CC6BP :

 The symmetry conditions satisfied to a CC6BP $x\in \mathcal{X}$ and the proposition \ref{massc} implies that in this case there are only  four non-trivial Laura-Andoyer equations given by    
\begin{align}\label{4LA}
L_{15}=m_2s_{125}\Delta_{125}+m_3s_{135}\Delta_{135}+m_4s_{145}\Delta_{145}-m_5s_{165}\Delta_{156}=0;\\ \nonumber
L_{25}=m_1s_{215}\Delta_{125}-m_3s_{235}\Delta_{235}-m_4s_{245}\Delta_{245}+m_5s_{265}\Delta_{256}=0;\\ \nonumber
L_{35}=m_1s_{315}\Delta_{135}+m_2s_{325}\Delta_{235}-m_4s_{345}\Delta_{345}+m_5s_{365}\Delta_{356}=0;\\ \nonumber
L_{45}=m_1s_{415}\Delta_{145}+m_2s_{425}\Delta_{245}+m_3s_{435}\Delta_{345}+m_5s_{465}\Delta_{456}=0. \nonumber
\end{align}
If $x \in \mathcal{X}$, we have three cases for consider in terms of the relative position of $Q$ in relation to $x_1,x_2$, $x_3$ and $x_4$:
\begin{enumerate}
\item $x_{52}\geq x_{12}$;
\item $x_{12}>x_{52}\geq x_{22}$;
\item $x_{22}>x_{52} > x_{32}$.
\end{enumerate}
By the usual formula for the area of a triangle and the right-hand rule we  obtain, in all of these three cases above, that the quantities $\Delta_{ijk}$ and the mutual distances satisfies the following relations:
\begin{align}\label{rDelta}
\Delta_{125}&=-r_{12}\frac{r_{56}}{4}, &\qquad \Delta_{135}&=-r_{13}\frac{r_{56}}{4},\\ \nonumber
\Delta_{145}&=-r_{14}\frac{r_{56}}{4}, & \qquad \Delta_{156}&=(r_{14}-1)\frac{r_{56}}{2},\\ \nonumber
 \quad \Delta_{235}&=-r_{23}\frac{r_{56}}{4},&\qquad  \Delta_{245}&=-r_{24}\frac{r_{56}}{4},\\ \nonumber
\Delta_{256}&=-(1-r_{24})\frac{r_{56}}{2}, &\qquad \Delta_{345}&=-r_{34}\frac{r_{56}}{4},\\ \nonumber
\Delta_{356}&=-(1-r_{34})\frac{r_{56}}{2}, & \qquad\Delta_{456}&=-\frac{r_{56}}{2}.& & \nonumber
\end{align}
The relations \eqref{rDelta} allows us to eliminate the $\Delta_{ijk}$ variables of the Laura-Andoyer system. On dividing the resulting
system by $r_{56}\neq 0$ we get
\begin{align}\label{4LA}
L_{15}&=m_2s_{125}r_{12}+m_3s_{135}r_{13}+m_4s_{145}r_{14}+2m_5s_{165}(r_{14}-1)=0;\\ \nonumber
L_{25}&=-m_1s_{215}r_{12}+m_3s_{235}r_{23}+m_4s_{245}r_{24}-2m_5s_{265}(1-r_{24})=0;\\ \nonumber
L_{35}&=-m_1s_{315}r_{13}-m_2s_{325}r_{23}+m_4s_{345}r_{34}-2m_5s_{365}(1-r_{34})=0;\\ \nonumber
L_{45}&=-m_1s_{415}r_{14}-m_2s_{425}r_{24}-m_3s_{435}r_{34}-2m_5s_{465}=0.\\ \nonumber
\end{align}

\section{The Algebraic Variety of the CC6BP}\label{AVF}
In this section we  identify the CC6BP $x\in \mathcal{X}$ with points of a quasi-affine algebraic variety $V\subset \mathbb{C}^{32}$. We start introducing some terminology from basic algebraic geometry.

An \emph{affine algebraic variety} in the complex affine space $\mathbb{A}^n_{\mathbb{C}}$ is the common locus of a ideal $I\subset \mathbb{C}[x_1,...,x_n].$ We use the notation $V=Z(I)$. The topology on $\mathbb{A}^n_{\mathbb{C}}$  defined by the family of the complements of all algebraic varieties is called \emph{Zariski topology.} A quasi-affine algebraic variety $U$ is a relative open set of an algebraic variety. That is, $U=V\setminus W$, where $V$ and $W$ are algebraic varieties.

Consider the polynomial ring $A=\mathbb{C}[S_{ijk},R_{ij},M_i]$ in the following $32$ variables:
\begin{align*}
&S_{125},S_{135},S_{145},S_{165},S_{215},S_{235},S_{245},S_{265},\\
&S_{315},S_{325},S_{345},S_{365},S_{415},S_{425},S_{435},S_{465},\\
&R_{12},R_{13},R_{14},R_{15},R_{23},R_{24},R_{25},R_{34},R_{35},\\
&R_{45},R_{56},M_{1},M_{2},M_{3},M_{4},M_{5}
\end{align*}
 We will see $A$ as the ring of polynomial functions on $\mathbb{C}^{32}$. The points of $\mathbb{C}^{32}$ will be denoted by $P=(s_{ijk},r_{ij},m_i)$. The indexation of the entries of $P\in\mathbb{C}^{32}$ corresponds to the list of variables above. We use capital letters to refer  variables of a polynomial ring  and small letters to refer coordinates of a point to avoid ambiguities  in the notation. 
 
A point of $P=(s_{ijk},r_{ij},m_i)\in \mathbb{C}^{32}$ is associated to a  CC6BP $x\in \mathcal{X}$ with masses $m_1,m_2,m_3,m_4,m_5=m_6$ if the entries $r_{ij}$ consists of the mutual distances of $x$, $m_i$ are the masses of the bodies of $x$ and the quantities $s_{ikl}$ and $r_{ik}$ are related by the equations
\begin{equation}\label{sreq0}
s_{ijk}=r_{ij}^{-3}-r_{jk}^{-3}.
\end{equation}
In this case, we will say that \emph{P is a point of $\mathbb{C}^{32}$ associated to $x$}. Such points will be denoted by $P_{x}.$ Note that,   the mutual distances between the bodies of $x$ determine a unique point $P_x$ associated to $x$.

\begin{prop}\label{formulation} Every point $P_{x}\in\mathbb{C}^{32}$ associated to a CC6BP $x\in \mathcal{X}$ is in the quasi-affine algebraic set $\Omega= \widetilde{V}\setminus D$ where $\widetilde{V}=Z(I)$ is the zero locus of the ideal $I$  of $A$ generated by the following polynomials:
\begin{align*}
Z_{ij}&=R_{ij}^3R_{j5}^3S_{i5j}+R_{ij}^{3}-R_{j5}^{3},\quad i,j\in\{1,2,3,4\}, i\neq j,\\  \nonumber
W_{i}&=R_{i5}^3R_{56}^3S_{i65}+R_{i5}^{3}-R_{56}^{3},\quad i \in \{1,2,3,4\},\\  \nonumber
F_1&=R_{12}+R_{23}-R_{13},\\ \nonumber
F_2&=R_{12}+R_{24}-R_{14},\\ \nonumber
F_3&=R_{13}+R_{34}-R_{14},\\ \nonumber
F_4&=R_{23}+R_{34}-R_{24},\\ \nonumber
F_5&=4R_{15}^2-R_{56}^2-4(R_{14}-1)^2, \\ \nonumber
F_6&=4R_{25}^2-R_{56}^2-4(1-R_{24})^2,\\ \nonumber
F_7&=4R_{35}^2-R_{56}^2-4(1-R_{34})^2, \\ \nonumber
F_8&=4R_{45}^2-R_{56}^2-4,\\ \nonumber
L_{1}&=M_2S_{125}R_{12}+M_3S_{135}R_{13}+M_4S_{145}R_{14}+2M_5S_{165}(R_{14}-1),\\ \nonumber
L_{2}&=-M_1S_{215}R_{12}+M_3S_{235}R_{23}+M_4S_{245}R_{24}-2M_5S_{265}(1-R_{24}),\\ \nonumber
L_{3}&=-M_1S_{315}R_{13}-M_2S_{325}R_{23}+M_4S_{345}R_{34}-2M_5S_{365}(1-R_{34}),\\ \nonumber
L_{4}&=-M_1S_{415}R_{14}-M_2S_{425}R_{24}-M_3S_{435}R_{34}-2M_5S_{465},\\  \nonumber
\end{align*}
and  $D=\mathcal{Z}(R_{12},R_{13},R_{14},R_{15},R_{23},R_{24},R_{25},R_{34},R_{35},R_{45},R_{56}).$
\end{prop}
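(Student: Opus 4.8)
The plan is to verify, directly and family-by-family, that the coordinates of an associated point $P_x=(s_{ijk},r_{ij},m_i)$ of a CC6BP $x\in\mathcal{X}$ annihilate every generator of $I$ and that $P_x$ avoids $D$; together these give $P_x\in\Omega=\widetilde V\setminus D$. Since each generator is the polynomial shadow of a relation already derived, I would organize the check around the four blocks $\{F_k\}$, $\{L_k\}$, $\{Z_{ij}\}$, $\{W_i\}$, and finish with the open condition defining $D$.

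First I would dispatch the two blocks that require no new work. The $F_1,\dots,F_8$ are exactly the shape equations \eqref{SH} with each $r_{ij}$ renamed $R_{ij}$, and these hold for any CC6BP by Pythagoras on the triangles $x_iQx_5$ together with the collinearity of $x_1,x_2,x_3,x_4$; hence $F_k(P_x)=0$. Similarly $L_1,\dots,L_4$ are the reduced Laura--Andoyer equations \eqref{4LA}: Proposition \ref{massc} forces $m_5=m_6$, so that all Laura--Andoyer equations except $L_{15},L_{25},L_{35},L_{45}$ vanish identically, and the area identities \eqref{rDelta} (valid in each of the three cases for the location of $Q$) let one replace the $\Delta_{ijk}$ by mutual distances and cancel the common factor $r_{56}\ne 0$. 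So $L_k(P_x)=0$.

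Next I would handle the blocks $Z_{ij}$ and $W_i$, which merely clear denominators in the defining relations \eqref{sreq0}. For $Z_{ij}$ with $i,j\in\{1,2,3,4\}$, the relevant $S$-coordinate equals $r_{ij}^{-3}-r_{j5}^{-3}$ by \eqref{sreq0}; multiplying by $r_{ij}^{3}r_{j5}^{3}$ and collecting terms gives precisely $Z_{ij}(P_x)=0$. For $W_i$ I would use the reflection symmetry across $s$: because $x_1,\dots,x_4\in s$ and $x_6$ is the mirror image of $x_5$, one has $r_{i6}=r_{i5}$, so $s_{i65}=r_{i6}^{-3}-r_{56}^{-3}=r_{i5}^{-3}-r_{56}^{-3}$, and clearing denominators by $r_{i5}^{3}r_{56}^{3}$ yields $W_i(P_x)=0$. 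All these manipulations are legitimate because every mutual distance is strictly positive; that same positivity immediately gives $P_x\notin D$, since $D$ is cut out by the simultaneous vanishing of all the $R_{ij}$.

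I expect the only delicate point to be the index bookkeeping in the $Z_{ij}$ and $W_i$ blocks---matching each $S$-variable to the correct difference of reciprocal cubes in \eqref{sreq0}, and, for $W_i$, invoking the symmetry $r_{i6}=r_{i5}$ that also explains why no ``body-$6$'' distances occur among the $32$ variables. There is no genuinely hard step here: the proposition is a faithful translation of the geometry and dynamics of $\mathcal{X}$ into the vanishing of explicit polynomials, so once each generator is identified with its antecedent equation the verification is immediate.
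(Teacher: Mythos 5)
Your proposal is correct and follows the same route as the paper's own proof: the paper likewise observes that the $Z_{ij}$ and $W_i$ come from clearing denominators in \eqref{sreq0}, that the $F_k$ and $L_k$ are the shape equations \eqref{SH} and reduced Laura--Andoyer equations \eqref{4LA}, and that $P_x\notin D$ because the mutual distances are nonzero. Your version merely spells out the block-by-block verification (including the symmetry $r_{i6}=r_{i5}$) that the paper leaves implicit.
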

\begin{proof}
After clearing the denominators of the equations $\eqref{sreq0}$ we obtain that the equations
\begin{equation}\label{Sreq}
r_{ij}^3r_{jk}^3s_{ijk}+r_{ij}^{3}-r_{jk}^{3}=0
\end{equation}
are satisfied by the quantities $s_{ijk}$ and $r_{ij}$ associated to a CC6BP $x$. By the equations $\eqref{SH}$, $\eqref{4LA}$ and $\eqref{Sreq}$ we get that every point $P_x$ associated to a configuration $x\in \mathcal{X}$ belongs to $\widetilde{V}$. In the other hand, since the every mutual distance between bodies of $x$ is nonzero, $P_x \not \in D.$ The result is proved.
\end{proof}

\section{An example of CC6BP}\label{anexample}
In this section, we obtain a particular example of CC6BP that will be very important in order to prove our generic finiteness result.  The basic idea consists in finding the possible values for the mutual distance $r_{12}$. From this we find the other mutual distances correspondent to a cross central configuration. The main tool here is the extension theorem that we present in the following (See \cite[Ch.III]{CLO} for more details). 

Let  $I = \langle f_1,...,f_s \rangle \subset \mathbb{C}[X_1, . . . , X_n]$ be an ideal. The $l$-th elimination ideal $I_l$ is
the ideal of $\mathbb{C}[X_{l+1},...,X_n]$ defined by $I_{l} = I \cap \mathbb{C}[X_{l+1},...,X_n]$. A partial solution of the polynomial system $V(I)$ is a solution $(a_{l+1},...,a_{n})\in V(I_{l})$. If $(a_1,...,a_n)$ is a solution of $V(I)$ then obviously $(a_{l+1},...,a_{n})\in V(I_{l})$. The converse is not necessarily true. The next describes precisely when a partial solution can be extended to a complete solution.

\begin{theorem}[Extension Theorem]\label{ET}
Let $I =  \langle f_1,...,f_s \rangle \subset \mathbb{C}[X_1,...,X_n]$ be an ideal and
let $I_1$ be the first elimination ideal of $I$. For each $1 \leq i \leq s$, write $f_i$ in the form
$f_i = g_i(X_2,...,X_n)X_{1}^{N_i}$
 + terms in which $x_1$ has degree $< N_i$, $N_i \geq 0$ and $g_i \in \mathbb{C}[X_2,...,X_n]$ is nonzero. Suppose that we have a partial solution
$(a_2,...,a_n) \in V(I_1)$. If $(a_2,...,a_n)\not \in V(g_1,...,g_s)$, then there exists $a_1 \in \mathbb{C}$
such that $(a_1, a_2,...,a_n) \in V(I)$.
\end{theorem}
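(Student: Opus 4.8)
The plan is to prove the Extension Theorem through the theory of resultants, reducing the existence of a value $a_1$ completing the partial solution to the existence of a common root in $X_1$ of the specialized polynomials $f_i(X_1, a_2, \ldots, a_n)$, using crucially that $\mathbb{C}$ is algebraically closed. First I would recall the two fundamental facts about the resultant $\operatorname{Res}(f, g, X_1)$ of two polynomials of positive degree in $X_1$, both of which follow from the Sylvester determinant definition: (i) there exist $A, B \in \mathbb{C}[X_1, \ldots, X_n]$ with $A f + B g = \operatorname{Res}(f,g,X_1)$, and since $\operatorname{Res}(f,g,X_1) \in \mathbb{C}[X_2, \ldots, X_n]$ while $f, g \in I$, this forces $\operatorname{Res}(f,g,X_1) \in I \cap \mathbb{C}[X_2,\ldots,X_n] = I_1$; and (ii) the Poisson product formula: if $f$ has leading coefficient $c$ in $X_1$ and splits as $f = c \prod_j (X_1 - \beta_j)$, then $\operatorname{Res}(f, g, X_1) = c^{\deg_{X_1} g} \prod_j g(\beta_j)$.

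Second, I would dispose of the case of two generators. Given the partial solution $a = (a_2, \ldots, a_n) \in V(I_1)$ with, say, $g_1(a) \neq 0$, the polynomial $f_1(X_1, a)$ has degree exactly $N_1 \geq 1$. Since $\operatorname{Res}(f_1, f_2, X_1) \in I_1$ by (i), it vanishes at $a$; because the leading coefficient $g_1(a)$ does not vanish, formula (ii) forces $f_1(X_1, a)$ and $f_2(X_1, a)$ to share a root, which is the desired $a_1$.

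Third, and this is where the real work lies, I would extend to arbitrary $s$. The obstacle is that applying (ii) to each pair $\operatorname{Res}(f_1, f_i, X_1)$ only yields a common root of $f_1(X_1,a)$ and $f_i(X_1,a)$ for each $i$ \emph{separately}, and these roots need not coincide, so pairwise resultants do not suffice. To force a single simultaneous root I would introduce new indeterminates $u_2, \ldots, u_s$, set $\tilde f = u_2 f_2 + \cdots + u_s f_s$, and regard $\operatorname{Res}(f_1, \tilde f, X_1)$ as a polynomial in the $u_i$ with coefficients in $\mathbb{C}[X_2, \ldots, X_n]$. A short argument using (i) shows each such coefficient lies in $I_1$, hence vanishes at $a$, so $\operatorname{Res}(f_1, \tilde f, X_1)$ vanishes identically in $u$ after specializing at $a$. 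Writing $f_1(X_1, a) = g_1(a)\prod_j (X_1 - \alpha_j)$, which is legitimate since $g_1(a) \neq 0$, and applying (ii) gives $\operatorname{Res}(f_1, \tilde f, X_1)(a) = g_1(a)^{\deg \tilde f} \prod_j \bigl(\sum_{i \geq 2} u_i f_i(\alpha_j, a)\bigr)$. As this product of linear forms in $u$ is the zero polynomial and $\mathbb{C}[u_2,\ldots,u_s]$ is an integral domain, one factor must vanish identically, that is $f_i(\alpha_j, a) = 0$ for all $i \geq 2$ and some fixed $j$. Since $\alpha_j$ is already a root of $f_1(X_1,a)$, the value $a_1 = \alpha_j$ simultaneously annihilates every $f_i(X_1, a)$ and completes the solution.

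Finally, I would clean up two bookkeeping points. Any $f_i$ of degree $0$ in $X_1$ equals $g_i \in \mathbb{C}[X_2, \ldots, X_n]$, hence lies in $I_1$ and already vanishes at $a$, so it imposes no condition on $X_1$; this same observation shows that any index with $g_i(a) \neq 0$ must have $N_i \geq 1$, justifying the relabeling that lets us take $f_1$ with $g_1(a)\neq 0$ and $N_1\geq 1$ above. The genuinely hard step is the multi-generator reduction via the generic combination $\tilde f$, together with the verification that the $u$-coefficients of $\operatorname{Res}(f_1,\tilde f, X_1)$ land in the elimination ideal $I_1$; everything else is standard resultant calculus.
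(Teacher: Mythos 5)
The paper does not prove this statement: it is quoted verbatim as a classical result with a pointer to Cox--Little--O'Shea, Ch.~III, so there is no in-paper argument to compare against. Your proposal is, in outline, exactly the resultant-based proof given in that reference: the two facts (i) $\operatorname{Res}(f,g,X_1)\in\langle f,g\rangle\cap\mathbb{C}[X_2,\dots,X_n]\subset I_1$ and (ii) the Poisson product formula, the reduction of the multi-generator case to the two-generator case via the generic combination $\tilde f=u_2f_2+\cdots+u_sf_s$, and the factorization of a vanishing product of linear forms in the $u_i$ over the integral domain $\mathbb{C}[u_2,\dots,u_s]$ are all the standard ingredients, and your handling of the degree-zero generators and of the relabeling so that $g_1(a)\neq0$ forces $N_1\geq1$ is correct. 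The one step you elide is the justification of the identity $\operatorname{Res}(f_1,\tilde f,X_1)\big|_{X=a}=g_1(a)^{\deg_{X_1}\tilde f}\prod_j\tilde f(\alpha_j,a)$: specialization at $a$ need not commute with forming the resultant when the $X_1$-degree of $\tilde f$ drops under specialization (which happens precisely when every $g_i(a)$ with $N_i$ maximal among $i\geq2$ vanishes), and one needs the standard lemma that, because the leading coefficient $g_1(a)$ of the \emph{first} argument survives, the specialized resultant equals $g_1(a)^{d-d'}$ times the resultant of the specialized polynomials, where $d-d'$ is the degree drop. This correction factor is a nonzero constant, so your conclusion that the product of linear forms vanishes identically in $u$ is unaffected; the gap is a bookkeeping one, not a structural one, and with that lemma supplied the proof is complete.
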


Now we pass to find our desired example.

\begin{prop}\label{importantexample}
 Assume $m_1=m_2=m_3=m_4=1$ and  $m_5=m_6$. Then,  there is a unique $x$ is a CC6BP satisfying the following conditions:
\begin{enumerate}
\item $x_1,x_2,x_3,x_4$ are collinear;
\item The polygon with vertices $x_2$, $x_3$, $x_5$ and $x_6$ is a square  centered in the origin of the coordinate system, such that $x_2,x_3$ are in the $y$-axis and $x_5,x_6$ are in the $x$-axis.
\item $r_{12}=r_{34}$ and $r_{14}=2$. 
\end{enumerate} 
  
\end{prop}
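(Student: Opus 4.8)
The plan is to show that conditions (1)–(3), together with the normalization i–iv, pin the whole configuration down to a single geometric parameter and a single mass, so that the central-configuration equations collapse to a one-variable elimination problem that I then settle with the Extension Theorem \ref{ET}.

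\textbf{Step 1 (coordinate reduction).} First I would use (2) and (3) to fix coordinates. Since the square is centered at the origin with $x_2,x_3$ on the $y$-axis and $x_5,x_6$ on the $x$-axis, all four vertices lie at distance $a$ from the origin, and iv fixes the signs $x_{51}<0<x_{61}$; condition iii forces $x_{42}=-1$ and $r_{14}=2$ then forces $x_{12}=1$. Thus, after the normalization,
\[
x_1=(0,1),\ x_2=(0,a),\ x_3=(0,-a),
\]
\[
x_4=(0,-1),\ x_5=(-a,0),\ x_6=(a,0),
\]
with $0<a<1$. I would check that this placement satisfies i–iv, keeps the center of mass at the origin for every value of $a$ and of the common mass $m:=m_5=m_6$, and yields $r_{12}=r_{34}=1-a$ automatically, so (3) is consistent. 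All mutual distances then become explicit functions of $a$, e.g. $r_{13}=r_{24}=1+a$, $r_{23}=r_{56}=2a$, $r_{25}=r_{35}=a\sqrt2$, $r_{15}=r_{45}=\sqrt{a^2+1}$, and a direct substitution shows the shape equations \eqref{SH} hold identically in $a$. Hence $x$ is a CC6BP precisely when the Laura--Andoyer equations \eqref{4LA} are satisfied.

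\textbf{Step 2 (symmetry reduction and elimination).} The reflection across the $x$-axis exchanges the indices $1\leftrightarrow 4$ and $2\leftrightarrow 3$ and fixes $5,6$; since $m_1=m_4$ and $m_2=m_3$, it identifies $L_{15}=0$ with $L_{45}=0$ and $L_{25}=0$ with $L_{35}=0$, leaving only the two equations $L_{15}=0$ and $L_{25}=0$ in the two unknowns $a$ and $m$. Each of these is affine-linear in $m$, say $A_i(a)\,m+B_i(a)=0$, so eliminating $m$ gives the single scalar condition $A_1B_2-A_2B_1=0$ in $a$. To make this algebraic I would pass to the ideal $I$ of Proposition \ref{formulation} specialized by $M_1=\dots=M_4=1$ and by the extra relations encoding the data of the proposition (the square: $R_{23}=R_{56}$, $R_{23}^2=2R_{25}^2$; together with $R_{12}=R_{34}$ and $R_{14}=2$), the square roots being absorbed by the variables $R_{ij}$ and the defining relations \eqref{Sreq}. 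Eliminating every variable except $R_{12}$ (computed via a Groebner basis) lands a nonzero univariate polynomial $p$ in the corresponding elimination ideal, whose admissible roots are the candidate values of $r_{12}=1-a$.

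\textbf{Step 3 (root analysis, extension, and the main obstacle).} I would then show that $p$ has exactly one root $r_{12}^\ast\in(0,1)$ for which the mass $m$ recovered from either linear equation is real and positive; this pins down the mutual-distance data uniquely. Finally the Extension Theorem \ref{ET} lifts this admissible partial solution to a genuine point of the specialized variety — hence to an actual cross central configuration — once one verifies that the relevant leading coefficients $g_i$ do not vanish at $r_{12}^\ast$. The main obstacle is precisely this last analytic/computational part: the distances $r_{15},r_{25}$ are irrational in $a$, so the elimination must be carried out with care to produce the correct univariate $p$ and to confirm the nonvanishing hypothesis of Theorem \ref{ET}; and the real work lies in establishing \emph{uniqueness} rather than mere existence, i.e. that among the roots of $p$ exactly one lies in the admissible interval and gives a positive mass. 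I expect this to be handled by the symbolic computation (SageMath/Singular) and a sign/monotonicity argument on $p$ over $(0,1)$.
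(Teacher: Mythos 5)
Your proposal is correct and follows essentially the same route as the paper: reduce the configuration to the single parameter $r_{12}$ (your $a=1-r_{12}$ gives exactly the paper's distance relations), keep $R_{15},R_{25}$ as variables subject to the quadratic shape relations, eliminate down to a univariate polynomial in $R_{12}$ via Groebner bases, show it has a unique admissible root by a Sturm/sign argument, and lift with the Extension Theorem after checking the leading coefficients (linear in $M_5$) do not vanish at the partial solution. Your symmetry observation reducing the four Laura--Andoyer equations to two is a cleaner justification of a reduction the paper performs implicitly, and you correctly identify the deferred numerical verifications (unique root in $(0,1)$, nonvanishing leading coefficients, positivity of $m_5$) as the substantive remaining work, which the paper carries out with Sturm's theorem and mean-value-theorem error bounds.
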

\begin{proof}  By hypotheses the values for the mutual distances of $x$ are:
\begin{align*}
r_{13}&=2-r_{12},&r_{14}&=2,& r_{23}&=2-2r_{12},\\ 
r_{24}&=2-r_{12}, &r_{34}&=r_{12}, &r_{35}&=r_{25},\\
r_{45}&=r_{15}, &r_{56}&=2-2r_{12}. 
\end{align*}
Substitute this relations and the values $m_2=m_3=m_5=m_6=1$ on the polynomial equations described on proposition $\ref{formulation}$. Next eliminate the variables $S_{ijk}$ from the resulting system by using the rational equations $S_{ijk}=R_{ij}^{-3}-R_{jk}^{-3}$. Following this two steps we have the following system:
\begin{align*}
R_{25}^2 - 2(1-R_{12})^2=0;\\
R_{15}^2 -1-(1-R_{12})^2=0;\\
\frac{1}{4} \, M_{5} {\left(\frac{1}{{\left(R_{12} - 1\right)}^{3}} + \frac{8}{R_{15}^{3}}\right)} + {\left(R_{12} - 2\right)} {\left(\frac{1}{{\left(R_{12} - 2\right)}^{3}} + \frac{1}{R_{25}^{3}}\right)} \\ \nonumber  
+ R_{12} {\left(\frac{1}{R_{12}^{3}} - \frac{1}{R_{25}^{3}}\right)} - \frac{2}{R_{15}^{3}} + \frac{1}{4}=0;\\ \nonumber
-\frac{1}{4} \, M_{5} {\left(R_{12} - 1\right)} {\left(\frac{1}{{\left(R_{12} - 1\right)}^{3}} + \frac{8}{R_{25}^{3}}\right)} + \frac{1}{4} \, {\left(R_{12} - 1\right)} {\left(\frac{1}{{\left(R_{12} - 1\right)}^{3}} + \frac{8}{R_{25}^{3}}\right)}\\ \nonumber
 + {\left(R_{12} - 2\right)} {\left(\frac{1}{{\left(R_{12} - 2\right)}^{3}} + \frac{1}{R_{15}^{3}}\right)} - R_{12} {\left(\frac{1}{R_{12}^{3}} - \frac{1}{R_{15}^{3}}\right)}=0. \nonumber
\end{align*}
After clearing the denominators of the last two polynomial equations in the system above we obtain the ideal $I\subset\mathbb{C}[M_5,R_{15},R_{25},R_{12}]$ generated by:
\begin{align*}\
&F_1=R_{25}^2 - 2(1-R_{12})^2;\\
&F_2=R_{15}^2 -1-(1-R_{12})^2;\\
&F_{3}=R_{12}^{7} R_{15}^{3} R_{25}^{3} - 7 \, R_{12}^{6} R_{15}^{3} R_{25}^{3} + 8 \, M_{5} R_{12}^{7} R_{25}^{3} + M_{5} R_{12}^{4} R_{15}^{3} R_{25}^{3}\\
& + 27 \, R_{12}^{5} R_{15}^{3} R_{25}^{3} - 8 \, R_{12}^{7} R_{15}^{3}
 - 56 \, M_{5} R_{12}^{6} R_{25}^{3} - 8 \, R_{12}^{7} R_{25}^{3}\\ 
 &- 4 \, M_{5} R_{12}^{3} R_{15}^{3} R_{25}^{3} - 65 \, R_{12}^{4} R_{15}^{3} R_{25}^{3} + 56 \, R_{12}^{6} R_{15}^{3} + 152 \, M_{5} R_{12}^{5} R_{25}^{3}\\
& + 56 \, R_{12}^{6} R_{25}^{3} + 4 \, M_{5} R_{12}^{2} R_{15}^{3} R_{25}^{3} + 104 \, R_{12}^{3} R_{15}^{3} R_{25}^{3} - 152 \, R_{12}^{5} R_{15}^{3}\\
& - 200 \, M_{5} R_{12}^{4} R_{25}^{3} - 152 \, R_{12}^{5} R_{25}^{3} - 108 \, R_{12}^{2} R_{15}^{3} R_{25}^{3} + 200 \, R_{12}^{4} R_{15}^{3}\\ 
&+ 128 \, M_{5} R_{12}^{3} R_{25}^{3} + 200 \, R_{12}^{4} R_{25}^{3} + 64 \, R_{12} R_{15}^{3} R_{25}^{3} - 128 \, R_{12}^{3} R_{15}^{3}\\
& - 32 \, M_{5} R_{12}^{2} R_{25}^{3} - 128 \, R_{12}^{3} R_{25}^{3} - 16 \, R_{15}^{3} R_{25}^{3} + 32 \, R_{12}^{2} R_{15}^{3} + 32 \, R_{12}^{2} R_{25}^{3};\\ 
&F_{4}= -8 \, M_{5} R_{12}^{7} R_{15}^{3} - M_{5} R_{12}^{4} R_{15}^{3} R_{25}^{3} + 56 \, M_{5} R_{12}^{6} R_{15}^{3} + 8 \, R_{12}^{7} R_{15}^{3}\\
& + 8 \, R_{12}^{7} R_{25}^{3} + 4 \, M_{5} R_{12}^{3} R_{15}^{3} R_{25}^{3} + R_{12}^{4} R_{15}^{3} R_{25}^{3} - 152 \, M_{5} R_{12}^{5} R_{15}^{3}+ R_{12}^{6} R_{15}^{3}\\
& - 56 \, - 56 \, R_{12}^{6} R_{25}^{3} - 4 \, M_{5} R_{12}^{2} R_{15}^{3} R_{25}^{3} + 12 \, R_{12}^{3} R_{15}^{3} R_{25}^{3} + 200 \, M_{5} R_{12}^{4} R_{15}^{3}\\
& + 152 \, R_{12}^{5} R_{15}^{3} + 152 \, R_{12}^{5} R_{25}^{3} - 44 \, R_{12}^{2} R_{15}^{3} R_{25}^{3} - 128 \, M_{5} R_{12}^{3} R_{15}^{3}\\
& - 200 \, R_{12}^{4} R_{15}^{3} - 200 \, R_{12}^{4} R_{25}^{3} + 48 \, R_{12} R_{15}^{3} R_{25}^{3} + 32 \, M_{5} R_{12}^{2} R_{15}^{3}\\
& + 128 \, R_{12}^{3} R_{15}^{3} + 128 \, R_{12}^{3} R_{25}^{3} - 16 \, R_{15}^{3} R_{25}^{3} - 32 \, R_{12}^{2} R_{15}^{3} - 32 \, R_{12}^{2} R_{25}^{3}.
\end{align*}
If $x$ is a CC6BP satisfying the conditions (1), (2) and (3) the correspondent point $(m_5,r_{12},r_{15},r_{25})$ are in the zero locus of $I$ in $\mathbb{C}^4.$ Note that in this case $0<r_{12}<1$. We will conclude the argument by using the extension theorem. We start obtaining the elimination ideal $I_{3}=I \cap \mathbb{C}[r_{12}].$ Making computations with the software Singular we obtain that $I_3$ is generated by an unique polynomial $g(R_{12})=(R_{12}-1)^4h(R_{12})$, for which $1$ is not root of 
\begin{align*}
&h(R_{12})=49R_{12}^{52}-2548R_{12}^{51}+66738R_{12}^{50} + \text{terms of lower degree}
\end{align*} 
Using the Sturm theorem it is possible to check that $h$ has only one root $\tilde{r}_{12}$ in the interval $(0,1]$. More precisely,
we can use the Sturm theorem again to ensure $\tilde{r}_{12}\in (0.4402418528, 0.4402418529]$.
Consider the elimination ideals $I_2=I \cap \mathbb{C}[R_{25},R_{12}]$ ans $I_{1}=\mathbb{C}[R_{15},R_{25},R_{12}]$.
Since  $R_{25}^2 - 2(1-R_{12})^2\in I_{2}$ and $R_{15}^2 -1-(1-R_{12})^2 \in I_1$, by the extension theorem 
$$P_{0}=\left(\sqrt{1+(1-\tilde{r}_{12})^2},\sqrt{2(1-\tilde{r}_{12})^2},\tilde{r}_{12}\right) \in Z(I_1)$$
is a partial solution. Finally, in order to extend a partial solution $P_0$ to a solution, we must to proof that there is $m_5\in \mathbb{R^{+}}$ such that 
$$\left(m_5,\sqrt{1+(1-\tilde{r}_{12})^2},\sqrt{2(1-\tilde{r}_{12})^2},\tilde{r}_{12}\right) \in Z(I).$$
By the extension theorem it is sufficient that the leading coefficients of $F_3$ and $F_4$ written as 
polynomials of $\mathbb{C}[R_{15},R_{25},R_{12}][M_{5}]$ does not vanish simultaneously at the partial solution $P_{0}$. Such leading terms are given by:
\begin{align*}
LC_1=&8R_{12}^7R_{25}^3-56R_{12}^6R_{25}^3+152R_{12}^5R_{25}^3+R_{12}^4R_{15}^3R_{25}^3\\
&-200R_{12}^4R_{25}^3-4R_{12}^3R_{15}^3R_{25}^3+128R_{12}^3R_{25}^3\\
&+4R_{12}^2R_{15}^3R_{25}^3-32R_{12}^2R_{25}^3;
\end{align*}
\begin{align*}
LC_2=&-8R_{12}^7R_{15}^3+56R_{12}^6R_{15}^3-152R_{12}^5R_{15}^3-R_{12}^4R_{15}^3R_{25}^3\\
&+200R_{12}^4R_{15}^3+4R_{12}^3R_{15}^3R_{25}^3-128R_{12}^3R_{15}^3\\
&-4R_{12}^2R_{15}^3R_{25}^3+32R_{12}^2R_{15}^3.
\end{align*}

Let $$P_t=\left(\sqrt{1+(1-t_{12})^2},\sqrt{2(1-t_{12})^2},t_{12}\right)$$
be the truncated partial solution. First we will evaluate the leading terms $LC_1$ and $LC_2$ at $P_t$.

 Using the function roots() of the software SageMath we obtain the truncated value $t_{12}=0.440241852870668$ for  the solution $\tilde{r}_{12}$ with precision of $10$ decimal cases.  A direct computation shows $LC_1(P_t)=0.0238525134676166$ and $LC_2(P_t)=0.643697010003912.$ 

Next we use the truncated values $LC_1(P_t)$ and $LC_2(P_t)$ for estimate  $LC_1(P_0)$ and $LC_2(P_0)$ using elementary calculus.
Consider the differentiable functions on $R_{12}$:
   
$$\alpha_i(R_{12})=LC_1\left(\sqrt{1+(1-R_{12})^2},\sqrt{2(1-R_{12})^2},R_{12}\right).$$

 The middle value theorem implies that there are constants $c_i$ in the interval defined by $t_{12}$ and $\tilde{r}_{12}$ such that
$$|\alpha_i(\tilde{r}_{12})-\alpha_i(t_{12})|=|\alpha^\prime_{i}(c_i)|\tilde{r}_{12}-t_{12}|.$$
 Sturm Theorem ensure that $|t_{12}-\overline{r}_{12}|<10^{-10}$. Combining this inequality, with the triangular rule and the estimate $c_i<1/2$, we can assert that
 $$|LC_i(P_0)-LC_i(P_t)|=|\alpha_i(\tilde{r}_{12})-\alpha_i(t_{12})|<10^{-5}$$ 
  for $i=1,2$. In particular,  $LC_{1}(P_0)\neq 0$ and $LC_2(P_0)\neq 0$. 
  Since the polynomials $F_3$ and $F_4$ are linear on $M_5$, there exists unique solution $$(m_5,r_{15},r_{25},r_{12})\in Z(I)$$ that provides a CC6BP.  The truncated value for $m_5$ is $4.76482836$. The result is proved.
\end{proof}

\begin{figure}[h!]
\includegraphics[scale=0.6]{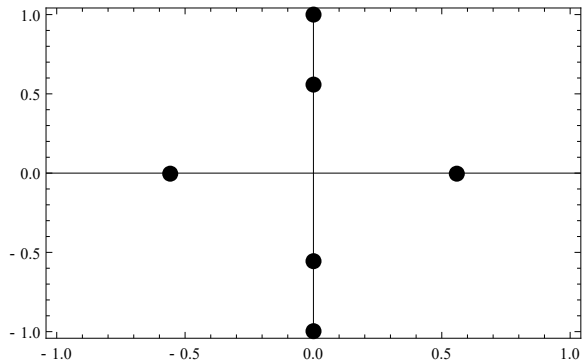}
\caption{The example of CC6BP founded in proposition \ref{importantexample}}
\end{figure}
\section{The Jacobian Criterion and Groebner basis theory} \label{ARG}

To obtain our finiteness result, we need to compute the dimension of the quasi-affine algebraic set $V$ defined in the proposition \ref{formulation}.  We will use the  Jacobian criterion and a computational procedure based in the Hilbert polynomial and Groebner basis computations. For completeness,  we introduce some important definitions used in our approach.

\subsection{Complex Algebraic Geometry}

In this part, we present basic definitions and results about algebraic varieties necessary to enunciate the Jacobian criterion and the dimension of fibers theorem. Our references here are \cite[Ch.I, Ch.II.1]{SH} and \cite[Ch.VI]{SKKT}.

Consider an affine algebraic variety $V$. A \emph{subvariety} of $V$ is a set of the form $W\cap V$ for which $W$ is an algebraic variety. If $V$ cannot be written as the nontrivial union of two subvarieties, it is  said to be \emph{irreducible.} We have that $V$ is irreducible if and only if the ideal of V, $\mathcal{I}(V)$ is primary.  It is true that every algebraic variety $V$ can be written as a finite union of irreducible subvarieties. In other words, there exist subvarieties $V_1,...,V_k$ of $V$ such that $V=V_1\cup...\cup V_k$. This fact will be important in our argument.  

 Given a subset $L\subset \mathbb{C}^n$ we define $\mathcal{I}(L)$ by the ideal of the polynomials vanishing on all elements of $L.$ It is easy to prove that  $Z(\mathcal{I}(L))=\overline{L}$ and $\mathcal{I}(Z(I))=\sqrt{I}$.  For every non-empty open subset $U$  of a irreducible variety $V$ we have that $U$ is dense in $V$ and $\mathcal{I}(U)=\mathcal{I}(V)$. We  say that a property is \emph{generically} on an irreducible algebraic  variety $V$, if it holds on a nonempty open subset.

The \emph{dimension} $\text{dim}(V)$ of a affine variety $V$ is defined to be the length $d$ of the longest
possible chain of proper irreducible subvarieties of $V$,
$$V=V_{d}\supsetneqq V_{d-1}\supsetneqq ... \supsetneqq V_1 \supsetneqq V_{0}.$$
It is easy to see that the dimension of $V$ is the maximum of the dimension of its irreducible components. For our purposes, the two most important facts about the dimension of an algebraic variety are:

\begin{itemize}
\item $\text{dim}(V)=0$ if, and only if, $V$ is finite.
\item If $V$ is irreducible and $U$ is non-empty subset of $V$ then $\text{dim}(V)=\text{dim}(U)$.
\end{itemize}

%Our approach consists in a generic choice of mass $$ to define an algebraic variety given by the fibers of a projection map that in a certain sense contains all cross central configurations of the six body problem and prove that his dimension is zero.

The \emph{tangent space} of $V=Z(F_1,...,F_{k})$ at $P$ is the linear variety 
$$\Theta_{P}V=Z(dF_{1}|_{P}(X-P),...,dF_{k}|_{P}(X-P))\subset \mathbb{A}^n_{\mathbb{C}}.$$

 The tangent space it is a local notion. More precisely, if $U$ is a non-empty open set of an affine algebraic variety $V$ and $P\in U$, then $\Theta_{P}V$ is isomorphic to $\Theta_{P}U$ as $\mathbb{C}$-vector spaces. Differently from differential manifolds, in an algebraic variety the dimension of the tangent space depends on the choice of a particular point $P \in V.$ For this reason, we need a punctual notion of dimension:

The dimension of $V$ at a point $P$ is given by 
$$\text{dim}_{P}(V)=\text{max}\{\text{dim}_{P}(V_{i}): V_{i} \text{ is a irreducible component of V}\}.$$ 

A point $P\in V$ is said to be \emph{nonsingular} if $\text{dim}_{P}(V)=\text{dim}(\Theta_{P}V).$

We need to relate the dimension of a variety $V$ with the dimension of the tangent space of $V$ at a point $P$. The next theorem will be useful for this task.

\begin{theorem}[Jacobian Criterion]\label{cj}
Let $V=Z(\langle f_{1},...,f_{m}\rangle)$ be a affine algebraic variety. A point $P\in V$ is a non-singular  point of $V$, if and only if, the rank of the Jacobian matrix $J(f_{1},...,f_{m})(P)$  at the point $P$ is given by $n-dim_{P}V$, for which,
$$J(f_{1},...,f_{m})(P)= \left(\begin{array}{ccc}
                    \frac{\partial{f_1}}{\partial x_1}(P) &\cdots& \frac{\partial{f_1}}{\partial x_n}(P)\\
                    \vdots                             & \vdots & \vdots\\
                     \frac{\partial{f_m}}{\partial x_n}(P)&  \cdots& \frac{\partial{f_m}}{\partial x_n}(P)\\
                    \end{array}\right).$$
Moreover, for every point $P \in V$ we have $\text{dim}_{P}(V)\leq \text{dim}(\Theta_{P}V).$  
\end{theorem}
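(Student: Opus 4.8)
The plan is to split the statement into a purely linear-algebraic identity, from which the stated equivalence follows tautologically, and a single substantive inequality, which carries all the geometric content.

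First I would record the identity $\dim(\Theta_P V) = n - \operatorname{rank} J(f_1,\dots,f_m)(P)$. By definition $\Theta_P V$ is the common zero locus in $\mathbb{A}^n_{\mathbb C}$ of the $m$ affine-linear forms $dF_i|_P(X-P) = \sum_{j=1}^n \frac{\partial f_i}{\partial x_j}(P)\,(X_j - P_j)$, for $i=1,\dots,m$. The coefficient matrix of this linear system is precisely $J(f_1,\dots,f_m)(P)$, so $\Theta_P V$ is a translate of its kernel and therefore has dimension $n - \operatorname{rank} J(P)$. (Note that since $f_i(P)=0$, for any $g=\sum h_i f_i \in \langle f_1,\dots,f_m\rangle$ one has $dg|_P = \sum h_i(P)\,df_i|_P$, so these differentials do span the correct space of linear conditions.) Granting this, the equivalence is immediate from the definition of a nonsingular point: $P$ is nonsingular means $\dim_P(V) = \dim(\Theta_P V) = n - \operatorname{rank} J(P)$, which is the same as $\operatorname{rank} J(P) = n - \dim_P(V)$.

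The real content is the universal inequality $\dim_P(V) \le \dim(\Theta_P V)$. I would first reduce to the case where $V$ is irreducible. Writing $V = V_1 \cup \dots \cup V_r$ for the decomposition into irreducible components and choosing the component $V_i \ni P$ with $\dim_P(V_i) = \dim_P(V)$, the inclusion $\mathcal{I}(V) \subseteq \mathcal{I}(V_i)$ forces the intrinsic Zariski tangent spaces to satisfy $T_P V_i \subseteq T_P V$ (more functions in the ideal impose more linear conditions). Moreover, since $\langle f_1,\dots,f_m\rangle \subseteq \mathcal I(V)$, the intrinsic tangent space $T_P V$ is contained in the $\Theta_P V$ built from the given generating set, so $\dim(\Theta_P V) \ge \dim T_P V \ge \dim T_P V_i$. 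Hence it suffices to prove the inequality $\dim_P(V_i) \le \dim T_P V_i$ for the intrinsic tangent space of an irreducible variety.

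Finally, for irreducible $V$ I would translate the inequality into commutative algebra. The intrinsic tangent space is canonically $(\mathfrak m_P / \mathfrak m_P^2)^{*}$, where $(\mathcal O_{V,P},\mathfrak m_P)$ is the local ring of $V$ at $P$; thus $\dim(T_P V) = \dim_{\mathbb C}\bigl(\mathfrak m_P/\mathfrak m_P^2\bigr)$, which by Nakayama's lemma is the minimal number of generators of $\mathfrak m_P$. On the other hand $\dim_P(V) = \dim \mathcal O_{V,P}$, the Krull dimension. The desired bound is then the standard inequality $\dim \mathcal O_{V,P} \le \dim_{\mathbb C}(\mathfrak m_P/\mathfrak m_P^2)$ for a Noetherian local ring: any generating set of $\mathfrak m_P$ is a set of parameters, and the Krull dimension never exceeds the number of parameters. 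I expect this last step to be the main obstacle, since it is exactly the nontrivial input from Krull's dimension theory; I would either invoke it directly from the references \cite{SH} and \cite{SKKT} or reprove it via a system of parameters. Everything preceding it is formal linear algebra or a routine reduction.
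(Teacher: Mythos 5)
Your argument is correct, but there is nothing in the paper to compare it against: Theorem \ref{cj} is stated in Section \ref{ARG} purely as background, without proof, with the reader referred to \cite{SH} and \cite{SKKT}. What you have written is essentially the standard textbook proof from those sources. Your decomposition is the right one: the identity $\dim(\Theta_P V)=n-\operatorname{rank}J(P)$ is immediate from the definition of $\Theta_P V$ as the zero locus of the linearized generators, and with the paper's definition of nonsingularity ($\dim_P(V)=\dim(\Theta_P V)$) the stated equivalence is then a tautology --- you are right that all the content sits in the inequality $\dim_P(V)\le\dim(\Theta_P V)$. Your reduction to the irreducible case via $T_P V_i\subseteq T_P V\subseteq\Theta_P V$ is sound, and the final step is correctly identified as the inequality $\dim\mathcal{O}_{V,P}\le\dim_{\mathbb{C}}(\mathfrak{m}_P/\mathfrak{m}_P^2)$, i.e.\ Krull dimension bounded by embedding dimension, which does have to be imported from Krull's height theorem; citing it is appropriate here, as the paper itself imports the whole theorem. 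One remark worth keeping in mind: as your parenthetical about $dg|_P=\sum h_i(P)\,df_i|_P$ shows, $\Theta_P V$ depends only on the ideal $\langle f_1,\dots,f_m\rangle$, not on the chosen generators, but it does depend on that ideal rather than on $V$ alone (compare $\langle x\rangle$ and $\langle x^2\rangle$). So the ``criterion'' detects nonsingularity in the usual intrinsic sense only when the given ideal is radical; your proof is faithful to the paper's literal, generator-relative definitions, and the one-sided inequality you prove --- which is the only part the paper actually uses, via \eqref{dimfiber} --- holds unconditionally.
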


Let $f: V\rightarrow W$ be a map between quasi-affine algebraic variety with $W\subset \mathbb{C}[X_1,...,X_n]$. $f$ is said regular in $P\in V$ if there is open subset $U \subset V$ containing $P$ and polynomials $g_1,...,g_n,h_1,...,h_n$ with $h_1(P),...,h_n(P)\neq 0$ and for all $Q\in U$ we have
$$f(Q)=\left(\frac{g_1}{h_1}(Q),...,\frac{g_n}{h_n}(Q)\right).$$
A regular map is said to be \emph{dominant} if $f(V)$ is dense in W. In this case $f(V)$ contains a non-empty open subset of $W.$

The canonical projections are simple examples of regular maps that will be used consistently in the  next section.  To finally this part, we enunciate another tool essential for our proof.

\begin{theorem}[Dimension of fibers Theorem]
Let $V$ and $W$ be irreducible quasi-affine algebraic varieties. Set $f: V \rightarrow W$ a surjective
regular morphism. Then, there exists a Zariski-open subset $U \subset W$ such that for each $P \in U$
we have that $\text{dim}(f^{-1}(P))=\text{dim}(V)-\text{dim}(W).$
\end{theorem}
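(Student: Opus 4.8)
The plan is to reduce the theorem to the classical case of a generically finite dominant morphism and to control everything outside a good open locus by a Noetherian induction on $\dim V$. First I would pass to affine models. Since $f$ is surjective and $W$ is irreducible, $f$ is dominant, so I may replace $W$ by a nonempty affine open set and $V$ by $f^{-1}(W)$; a nonempty open subset of an irreducible variety is dense and has the same dimension, so nothing is lost and both varieties remain irreducible affine. Now $f^{*}$ embeds $\mathbb{C}(W)$ into $\mathbb{C}(V)$, and writing $r=\dim V$, $s=\dim W$, the standard identification of dimension with transcendence degree over $\mathbb{C}$ shows that $\mathbb{C}(V)$ has transcendence degree $d:=r-s$ over $\mathbb{C}(W)$. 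This integer $d$ is exactly the fiber dimension I must produce on an open set.

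Next I would factor $f$ through a projection. Choosing $t_{1},\dots ,t_{d}\in\mathbb{C}(V)$ that form a transcendence basis of $\mathbb{C}(V)$ over $\mathbb{C}(W)$, these rational functions are regular on a dense open subset of $V$, and together with $f$ they define a morphism
$$g=(f,t_{1},\dots ,t_{d})\colon V\longrightarrow W\times\mathbb{A}^{d}$$
on that open set, satisfying $f=\mathrm{pr}\circ g$ with $\mathrm{pr}$ the projection onto $W$. By construction $g$ is dominant, and since $\mathbb{C}(V)$ is a finitely generated algebraic extension of $\mathbb{C}(W)(t_{1},\dots ,t_{d})=\mathbb{C}(W\times\mathbb{A}^{d})$, the source and target of $g$ have equal dimension $r$ and $g$ is generically finite. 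The fiber of $f$ over $P$ is then $g^{-1}(\{P\}\times\mathbb{A}^{d})$, where $\{P\}\times\mathbb{A}^{d}$ has dimension $d$.

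The heart of the argument is the finite-map lemma: a dominant generically finite morphism $g$ of irreducible varieties is a finite surjective morphism over a nonempty open subset $Z_{0}$ of its target. I would prove this by the primitive element theorem, writing $\mathbb{C}(V)=\mathbb{C}(W\times\mathbb{A}^{d})[\theta]$ with $\theta$ integral over the coordinate ring after inverting the leading coefficient and discriminant of its minimal polynomial; over the resulting $Z_{0}$ the coordinate functions of $V$ become integral, so $g^{-1}(Z_{0})\to Z_{0}$ is finite and surjective and hence dimension-preserving. Consequently, for every $P$ with $\{P\}\times\mathbb{A}^{d}$ meeting $Z_{0}$ --- an open condition on $P$, since the image of the dominant map $\mathrm{pr}|_{Z_{0}}$ contains a nonempty open set --- the preimage $g^{-1}\bigl((\{P\}\times\mathbb{A}^{d})\cap Z_{0}\bigr)$ is finite and surjective over the $d$-dimensional dense open set $(\{P\}\times\mathbb{A}^{d})\cap Z_{0}$, so it has dimension exactly $d$. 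This produces a component of $f^{-1}(P)$ of dimension $d$ and shows $\dim f^{-1}(P)\geq d$ on an open set.

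The remaining and main obstacle is the matching upper bound on the whole fiber, i.e. controlling the pieces of $f^{-1}(P)$ lying over the bad locus $(W\times\mathbb{A}^{d})\setminus Z_{0}$ and over the locus where the $t_{i}$ fail to be regular. I would organize this as an induction on $\dim V$: every such piece lies in a proper closed subset $V'\subsetneq V$ with $\dim V'\leq r-1$, and for each irreducible component of $V'$ either its image closure in $W$ is proper --- so a generic $P$ avoids it --- or the inductive hypothesis applies to the restriction of $f$ and bounds the generic fiber dimension by $\dim V'-s\leq d-1<d$. Intersecting the finitely many nonempty open subsets of $W$ produced along the way yields an open $U\subseteq W$ on which the bad contributions have dimension $<d$ while the good contribution has dimension $d$, giving $\dim f^{-1}(P)=d=\dim V-\dim W$ for all $P\in U$. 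The delicate point throughout is keeping every shrinking expressed as an open condition on the base point $P\in W$ rather than on the fiber coordinate, which is precisely what the factorization through $W\times\mathbb{A}^{d}$ and the openness of images of dominant maps are designed to guarantee.
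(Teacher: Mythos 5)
The paper does not actually prove this theorem: it is quoted verbatim as a standard tool, with \cite{SH} and \cite{SKKT} given as references, so there is no in-paper argument to compare yours against. Your proposal is a correct outline of the standard proof (essentially the one in Shafarevich): pass to affine models, pick a transcendence basis $t_1,\dots,t_d$ of $\mathbb{C}(V)$ over $\mathbb{C}(W)$ to factor $f$ through $W\times\mathbb{A}^{d}$ by a dominant generically finite map $g$, make $g$ finite over a principal open $Z_0$ via the primitive element theorem (separability is automatic over $\mathbb{C}$), deduce that the good part of $f^{-1}(P)$ is finite and surjective over a dense open subset of $\{P\}\times\mathbb{A}^{d}$ and hence has dimension exactly $d=\dim V-\dim W$, and sweep the loci where the $t_i$ are not regular or where $g$ is not finite into a proper closed subset handled by induction on the dimension of the source. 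The one place the write-up is loose is that inductive step: the restriction of $f$ to an irreducible component of the bad locus that dominates $W$ need not be surjective, so you cannot literally invoke the theorem as stated as your inductive hypothesis; the statement the induction must carry is the upper-bound half for dominant (not necessarily surjective) morphisms of irreducible varieties, which is exactly what your argument establishes and all that is used. With that formulation made explicit, the proof is complete and is the standard one the paper's references supply.
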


%\begin{lemma}
%Let $f:V\rightarrow W$ a dominant regular map such that $V$ is irreducible and there exists open set $U\subset W$ such that $\text{dim}(f^{-1}(Q))\leq d$ for all $Q\in U$. Then $\text{dim}(V)\leq \text{dim}(W)$
%\end{lemma}
%\begin{proof}There is irreducible component $W_j$ of $\overline{\pi_1(\Omega_i)}$ and open subset $\Omega_{i0}$ of $\Omega_i$ such that 
%restriction of the projection $\pi_1: \Omega_{i0} \rightarrow W_{j}$ is dominant.  Since $\Omega_i \not \subset \Delta_{2}$ and $\pi_1$ is dominant,  we can obtain open subsets 
%$\Omega^{'}_{i0}\subset \Omega_{i0}$ and $W^{'}_{j0}\subset  W_{j0}$ such that $\pi_1: \Omega^{'}_{i0} \rightarrow W^{'}_{i0}$ is surjective and $\Omega^{'}_{i0}\cap \Delta_2=\emptyset$. By the dimension of fibers theorem there exists open subset $U\subset W^{'}_{j0}$ such that 
%\begin{equation*}
%\dim(\Omega^{'}_{i0})-\text{dim}(W^{'}_{j0})=\text{dim}(\pi^{-1}(\bar{r}_{ij}))
%\end{equation*}
%for all $(\bar{r}_{ij})\in U$. Note that $\text{dim}(\pi^{-1}(\bar{r}_{ij}))\cap \Delta_{2}=\emptyset.$ Hence, $\text{rank}\left(\left[\frac{\partial L_i}{\partial m_i}\right](P)\right)\geq 2$ and by  equation $\eqref{dimfiber}$ we get $\text{dim}(\pi^{-1}(\bar{r}_{ij}))\leq 3$ for all $(\bar{r}_{ij})\in U$. Since $\text{dim}((W^{'}_{j0})=\text{dim}((W_{j})\leq \text{dim}(\overline{\pi_1(\Omega_i)})\leq 2.$ Therefore
%$$\dim(\Omega_{i})=\dim(\Omega^{'}_{i0})=\text{dim}(W^{'}_{j0})+\text{dim}(\pi^{-1}(\bar{r}_{ij}))\leq2+3=5$$
%\end{proof}

\subsection{Groebner Basis}

In the proof of proposition $\ref{lemmaD3}$, we relax the concept of Groebner basis to compute a dimension of some irreducible components of the quasi-affine variety $V$ defined in \ref{formulation}. To explain our idea, we will introduce some concepts of Groebner basis theory. The basic reference for this part is \cite[Ch.IX.3]{CLO}.

A monomial ordering on $R=\mathbb{C}[X_1,...,X_n]$ is a total order on the set of the monomials of $R$, $\text{Mon}(R)=\{X^{a}=X_{1}^{a_{1}}...X_{n}^{a_{n}}:a=(a_1,...,a_n)\in \mathbb{N}^{n}\}$ that satisfies the following conditions:
\begin{enumerate}
\item $x^{a}\prec x^{b} \Rightarrow x^{a+c}\prec x^{b+c} $, for all $a,b,c\in\mathbb{N}\setminus \{0\};$
\item   $1\prec x^{a},$ for all $a \in \mathbb{N}\setminus \{0\}.$
\end{enumerate}

Fixed a monomial ordering $\prec$ on $R$ and chosen a polynomial $f\in R$ it is possible to write $f$ in the form
$$f=c_{a}X^{a}+\displaystyle \sum_{X^{b}\prec X^{a}}c_{b}X^{b},$$
for which $c_{b}\in \mathbb{C}$, $c_{a}\neq 0$. The term $\text{LT}(f)=c_{a}X^{a}$ is called \emph{leading term} of $f$. 

Given a ideal $I \subset R$, we define \emph{the ideal of leading terms of} $I$ by $\text{LT}(I)=\{\emph{LT}(f): f\in I\}$. A finite subset $G=\{g_1,...,g_k\}\subset I$ is named \emph{Groebner basis} if $\text{LT}(I)=\langle\text{LT}(g_1),...,\text{LT}(g_k)\rangle.$

Our next goal is to define the Hilbert polynomial of an ideal.  This concept is very useful  to determine the dimension of an algebraic variety. The basic idea is that the Hilbert polynomial of $I$ contains information about the ``size'' $I$, and consequently, it will provide the dimension of the variety $V=Z(I)$. The most natural idea for computing the ``size'' of $I$ is to compute the codimension of $I$ as a vector space of $\mathbb{C}[X_1,...,X_n]$. Since this vector spaces have infinite dimension, we need to work on the finite dimensional vector spaces that we will define in the following. 

Let $\mathbb{C}[X_1,...,X_n]_{\leq s}$ be the vector space of the polynomials with total degree less than or equal to $s$ and let $I_{\leq s}=I\cap\mathbb{C}[X_1,...,X_n]_{\leq s}$ be the vector space of the polynomials in $I$ with degree less than or equal to $s$. The  \emph{affine Hilbert function} of the ideal $I\subset \mathbb{C}[X_1,...,X_n]$ is the function on the nonnegative integers $s$ defined by
$$HF(I)(s)=\text{dim}\left( \mathbb{C}[X_1,...,X_n]_{\leq s}\right)-\text{dim}(I_{\leq s}).$$

The most basic property of Hilbert function is that for $s$ sufficiency large the Hilbert function $\text{HF}_{I}(s)$ can be written as a polynomial $HP_{I}(s)=\sum_{i=0}^{d}b_{i}\binom{s}{d-i}$ for which $b_i$ are integers and $b_{0}$ is positive. This polynomial is called the \emph{the Hilbert polynomial} of $I\subset \mathbb{C}[X_1,...,X_{n}]$. 

A monomial ordering $\prec$ on $\mathbb{C}[X_1,...,X_n]$ is a graded order if $X^b \prec X^a$ whenever
$|b_1+...+b_n| < |a_1+...+a_n|$. Consider an ideal $I\subset\mathbb{C}[X_1,...,X_n]$ and take $\text{LT}(I)$ the ideal of leading terms of $I$ with respect some graded monomial ordering. It is very important to note that in this hypothesis the  Hilbert polynomials of $I$ and $\text{LT}(I)$ are equal. The next theorem explicit the relation between the Hilbert polynomial and dimension.

\begin{theorem}[The Dimension Theorem] Suppose that $\prec$ is a graded monomial ordering on $\mathbb{C}[X_1,...,X_n].$
Let $I$ and ideal of $\mathbb{C}[X_1,...,X_n]$ and $\text{LT}(I)$ the ideal of leading terms of $I$ with respect to $\prec$.
Then,
$$\text{dim}(Z(I))=\text{deg}(\text{HP}_{I})=\text{deg}(\text{HP}_{\text{LT}(I)}).$$
\end{theorem}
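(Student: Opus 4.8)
The plan is to prove the two displayed equalities as separate pieces: the purely algebraic identity $\deg(\mathrm{HP}_{I})=\deg(\mathrm{HP}_{\mathrm{LT}(I)})$, and then the geometric identity $\dim(Z(I))=\deg(\mathrm{HP}_{I})$.

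For the first identity I would in fact establish the stronger statement that the affine Hilbert functions themselves coincide, $HF(I)(s)=HF(\mathrm{LT}(I))(s)$ for every $s$. Fix a Groebner basis $G=\{g_1,\dots,g_k\}$ of $I$ with respect to the graded order $\prec$, and call a monomial \emph{standard} if it does not lie in $\mathrm{LT}(I)$. The key claim is that the standard monomials of total degree $\leq s$ form a $\mathbb{C}$-basis of $\mathbb{C}[X_1,\dots,X_n]_{\leq s}/I_{\leq s}$. Spanning follows from the division algorithm: dividing any $f\in\mathbb{C}[X_1,\dots,X_n]_{\leq s}$ by $G$ yields a remainder that is a $\mathbb{C}$-linear combination of standard monomials, and because $\prec$ is graded each division step cannot raise the total degree, so the remainder involves only standard monomials of degree $\leq s$. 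Linear independence follows because a nontrivial relation among standard monomials lying in $I$ would have its leading term in $\mathrm{LT}(I)$, contradicting standardness. The identical count applies verbatim to the monomial ideal $\mathrm{LT}(I)$, whose standard monomials are literally the monomials outside it. Hence both $HF(I)(s)$ and $HF(\mathrm{LT}(I))(s)$ equal the number of standard monomials of degree $\leq s$, so the two Hilbert polynomials agree and, a fortiori, so do their degrees. This is exactly the step where the graded hypothesis on $\prec$ is indispensable.

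For the second identity I would first use the first part to reduce the computation of $\deg(\mathrm{HP}_{I})$ to the monomial ideal $M=\mathrm{LT}(I)$, whose Hilbert polynomial degree can be read off combinatorially as the largest dimension $d$ of a coordinate subspace contained in $Z(M)$ — equivalently, the size of the largest set of variables none of whose pure powers is forced into $M$. This turns $\deg(\mathrm{HP}_{I})$ into a lattice-point count against the staircase of $M$, whose leading growth has order $s^{d}$. The substantive remaining point is that this number $d$ equals $\dim(Z(I))$, and here I would invoke Noether normalization: after a generic linear change of coordinates the coordinate ring $\mathbb{C}[X_1,\dots,X_n]/I$ becomes a finitely generated module over a polynomial subring $\mathbb{C}[Y_1,\dots,Y_e]$ with $e=\dim(Z(I))$, where this dimension (defined via chains of irreducible subvarieties) agrees with the Krull dimension of the coordinate ring and is unchanged on passing to $\sqrt{I}$. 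Since the Hilbert function of a finite module over a polynomial ring in $e$ variables grows like $s^{e}$, one obtains $\deg(\mathrm{HP}_{I})=e=\dim(Z(I))$, and combined with the first part all three quantities coincide.

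The hard part will be this last geometric equality $\dim(Z(I))=\deg(\mathrm{HP}_{I})$, namely confirming that the asymptotic growth rate of the affine Hilbert function genuinely records the dimension defined by the longest chain of proper irreducible subvarieties. The difficulty is the passage between the ideal-theoretic data and the geometric dimension: one must control the non-reduced structure by replacing $I$ with $\sqrt{I}$, use that over $\mathbb{C}$ the Krull dimension of the coordinate ring matches the chain-length dimension, and supply the Noether normalization input. A route that sidesteps explicit Noether normalization is an induction on dimension by generic hyperplane sections, showing that a generic linear form drops both $\dim(Z(I))$ and $\deg(\mathrm{HP}_{I})$ by one simultaneously; the base case of dimension zero matches the already-stated fact that $\dim(V)=0$ if and only if $V$ is finite.
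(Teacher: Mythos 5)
The paper does not prove this statement at all: it is quoted as a known result with the reference \cite[Ch.IX.3]{CLO}, so there is no in-paper argument to compare yours against. Your outline is correct and is essentially the standard proof from that very reference --- the standard-monomial count giving $HF(I)(s)=HF(\text{LT}(I))(s)$ (with the graded hypothesis used exactly where you say, to keep the division algorithm from raising total degree), followed by the identification of $\deg(\text{HP}_I)$ with the chain-length dimension via Noether normalization or generic hyperplane sections; the only cosmetic point is that your detour through the largest coordinate subspace contained in $Z(\text{LT}(I))$ becomes redundant once Noether normalization is applied directly to $\mathbb{C}[X_1,\dots,X_n]/I$.
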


Now we will enunciate a simple observation that will be the basis for the procedure described in proposition $\ref{lemmaD3}$.

\begin{lemma}\label{PLT}
Let $I$  be a  ideal of $\mathbb{C}[X_1,...,X_n]$ and $F=\{f_1,...,f_k\}$ a set of monomials such that $F\subset \text{LT}(I)$, for which $\text{LT}(I)$ is the ideal of leading terms of $I$ with respect to a graded monomial order $\prec$ on $\mathbb{C}[X_1,...,X_n]$. If $\text{dim}(Z(F))=k$ then $\text{dim}(Z(I)))\leq k.$ 
\end{lemma}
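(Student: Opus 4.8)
The plan is to reduce the computation of $\text{dim}(Z(I))$ to that of the monomial variety $Z(\text{LT}(I))$ by means of the Dimension Theorem, and then to exploit the containment $F \subseteq \text{LT}(I)$ together with the monotonicity of dimension under inclusion of varieties. The whole argument is formal, so I will keep it short.

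First I would note that $\text{LT}(I)$ is an ideal and that each $f_i \in F$ lies in it, whence the ideal they generate satisfies $\langle F\rangle \subseteq \text{LT}(I)$. Since the operator $Z(\cdot)$ is inclusion-reversing, this gives $Z(\text{LT}(I)) \subseteq Z(\langle F\rangle) = Z(F)$. Next I would invoke monotonicity of dimension: if one affine variety is contained in another, then every chain of irreducible subvarieties of the smaller one is automatically such a chain in the larger one (an irreducible subvariety of $Z(\text{LT}(I))$ is an irreducible algebraic set contained in $Z(\text{LT}(I))$, hence contained in $Z(F)$), so the length of the longest chain can only grow. Applied to the inclusion above this yields $\text{dim}(Z(\text{LT}(I))) \leq \text{dim}(Z(F)) = k$, where the last equality is the hypothesis.

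Finally, because $\prec$ is a graded monomial order, the Hilbert polynomials of $I$ and of $\text{LT}(I)$ coincide, as recorded before the Dimension Theorem. That theorem identifies the dimension of a variety with the degree of its Hilbert polynomial, so $\text{dim}(Z(I)) = \text{deg}(\text{HP}_I) = \text{deg}(\text{HP}_{\text{LT}(I)}) = \text{dim}(Z(\text{LT}(I)))$. Chaining the two displays gives $\text{dim}(Z(I)) = \text{dim}(Z(\text{LT}(I))) \leq k$, which is exactly the claim.

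There is no substantive obstacle here; the only point that genuinely requires care is that the equality $\text{dim}(Z(I)) = \text{dim}(Z(\text{LT}(I)))$ depends on $\prec$ being \emph{graded} (for a non-graded order the Hilbert polynomials of $I$ and $\text{LT}(I)$ need not agree), so I would make sure that hypothesis is explicitly invoked at that step. Everything else is a direct consequence of the inclusion-reversing property of $Z(\cdot)$ and monotonicity of dimension.
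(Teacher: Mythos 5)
Your proposal is correct and follows essentially the same route as the paper: the inclusion $F\subset\text{LT}(I)$ gives $Z(\text{LT}(I))\subset Z(F)$, monotonicity of dimension gives $\text{dim}(Z(\text{LT}(I)))\leq k$, and the Dimension Theorem (valid because $\prec$ is graded) identifies $\text{dim}(Z(I))$ with $\text{dim}(Z(\text{LT}(I)))$. The only difference is that you spell out the intermediate steps (the ideal generated by $F$, the chain-of-subvarieties argument, and the role of the graded hypothesis) that the paper leaves implicit.
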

\begin{proof}
Since $F\subset LT(I)$ then $Z(\text{LT}(I))\subset Z(F)$. By the Dimension Theorem, 
$$\text{dim}(Z(I)))=\text{dim}(Z(\text{LT}(I))))\leq\text{dim}(Z(F))=k.$$
\end{proof}

\section{The Finiteness Result}\label{FN}

We call $\mathbb{C}^{11}$ the \emph{mutual distances space} and $\mathbb{C}^5$ the \emph{mass space}.
Consider the projections $\pi_1:\Omega \rightarrow \mathbb{C}^{11}$   and $\pi_2:\Omega \rightarrow \mathbb{C}^5$ defined by 
\begin{equation*}
\pi_1((r_{ij},s_{ijk},m_i))=(r_{ij}) \text{ and }  \pi_2((r_{ij},s_{ijk},m_i))=(m_i). 
\end{equation*}
We will study this canonical projections using the tools developed in the last section.  Our approach let us to avoid the Groebner basis computations involving directly the Laura-Andoyer equations.

\begin{prop}\label{P1} Consider the affine varieties $H=Z(F_1,...,F_8)$ and $D$ of $\mathbb{C}^{11}$, for which $F_1$,...,$F_8$ and $D$ are as in the proposition  \ref{formulation}. Define the quasi-affine variety $E=H \setminus D$. Then  $\overline{\pi_1(\Omega)} \subset E$.
\end{prop}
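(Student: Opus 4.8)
The plan is to exploit the fact that the shape polynomials $F_1,\dots,F_8$ from Proposition~\ref{formulation} involve only the distance variables $R_{ij}$ and none of the variables $S_{ijk}$ or $M_i$. Consequently these polynomials ``live'' simultaneously in the big ring $A=\mathbb{C}[S_{ijk},R_{ij},M_i]$ and in the distance ring $\mathbb{C}[R_{ij}]$, and for every point $P=(s_{ijk},r_{ij},m_i)\in\mathbb{C}^{32}$ one has $F_k(P)=F_k(\pi_1(P))$, where on the right-hand side $F_k$ is read as a polynomial on $\mathbb{C}^{11}$. This compatibility of the $F_k$ with the projection $\pi_1$ is the whole engine of the argument.

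First I would establish the inclusion $\pi_1(\Omega)\subset H$. Take any $P\in\Omega\subset\widetilde{V}=Z(I)$. Since $F_1,\dots,F_8$ are among the generators of $I$, they vanish at $P$; and because each $F_k$ depends only on the $R_{ij}$-coordinates, the value $F_k(\pi_1(P))$ equals $F_k(P)=0$. Hence $\pi_1(P)\in Z(F_1,\dots,F_8)=H$, which gives $\pi_1(\Omega)\subset H$. As $H$ is a Zariski-closed subset of $\mathbb{C}^{11}$ and the Zariski closure is the smallest closed set containing $\pi_1(\Omega)$, I immediately conclude $\overline{\pi_1(\Omega)}\subset H$.

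It then remains to upgrade $H$ to $E=H\setminus D$. Here I would observe that, inside $\mathbb{C}^{11}$, the set $D=\mathcal{Z}(R_{12},\dots,R_{56})$ is the common zero locus of all eleven coordinate functions, i.e. the single point at the origin. But the origin does not lie on $H$: evaluating, for instance, $F_8=4R_{45}^2-R_{56}^2-4$ at the origin gives $-4\neq 0$ (the same happens for $F_5,F_6,F_7$, each of which carries a nonzero constant term). Therefore $H\cap D=\emptyset$, so $E=H\setminus D=H$ as sets, and the inclusion $\overline{\pi_1(\Omega)}\subset H$ proved above is exactly $\overline{\pi_1(\Omega)}\subset E$.

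I do not expect a genuine obstacle in this proposition; it is essentially a bookkeeping statement. The only points requiring care are being explicit that the $F_k$ are ``pure distance'' equations, so that passing through $\pi_1$ is legitimate, and noticing that the nonzero constant terms of $F_5,\dots,F_8$ (coming from the normalization $x_{52}-x_{42}=1$) force the degenerate all-distances-zero point out of $H$. This last fact is precisely what makes removing $D$ cost nothing and lets the closure stay inside the quasi-affine set $E$.
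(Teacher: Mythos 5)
Your proof is correct. The first half --- observing that $F_1,\dots,F_8$ depend only on the $R_{ij}$, vanish on $\Omega\subset\widetilde V$, hence lie in $\mathcal{I}(\pi_1(\Omega))$ and force $\overline{\pi_1(\Omega)}\subset H$ --- is exactly the paper's argument. Where you diverge is in the removal of $D$. The paper disposes of this step by saying that $\Omega=\widetilde V\setminus D$ ``gives'' $\overline{\pi_1(\Omega)}\cap D=\emptyset$; taken literally this only yields $\pi_1(\Omega)\cap D=\emptyset$, and passing to the Zariski closure could in principle add points of $D$, so the paper's one-liner leaves a small gap. You instead prove the stronger statement $H\cap D=\emptyset$ directly: in $\mathbb{C}^{11}$ the variety $D$ is just the origin, and each of $F_5,\dots,F_8$ has constant term $-4$ (coming from the normalization $x_{52}-x_{42}=1$), so the origin is not on $H$ and $E=H$ as sets. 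Since $\overline{\pi_1(\Omega)}\subset H$ and $H$ is closed and disjoint from $D$, the closure statement is then automatic. Your route is tighter and actually repairs the imprecision in the paper's version; the paper's route is shorter but relies on an inference that needs exactly your observation to be made airtight.
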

\begin{proof}
The polynomials $F_1,...,F_8$ are written only in terms of the variables $R_{ij}$ and belong to the ideal of $\Omega$.
Then, $F_1,...,F_8 \in \mathcal{I}(\pi_1(\Omega))$ and consequently $\overline{\pi_1(\Omega)}\subset H$. By proposition \ref{formulation}, $\Omega=V\setminus D$, which gives $\overline{\pi_1(\Omega)}\cap D = \emptyset$. Therefore, $\overline{\pi_1(\Omega)} \subset E=H\setminus D$.
\end{proof}

By proposition $\ref{P1}$ we can restrict our study to the projection on the mutual distances space to $\pi_1: \Omega \rightarrow E$. Making computations in the software Singular we obtain the following:
\begin{enumerate}
\item $\text{dim}(H)=4$;
\item $\mathcal{I}(H)$ is primary ideal.
\end{enumerate}
Consequently,  $E=H\setminus D$ is a irreducible quasi-affine algebraic set and $\text{dim}(E)=4$. 
Let $\bar{r}=(\bar{r}_{ij})$ be an arbitrary point of $\pi_1(\Omega)$. The fiber $\pi_1^{-1}(\bar{r}_{ij})$ is a closed subset of $\Omega$ defined
by the polynomial equations $A_{ij}=R_{ij}-\bar{r}_{ij}=0.$ Note that the polynomials $B_{ijk}=\bar{r}_{ij}^3\bar{r}_{jk}^3S_{ijk}+\bar{r}_{ij}^3-\bar{r}_{jk}^3\in \mathcal{I}(\pi_1^{-1}(\bar{r}_{ij}))$. Hence,
$$\pi_1^{-1}(\bar{r}_{ij})= Z(A_{ij},B_{ijk},L_i)\setminus D,$$
for which the polynomials $L_i$ and the algebraic variety $D$ are as in  proposition \ref{formulation}.

We will provide an estimate for the dimension of the fibers using the Jacobian criterion. Let $P=(s_{ijk},\bar{r}_{ij},m_i)\in \pi_1^{-1}(\bar{r}_{ij})$. The Jacobian matrix $J(P)=J(B_{ijk},A_{ij},L_i)$ is given by
\renewcommand{\arraystretch}{1.5}
 $$J(P)=\left(\begin{array}{c|c|c}
 \frac{\partial B_{ijk}}{\partial S_{ijk}}_{16 \times 16}& \frac{\partial B_{ijk}}{\partial R_{ij} }_{16 \times 11}&\frac{\partial B_{ijk}}{\partial M_i}_{16 \times 5}\\ \hhline{-|-|-}
 \frac{\partial A_{ij}}{\partial S_{ijk}}_{11 \times 16} &\frac{\partial A_{ij}}{\partial R_{ij} }_{11 \times 11}& \frac{\partial A_{ij}}{\partial M_i}_{11 \times 5}\\   \hhline{-|-|-}
\frac{\partial L_i}{\partial S_{ijk}}_{4\times 16}& \frac{\partial L_i}{\partial R_{ij} }_{4\times 11} &\frac{\partial L_i}{\partial M_i}_{4 \times 5}
 \end{array}\right)(P).$$
It is easy to see that, for all $P\in \pi_{1}^{-1}(\bar{r}_{ij})$, we have: 
\begin{align*}
\left[\frac{\partial B_{ijk}}{\partial S_{ijk}}\right](P)=[K]_{16 \times 16}& \quad  \left[\frac{\partial B_{ijk}}{\partial R_{ij}}\right](P)=[0]_{16 \times 11}& \left[\frac{\partial B_{ijk}}{\partial M_i}\right](P)=[0]_{16 \times 5}\\
 \left[\frac{\partial A_{ij}}{\partial S_{ijk}}\right](P)=[0]_{11 \times 16} & \quad \left[\frac{\partial A_{ij}}{\partial R_{ij} }\right](P)=[I]_{11 \times 11}& \quad \left[\frac{\partial A_{ij}}{\partial M_i}\right](P)=[0]_{11 \times 5}
\end{align*}
%

%$$g^{a}_{ij}=\begin{cases}
%R_{ij}^{2a}-1-Z_{i}Z_{j},   & \text{ if } a>0, \\
%R_{ij}^{-2a}(Z_iZ_j+1)-1,   & \text{ if } a<0,
%\end{cases}$$
%for which $1\leq i<j \leq n.$ 

where $[I]_{11\times 11}$ denotes the identity matrix $11\times 11$, $[0]_{m\times n}$ denotes the null matrix $m\times n$ and 
$[K]_{16\times 16}$ is a diagonal non-singular matrix.

 Hence, the Jacobian matrix is block-triangular and the by Jacobian criterion we get 
\begin{equation}\label{dimfiber}
\text{dim}_{P}(\pi_{1}^{-1}(\bar{r}_{ij}))\leq 5- \text{rank}\left(\left[\frac{\partial L_i}{\partial m_i}\right](P)\right),
\end{equation}
for all $P\in \pi_{1}^{-1}(\bar{r}_{ij})$.\\

The next lemma provides an upper bound for the fibers of $\pi_1.$
\begin{prop}\label{dimf3}Consider the projection $\pi_1:\Omega\rightarrow E.$ If $ (\bar{r}_{ij})\in E$ is a vector of mutual distances provided by a CC6BP then $\text{dim}(\pi_{1}^{-1}(\bar{r}_{ij}))\leq 3$. 
\end{prop}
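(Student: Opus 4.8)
The inequality \eqref{dimfiber} already reduces everything to a rank estimate: for every $P$ in the fiber we have $\dim_P(\pi_1^{-1}(\bar r_{ij})) \le 5 - \operatorname{rank}\big(\big[\tfrac{\partial L_i}{\partial M_j}\big](P)\big)$. The plan is to exploit that the $4\times 5$ matrix $N(P) = \big[\tfrac{\partial L_i}{\partial M_j}\big](P)$ has entries that are polynomials in the $S_{ijk}$ and $R_{ij}$ only, with no dependence on the mass coordinates $M_i$ (each $L_i$ from Proposition \ref{formulation} is linear in the masses). On $\pi_1^{-1}(\bar r_{ij})$ the distances are frozen at $\bar r_{ij}$ through the equations $A_{ij}=0$, and the $S_{ijk}$ are then uniquely determined by $B_{ijk}=0$ since $\bar r_{ij}\neq 0$; hence $N$ is constant along the whole fiber. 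Therefore $\dim(\pi_1^{-1}(\bar r_{ij})) \le 5 - \operatorname{rank} N(\bar r_{ij})$, and it suffices to prove $\operatorname{rank} N(\bar r_{ij}) \ge 2$ whenever $\bar r_{ij}$ comes from a CC6BP.

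First I would read $N$ off from $L_1,\dots,L_4$: its four rows carry a zero in the diagonal positions $(1,1),(2,2),(3,3),(4,4)$, and the remaining entries are the products $S_{ab5}R_{ab}$ (up to sign and the factor $2$ in the last column). To see that $\operatorname{rank} N \ge 2$ holds generically on $E$, I would view the $2\times 2$ minors of $N$ as regular functions on the irreducible variety $E$ and evaluate them at the explicit configuration produced in Proposition \ref{importantexample}: substituting the numerical distances found there, a direct computation shows that at least one $2\times 2$ minor is nonzero. Since $E$ is irreducible of dimension $4$, the open set $\{\operatorname{rank} N \ge 2\}$ is then nonempty, hence dense, and its complement $E_{\le 1} := \{\operatorname{rank} N \le 1\}$ is a closed subvariety with $\dim E_{\le 1} \le 3$.

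The remaining and genuinely delicate point is to rule out that a \emph{special} CC6BP distance vector could fall in $E_{\le 1}$, since density alone does not cover every fiber. If $\operatorname{rank} N \le 1$ then all four rows of $N$ are proportional; combining this with the diagonal zero pattern forces many of the quantities $S_{ab5}$ to vanish, i.e.\ a large number of coincidences $r_{a5}=r_{b5}$, $r_{ab}=r_{a5}$, etc., among the mutual distances. The plan is to show such coincidences are incompatible with a genuine cross central configuration: they clash with the strict orderings (i)--(iv) on the bodies and with the shape equations \eqref{SH}, where it is natural to split according to the three positions of the point $Q$ relative to $x_1,x_2,x_3,x_4$. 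The cleanest way to organize this is the technique of Lemma \ref{PLT}: append the $2\times 2$ minors of $N$ to the shape polynomials $F_1,\dots,F_8$, pass to leading terms under a graded order, and use the resulting monomial ideal to bound the dimension of $E_{\le 1}$ and to certify that it carries no point satisfying the non-degeneracy conditions $R_{ij}\neq 0$ of a CC6BP. I expect this last step---turning the forced distance coincidences into an actual contradiction with the geometry, uniformly over all CC6BP and not merely generically---to be the main obstacle, and the reason both the explicit example of Proposition \ref{importantexample} and a partial Groebner basis computation are needed.
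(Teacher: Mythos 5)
Your reduction via \eqref{dimfiber} to the estimate $\operatorname{rank}\bigl(\bigl[\tfrac{\partial L_i}{\partial M_j}\bigr](P)\bigr)\geq 2$ is exactly right, and your remark that a rank-$\leq 1$ degeneration would force coincidences among the $s_{ab5}$ that clash with the strict ordering of $x_1,x_2,x_3,x_4$ is precisely the germ of the correct argument. But you never carry that step out, and the machinery you propose in its place cannot finish the job. Bounding $\dim E_{\leq 1}$ by a partial Groebner basis is beside the point: the proposition is a statement about \emph{every} fiber over a CC6BP distance vector, and a positive-codimension exceptional locus can perfectly well contain such a vector. More seriously, the conditions that actually exclude a CC6BP from $E_{\leq 1}$ are the strict real inequalities $\bar{r}_{14}>\bar{r}_{13}$, $\bar{r}_{24}>\bar{r}_{23}$, $\bar{r}_{14}>\bar{r}_{24}$ coming from the ordering (i)--(iv) defining $\mathcal{X}$; these are semialgebraic, not algebraic, so no leading-term or ideal-membership computation over $\mathbb{C}$ in the $R_{ij}$ alone can ``certify that $E_{\leq 1}$ carries no CC6BP point'' --- complex (or even real unordered) points of $E_{\leq 1}$ with all $R_{ij}\neq 0$ are not excluded by anything you compute. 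The appeal to the explicit example of Proposition \ref{importantexample} is also misplaced here: that example is needed only later, for the rank-$4$ statement of Lemma \ref{crucial}, not for the rank-$2$ bound.

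The gap closes with a short pointwise case analysis, which is what the paper does. From $\bar{r}_{14}>\bar{r}_{13}$ one gets $s_{315}\neq 0$ or $s_{415}\neq 0$; from $\bar{r}_{24}>\bar{r}_{23}$ one gets $s_{325}\neq 0$ or $s_{425}\neq 0$; from $\bar{r}_{14}>\bar{r}_{24}$ one gets $s_{145}\neq 0$ or $s_{245}\neq 0$ (each pair cannot vanish simultaneously, since that would force two distinct distances to coincide). If $s_{125}\neq 0$ or $s_{215}\neq 0$, the two lines of $\bigl[\tfrac{\partial L_i}{\partial M_j}\bigr]$ associated with $M_1$ and $M_2$ are each nonzero, and one of them has a nonzero entry exactly where the other has its structural zero, so they are independent. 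If $s_{125}=s_{215}=0$, the lines associated with $M_1$ and $M_4$ have nonzero entries supported on disjoint sets of positions, so again the rank is at least $2$. This argument is uniform over the whole fiber (the $s_{ijk}$ are determined by $\bar{r}_{ij}$, as you correctly note), requires no Groebner computation and no genericity, and uses only the inequalities built into the definition of a cross central configuration.
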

\begin{proof}
By inequality $\eqref{dimfiber}$, it is sufficient to proof that 
$$\text{rank}\left(\left[\frac{\partial L_i}{\partial m_i}\right](P)\right)\geq 2$$
for all $P=(s_{ijk},\bar{r}_{ij},m_i) \in \pi_{1}^{-1}(\bar{r}_{ij})$. $\left[\frac{\partial L_i}{\partial M_i}\right](P)$ is given by
$$\left(\begin{array}{cccc}
0 & - s_{215} \bar{r}_{12} & - s_{315} \bar{r}_{13} & s_{415} \bar{r}_{14} \\
s_{125} \bar{r}_{12} & 0 & - s_{325} \bar{r}_{23} & s_{425} \bar{r}_{24} \\
s_{135} \bar{r}_{13} & s_{235} \bar{r}_{23} & 0 & s_{435} \bar{r}_{34} \\
s_{145} \bar{r}_{14} & s_{245} \bar{r}_{24} & s_{345} \bar{r}_{34} & 0 \\
2s_{165}(\bar{r}_{14} - 1) & 2 s_{265}(\bar{r}_{24} - 1) & 2 s_{365}(\bar{r}_{34} - 1)& -2s_{465}(\bar{r}_{34} -1)
\end{array}\right).$$
We will divide the analysis in two cases. The first one is $s_{125}\neq 0$ or $s_{215}\neq 0.$ The shape of a $CC6BP$ implies  $\bar{r}_{14}>\bar{r}_{13}$ and $\bar{r}_{24}>\bar{r}_{23}$. Hence $s_{315}\neq 0$ or $s_{415}\neq0$ and
$s_{325}\neq 0$ or $s_{425}\neq0$. This considerations implies that the rank of 
$$\left(\begin{array}{cccc}
0 & - s_{215} \bar{r}_{12} & - s_{315} \bar{r}_{13} & s_{415} \bar{r}_{14} \\
s_{125} \bar{r}_{12} & 0 & - s_{325} \bar{r}_{23} & s_{425} \bar{r}_{24} \end{array}\right)$$
is greater than or equal to 2. \\

For conclude we  suppose now $s_{125}=s_{215}= 0$. Since $\bar{r}_{14}>\bar{r}_{24}$ and $\bar{r}_{14}>\bar{r}_{13}$, then $s_{315}\neq 0$ or $s_{415}\neq 0$  and  $s_{145}\neq 0$ or $s_{245}\neq 0.$ Hence the rank of
$$\left(\begin{array}{cccc}
0 & 0& s_{315} \bar{r}_{13}           & s_{415} \bar{r}_{14}     \\
s_{145} \bar{r}_{14} & s_{245} \bar{r}_{24} & 0&0 \end{array}\right)$$
is greater than $2$.
\end{proof}

In order to use the dimension of fibers theorem we will consider the projections $\pi_1:\Omega_i \rightarrow E$, $i=1,...,l$ defined on the irreducible components of $\Omega$.  Since $\text{dim}(E)=4$, we have $5$ cases to consider: $\text{dim}(\overline{\pi_1(\Omega_i)})=0,1,2,3$ or $4$.

%%%%%%%%%%%%%%%%%%%%%%%%%%%%%%%%%%%%%%%%%%%%%%%%%%%%%%%%%%%%%%%%%%%%%%%%%%%%%%%
%Consider the algebraic variety $\Omega=Z(Z_{ij},W_i,F_i,L_i)\setminus D$ as in the proposition \ref{formulation}. We will pass to study  the Jacobian $K(P)=J(Z_{ij},W_i,F_i,L_i)(P)$ on a point $P\in\Omega.$  
%\renewcommand{\arraystretch}{1.5}
% $$K(P)=\left(\begin{array}{c|c|c}
% \frac{\partial Z_{ij},W_{i}}{\partial S_{ijk}}_{16 \times 16}& \frac{\partial Z_{ij},W_{i}}{\partial \bar{r}_{ij} }_{16 \times 11}&\frac{\partial Z_{ij},W_{i}}{\partial m_i}_{16 \times 5}\\ \hhline{-|-|-}
% \frac{\partial F_{i}}{\partial S_{ijk}}_{11 \times 8} &\frac{\partial F_{i}}{\partial \bar{r}_{ij} }_{11 \times 8}& \frac{\partial F_{i}}{\partial m_i}_{11 \times 8}\\   \hhline{-|-|-}
%\frac{\partial L_i}{\partial S_{ijk}}_{4\times 16}& \frac{\partial L_i}{\partial \bar{r}_{ij} }_{4\times 11} &\frac{\partial L_i}{\partial m_i}_{4 \times 5}
% \end{array}\right).$$

%%%%%%%%%%%%%%%%%%%%%%%%%%%%%%%%%%%%%%%%%%%%%%%%%%%%%%%%%%%
%\begin{lemma}
%For all $P\in \Omega$, 
%$$\text{rank}\left[ \frac{\partial Z_{ij},W_{i}}{\partial S_{ijk}}\right]=16$$
%\end{lemma}
%\begin{proof}
%Since 
%
%\end{proof}
%
%\begin{lemma}
%For all $P\in \Omega$, 
%$$\text{rank}\left[ \frac{\partial F_i}{\partial \bar{r}_{ij}}\right]\geq 8 $$
%\end{lemma}
%\begin{proof}
%aaaa
%\end{proof}
%%%%%%%%%%%%%%%%%%%%%%%%%%%%%%%%%%%%%%%%%%%%%%%%%%%%%%%%%%%%%%%%%%%%%%

Let  $\Delta_j$ be the determinantal variety given by the zero locus of the minor $j\times j$ of $\left[\frac{\partial L_i}{\partial M_i}\right](P)$. 
 For compute the dimension of $\Omega,$ we will examine all possibilities of intersections between the $\Delta_{j}$'s and an irreducible component $\Omega_i$. By proposition \ref{dimf3}  the components such that $\Omega_i \subset \Delta_2$ does not contains points $P_x$ provided by a CC6BP. In this way, we will exclude such components of our analysis.  

%%%%%%%%%%%%%%%%%%%%%%%%%%%%%%%%%%%%%%%%%%%%%%%%%%%%%%%%%%%%%%%%%%%%%%%%%%%
%\begin{prop}
%If $\Omega_i \not \subset \Delta$ then  $\text{dim} \Omega_i \leq 5$
%\end{prop}
%\begin{proof}
%If $\Omega_i \not \subset \Delta$ we can consider the proper open subset $U=\Omega_i \setminus \Delta$ of $\Omega_i$. Note that for each $P\in U$ there exists a $3\times 3$ submatrix $M$ of $\left[\frac{\partial L_i}{\partial m_i}\right]$ such that $\text{det}(M(P))\neq 0$ 
%\end{proof}
%
%
%\begin{theorem}
%$\text{dim}\Omega\leq 5$
%\end{theorem}
%\begin{proof}
%aaa
%\end{proof}
%%%%%%%%%%%%%%%%%%%%%%%%%%%%%%%%%%%%%%%%%%%%%%%%%%%%%%%%%%%%%%%%%%%%%%%%%%%%%%%%%%%%%%%%%%%%%

\begin{lemma} \label{lemmaD2}
Let $1\leq k \leq 4$. If $\text{dim}(\overline{\pi_1(\Omega_i)})\leq k$ and $\Omega_i \not \subset \Delta_k$ then $\text{dim}(\Omega_i)\leq 5.$
\end{lemma}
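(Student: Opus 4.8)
The plan is to deduce the bound from the dimension of fibers theorem applied to the restriction $\pi_1\colon\Omega_i\to W_i$, where $W_i:=\overline{\pi_1(\Omega_i)}$, combined with the fibrewise inequality \eqref{dimfiber} furnished by the Jacobian criterion. Since $\Omega_i$ is irreducible and $\pi_1$ is regular, $W_i$ is an irreducible subvariety of $E$, and by hypothesis $\dim(W_i)\le k$. Moreover $\pi_1\colon\Omega_i\to W_i$ is dominant, so $\pi_1(\Omega_i)$ contains a nonempty open subset of $W_i$ and the dimension of fibers theorem yields a nonempty open set $U\subset W_i$, which we may assume contained in $\pi_1(\Omega_i)$, such that every fibre of $\pi_1|_{\Omega_i}$ over $U$ has dimension exactly $\dim(\Omega_i)-\dim(W_i)$.

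Next I would exploit the hypothesis $\Omega_i\not\subset\Delta_k$. By definition $\Delta_k$ is the vanishing locus of the $k\times k$ minors of $\left[\partial L_i/\partial M_i\right]$, so $P\in\Delta_k$ exactly when $\text{rank}\left(\left[\partial L_i/\partial M_i\right](P)\right)<k$. Hence $U':=\Omega_i\setminus\Delta_k$ is a nonempty open subset of $\Omega_i$ on which the rank is at least $k$. Because $\Omega_i$ is irreducible, the two nonempty open sets $U'$ and $\pi_1^{-1}(U)$ necessarily meet; I pick a point $P\in U'\cap\pi_1^{-1}(U)$ and set $\bar r:=\pi_1(P)$.

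Finally I would assemble the bound at this point $P$. Since $P\notin\Delta_k$ we have $\text{rank}\left(\left[\partial L_i/\partial M_i\right](P)\right)\ge k$, so \eqref{dimfiber} gives $\dim_P\left(\pi_1^{-1}(\bar r)\right)\le 5-k$, the fibre being taken inside $\Omega$. The fibre of $\pi_1|_{\Omega_i}$ over $\bar r$ is contained in $\pi_1^{-1}(\bar r)$, hence its dimension is also at most $5-k$; but since $\bar r\in U$ this fibre has dimension exactly $\dim(\Omega_i)-\dim(W_i)$. Therefore $\dim(\Omega_i)-\dim(W_i)\le 5-k$, and using $\dim(W_i)\le k$ we conclude $\dim(\Omega_i)\le\dim(W_i)+5-k\le k+(5-k)=5$.

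The delicate point is producing a single witness $P$ that simultaneously realises the rank lower bound, so that \eqref{dimfiber} is effective, and lies over a generic point of $W_i$, so that the dimension of fibers theorem gives the exact fibre dimension; this is precisely where the irreducibility of $\Omega_i$ is used, to guarantee that the relevant nonempty Zariski-open subsets intersect. One should also be careful that \eqref{dimfiber} controls the local dimension of the fibre inside the whole variety $\Omega$, so it is the containment of the $\Omega_i$-fibre in the $\Omega$-fibre that transfers the estimate to the component.
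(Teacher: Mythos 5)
Your argument is correct and follows essentially the same route as the paper: both proofs combine the dimension of fibers theorem for the projection to $\overline{\pi_1(\Omega_i)}$ with the Jacobian estimate \eqref{dimfiber}, using irreducibility of $\Omega_i$ to intersect the relevant nonempty open sets. The one organizational difference is worth flagging. The paper first passes to $\Omega_{i_0}=\Omega_i\setminus\Delta_k$ and applies the fiber theorem to a surjective restriction of $\pi_1$ defined on an open subset of $\Omega_{i_0}$; every point of every fiber it considers then has rank at least $k$, so \eqref{dimfiber} bounds the local dimension at \emph{all} points of the fiber and hence its global dimension. You instead apply the fiber theorem to all of $\Omega_i$ and produce a single witness $P\in(\Omega_i\setminus\Delta_k)\cap\pi_1^{-1}(U)$; to pass from the local bound $\dim_P\le 5-k$ at that one point to the global dimension of the fiber you need that generic fibers of a dominant map of irreducible varieties are equidimensional (equivalently, that every component of every nonempty fiber has dimension at least $\dim(\Omega_i)-\dim(\overline{\pi_1(\Omega_i)})$), which is a standard strengthening of the theorem as stated in the text but not literally contained in it. You correctly identify this as the delicate point; with that strengthening invoked explicitly, your proof is complete, and the paper's device of shrinking $\Omega_i$ before applying the fiber theorem is simply a way of avoiding the issue.
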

\begin{proof}Since $\Omega_i \not \subset \Delta_k$, there is non-empty subset $\Omega_{i_{0}}=\Omega_i\setminus \Delta_{k}\subset \Omega_i$.  So, we  restrict our attention to the projection  $\pi_1: \Omega_{i_{0}} \rightarrow \overline{\pi_1(\Omega_{i_{0}})}.$

There is irreducible component $W_j$ of $\overline{\pi_1(\Omega_{i_{0}})}$ and open subset $\Omega^{'}_{i_{0}}$ of $\Omega_{i_{0}}$ such that 
$\pi_1: \Omega^{'}_{i_{0}} \rightarrow W_{j}$ is dominant.  It follows that there are open subsets 
$\Omega^{''}_{i_{0}}\subset \Omega^{'}_{i_{0}}$ and $W^{'}_{j}\subset  W_{j}$ such that $\pi_1: \Omega^{''}_{i_{0}} \rightarrow W^{'}_{j}$ is surjective. By the dimension of fibers theorem there exists open subset $U\subset W^{'}_{j}$ such that 
\begin{equation*}
\dim(\Omega^{''}_{i_{0}})-\text{dim}(W^{'}_{j})=\text{dim}(\pi_1^{-1}(\bar{r}_{ij}))
\end{equation*}
for all  $(\bar{r}_{ij}) \in U$. $\Omega^{''}_{i_{0}}\cap \Delta_k=\emptyset$ implies
$$\text{rank}\left(\left[\frac{\partial L_i}{\partial m_i}\right](P)\right)\geq k,$$
for all $P \in \Omega^{''}_{i_{0}}$. Hence inequality $\eqref{dimfiber}$ yields $\text{dim}_{P}(\pi_1^{-1}(\bar{r}_{ij}))\leq 5- k$ for all $(\bar{r}_{ij})\in U$ and $P \in \Omega^{''}_{i_{0}}$. Thus  $\text{dim}(\pi_1^{-1}(\bar{r}_{ij}))\leq 5-k$ for every  $(\bar{r}_{ij})\in U$. We have $\text{dim}(W^{'}_{j})=\text{dim}(W_{j})\leq \text{dim}(\overline{\pi_1(\Omega_i)})\leq k.$ Therefore,
$$\dim(\Omega_{i})=\dim(\Omega^{''}_{i_{0}})=\text{dim}(W^{'}_{j})+\text{dim}(\pi_1^{-1}(\bar{r}_{ij}))\leq k+(5-k)=5.$$\end{proof}
%Note that if $P\in \Omega \cap \Delta_{j}$ then  
%\begin{equation}
%\dim_{P}\Omega \geq  
%\end{equation}

\begin{prop} \label{propD2}
If $\text{dim}(\overline{\pi_1(\Omega_i)})\leq 2$ then $\text{dim}(\Omega_i)\leq 5.$
\end{prop}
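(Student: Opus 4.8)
The plan is to obtain the statement directly from Lemma \ref{lemmaD2} with the choice $k=2$. From this point in the argument we restrict attention to the irreducible components $\Omega_i$ that contain a point $P_x$ associated to a CC6BP; the components contained in $\Delta_2$ have already been discarded, precisely because, by Proposition \ref{dimf3}, they carry no such point. Hence the only hypothesis of Lemma \ref{lemmaD2} that needs checking before I invoke it is $\Omega_i \not\subset \Delta_2$, the other hypothesis $\text{dim}(\overline{\pi_1(\Omega_i)})\le k$ being exactly the assumption $\text{dim}(\overline{\pi_1(\Omega_i)})\le 2$ of the proposition once we set $k=2$.

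First I would verify $\Omega_i \not\subset \Delta_2$. Take a point $P_x\in\Omega_i$ associated to a CC6BP. The computation in the proof of Proposition \ref{dimf3} exhibits a $2\times 2$ submatrix of $\left[\frac{\partial L_i}{\partial M_i}\right](P_x)$ of rank $2$, so $\text{rank}\left(\left[\frac{\partial L_i}{\partial M_i}\right](P_x)\right)\ge 2$; equivalently, some $2\times 2$ minor does not vanish at $P_x$, whence $P_x\notin\Delta_2$ and therefore $\Omega_i\not\subset\Delta_2$. I would also note that this reasoning is insensitive to the exact value of $\text{dim}(\overline{\pi_1(\Omega_i)})$: whether that dimension equals $0$, $1$, or $2$, the inequality $\text{dim}(\overline{\pi_1(\Omega_i)})\le 2$ holds, so Lemma \ref{lemmaD2} applies uniformly with $k=2$ and delivers $\text{dim}(\Omega_i)\le 5$.

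The point of the argument is conceptual rather than computational: the rank bound $\ge 2$ furnished by Proposition \ref{dimf3} is exactly strong enough to close the fiber-dimension trade-off at the threshold $2+(5-2)=5$ whenever $\text{dim}(\overline{\pi_1(\Omega_i)})\le 2$. The genuine difficulty is deferred to the remaining cases $\text{dim}(\overline{\pi_1(\Omega_i)})=3,4$, which would instead require $\Omega_i\not\subset\Delta_3$ or $\Omega_i\not\subset\Delta_4$, i.e. a rank strictly larger than $2$ along $\Omega_i$; such sharper information is not provided by Proposition \ref{dimf3} and must be extracted from the partial Groebner basis computation of Proposition \ref{lemmaD3}. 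For the present proposition, however, no such refinement is needed, and the proof reduces to the single application of Lemma \ref{lemmaD2} described above.
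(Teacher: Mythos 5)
Your proof is correct and follows essentially the same route as the paper: the paper likewise discards the components contained in $\Delta_2$ on the strength of Proposition \ref{dimf3} (since such components carry no point associated to a CC6BP) and then invokes Lemma \ref{lemmaD2} with $k=2$. Your version merely makes explicit the rank-$\geq 2$ computation underlying that exclusion, which the paper leaves implicit in its reference to Proposition \ref{dimf3}.
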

\begin{proof}
By proposition \ref{dimf3} we can suppose without generality that $\Omega_i \not \subset \Delta_{k}$, $k=1 \text{~or~} 2$. The result follows immediately of lemma \ref{lemmaD2}.
\end{proof}

\begin{lemma}\label{lemmaD3}
If $\Omega_i \subset \Delta_3$ then $\text{dim}(\overline{\pi_1(\Omega_i)})\leq 2$. In particular, $\text{dim}(\Omega_i)\leq 5.$ 
\end{lemma}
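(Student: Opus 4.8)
The plan is to transfer the rank condition that defines $\Delta_3$ onto the mutual distances space $\mathbb{C}^{11}$ and then bound the dimension of the resulting variety by a Groebner basis computation, exactly the relaxation strategy announced in the introduction and formalized in Lemma \ref{PLT}. First I would make the hypothesis explicit: $\Omega_i \subset \Delta_3$ means that every $3\times 3$ minor of the matrix $\left[\frac{\partial L_i}{\partial M_i}\right]$ of Proposition \ref{dimf3} vanishes identically on $\Omega_i$. These minors are polynomials in the $S_{ijk}$ and the $R_{ij}$. Since $\Omega_i \subset \Omega = \widetilde V\setminus D$, the relations $Z_{ij}=0$ and $W_i=0$ of Proposition \ref{formulation} hold on $\Omega_i$, and as every $R_{ij}\neq 0$ there, they give $S_{ijk}=R_{ij}^{-3}-R_{jk}^{-3}$. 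Substituting these expressions into each minor and clearing denominators yields polynomials $\tilde\mu_\alpha\in\mathbb{C}[R_{ij}]$ that vanish on $\pi_1(\Omega_i)$, hence on its closure.

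Together with $F_1,\dots,F_8$, which already lie in $\mathcal{I}(\overline{\pi_1(\Omega_i)})$ by Proposition \ref{P1}, the polynomials $\tilde\mu_\alpha$ give
$$\overline{\pi_1(\Omega_i)}\ \subset\ Z(J),\qquad J=\langle F_1,\dots,F_8,\ \tilde\mu_\alpha\rangle\subset\mathbb{C}[R_{ij}].$$
By monotonicity of dimension under inclusion it therefore suffices to prove $\dim Z(J)\leq 2$. To do this I would fix a graded monomial order on $\mathbb{C}[R_{ij}]$ and run a Groebner basis computation for $J$ in Singular, but only far enough to exhibit a set $F=\{f_1,\dots,f_k\}$ of monomials with $F\subset\text{LT}(J)$ and $\dim Z(F)=2$. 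Lemma \ref{PLT} then gives $\dim Z(J)\leq 2$, and consequently $\dim\bigl(\overline{\pi_1(\Omega_i)}\bigr)\leq 2$, which is the first assertion of the lemma.

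For the ``in particular'' clause I would invoke Lemma \ref{lemmaD2} with $k=2$. A component $\Omega_i$ is relevant only when it contains a point $P_x$ arising from a genuine CC6BP; for such a point Proposition \ref{dimf3} shows that the rank of $\left[\frac{\partial L_i}{\partial M_i}\right]$ is at least $2$, so $P_x\notin\Delta_2$ and hence $\Omega_i\not\subset\Delta_2$. Combining $\dim\bigl(\overline{\pi_1(\Omega_i)}\bigr)\leq 2$ with $\Omega_i\not\subset\Delta_2$, Lemma \ref{lemmaD2} yields $\dim(\Omega_i)\leq 5$, completing the argument.

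The main obstacle is the Groebner basis step. The generators $\tilde\mu_\alpha$ are high-degree polynomials in the eleven variables $R_{ij}$, produced by clearing the cubic denominators coming from the forty $3\times 3$ minors, so a complete Groebner basis of $J$ is not feasible in practice. This is precisely the difficulty that Lemma \ref{PLT} is designed to bypass: one needs only enough leading monomials to cut the coordinate space down to dimension $2$, and the delicate, computer-aided part of the proof is exhibiting an explicit monomial set $F\subset\text{LT}(J)$ with $\dim Z(F)=2$.
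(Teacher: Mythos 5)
Your proposal is correct and follows essentially the same route as the paper: the same substitution of $S_{ijk}=R_{ij}^{-3}-R_{jk}^{-3}$ into the forty $3\times 3$ minors, the same ideal $J$ in $\mathbb{C}[R_{ij}]$, the same appeal to Lemma \ref{PLT} for the bound $\dim Z(J)\leq 2$, and the same use of Proposition \ref{dimf3} plus Lemma \ref{lemmaD2} with $k=2$ for the ``in particular'' clause. The only (implementation-level) difference is how the partial set of leading terms is produced: the paper computes complete Groebner bases of the forty smaller ideals $J_i=\langle F_1,\dots,F_8,D_i\rangle$ and collects their leading monomials, which automatically lie in $\text{LT}(J)$ since $J_i\subset J$, rather than truncating a single Groebner computation for $J$ itself.
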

\begin{proof}
In this case $\mathcal{I}(\Delta_3)\subset \mathcal{I}(\Omega_i).$  The ideal $\mathcal{I}(\Delta_3)$
is generated by the $40$ minors of order $3$ of the Jacobian matrix $\left[\frac{\partial L_i}{\partial m_i}\right].$
For fixing ideas, we consider the minor $D^{123}_{123}$ correspondent to the rows $1,2,3$ and columns $1,2,3$:
$$D^{123}_{123}=\left|\begin{array}{rrr}
0 & - S_{215} R_{12} & - S_{315} R_{13} \\
S_{125} R_{12} & 0 & - S_{325} R_{23} \\
S_{135} R_{13} & R_{235} R_{23} & 0
\end{array}\right|.$$
The  computation of the determinant yields the formula 
\begin{align}\label{D123}
D_{123}^{123}&=- S_{125} S_{235} S_{315} R_{12} R_{13} R_{23} + S_{135} S_{215} S_{325} R_{12} R_{13} R_{23}.
\end{align}
Substituting the equations $S_{ijk}=R_{ij}^{-3}-R_{jk}^{-3}$ on $\eqref{D123}$ and after clearing the denominators of the resulting expression
we obtain the polynomial
$$D_1=(R_{25}^3-R_{12}^3)(R_{35}^3-R_{23}^3)(R_{15}^3-R_{13}^3)-(R_{35}^3-R_{13}^3)(R_{15}^3-R_{12}^3)(R_{25}^3-R_{23}^3).$$ 
Note that $D_1\in \mathcal{I}(\overline{\pi_i(\Omega_i)})$ since it depends only of the mutual distances and  $D_1\in \mathcal{I}(\Omega_i)$. Repeating this process for all generators of $\mathcal{I}(\Delta_3)$ we get the polynomials denoted by $D_1,...,D_{40}\in \mathcal{I}(\overline{\pi_i(\Omega_i)}).$ 
Consider the ideal $J$  generated by  $D_1,...D_{40}$ and the polynomials $F_1,...,F_8$ defined on proposition \ref{formulation}. Note that $J$  is contained $\mathcal{I}(\pi_i(\Omega_i))$. Hence $Z(\mathcal{I}(\overline{\pi_i(\Omega_i))})=\overline{\pi_i(\Omega_i)}\subset Z(J)$ implies
$$\text{dim}(\overline{\pi_i(\Omega_i)})\leq \text{dim}(Z(J)).$$

To this end it is sufficient to proof  $\text{dim}(Z(J))\leq 2$. The computation of a Groebner basis for the ideal $J$ is an arduous  task because it means to obtain a complete list of generators for the ideal $LT(J)$.  The main observation is that we do not need to compute a whole Groebner basis for the ideal $J$. We need to compute only a sufficient number of leading terms to obtain the desired upper bound for the dimension of $J$. In the following, we describe the simple procedure based on lemma \ref{PLT}   used to obtain a ``partial Groebner basis'' for de ideal $J$. 
%We recall that if $\{G_1,...,G_k\}$ is a Groebner basis for $J$ for some monomial order, then  the ideal of leading terms of $J$ is given by $LT(J)=(LT(G_1),...,LT(G_k))$, where $LT(G_i)$ denotes the leading term of the polynomial $G_i.$ Therefore, the ideals $J$ and $LT(J)$ have the same Hilbert polynomial and, in particular, have the same dimension.  The main observation is that we do not need all the leading terms of the ideal $J$. More precisely, we compute a  sufficient number of leading terms to obtain an upper bound $\text{dim}(Z(J))\leq 2.$

Define the  ideals $J_i=\langle S_1,...,S_8,D_i \rangle$ for which $i=1,...,40.$ Computing the Groebner basis of $J_i$,  with respect to the degree reverse  lexicographic ordering, and collecting its leading terms  we obtain the ideal $K$ the following monomials   
\begin{align*}
&R_{23}, R_{13}, R_{12}, R_{45}^2, R_{34}^2, R_{24}^2, R_{14}^2, R_{24}R_{25}^5R_{35}^3,\\
& R_{14}R_{15}^5R_{35}^3, R_{14}R_{15}^5R_{25}^3, R_{25}^6R_{34}R_{35}^3, R_{15}^6R_{34}R_{35}^3,\\
& R_{15}^6R_{24}R_{25}^3, R_{24}R_{25}^5R_{34}R_{35}^2R_{56}^2, R_{14}R_{15}^5R_{34}R_{35}^2R_{56}^2,\\
& R_{14}R_{15}^5R_{24}R_{25}^2R_{56}^2, R_{15}^6R_{25}^4R_{34},R_{25}^6R_{35}^4R_{56}^2, R_{15}^6R_{35}^4R_{56}^2,\\
& R_{15}^6R_{25}^4R_{56}^2, R_{15}^6R_{25}^4R_{35}^2.
\end{align*} 

 We notice that it is possible to obtain this list of leading terms on a notebook with 16GB of memory in a few minutes.
It is easy to check with the software Singular that $\text{dim}(Z(K))=2$.  Since $K \subset LT(J)$, by the lemma \ref{PLT}
$$\text{dim}(\overline{\pi_1(\Omega_i)})\leq \text{dim}(Z(J))=\text{dim}(Z(LT(J)))\leq \text{dim}(Z(K))=2.$$

Observe that $\text{dim}(\overline{\pi_1(\Omega_i)})\leq 2$ and all components of $\Omega$ satisfies $\Omega_i \not \subset \Delta_{2}$. For finish this proof we apply  lemma \ref{lemmaD2} to conclude that $\text{dim}(\Omega_i)\leq 5.$  
\end{proof}

%$r_{12}^3r_{25}^3r_{23}^3r_{35}^3r_{13}^3r_{15}^3D_{1}$ is equal to
%$$\tilde{G}_1=[(r_{25}^3-r_{12}^3)(r_{35}^3-r_{23}^3)(r_{15}^3-r_{13}^3)-(r_{35}^3-r_{13}^3)(r_{15}^3-r_{12}^3)(r_{25}^3-r_{23}^3)]r_{12}r_{13}r_{23}$$
%and $r_{23}^3r_{35}^3r_{34}^3r_{45}^3r_{24}^3r_{25}^3D_2$ is equal to
%$$\tilde{G}_2=[(r_{35}^3-r_{23}^3)(r_{45}^3-r_{34}^3)(r_{25}^3-r_{24}^3)-(r_{45}^3-r_{24}^3)(r_{25}^3-r_{23}^3)(r_{35}^3-r_{34}^3)]r_{23}r_{24}r_{34}$$
%Note that $F_1$ and $F_2 \in \mathcal{I}(\Delta_3)\cap \mathbb{C}[r_{ij}]\subset \mathcal{I}(\Omega_i)\cap \mathbb{C}[r_{ij}]$. Moreover,
%since $\Omega_i$ is irreducible and by definition of $\Omega$ we have  $\Omega_i \cap Z(]r_{12}r_{13}r_{23})\cup Z(r_{23}r_{24}r_{34})=\emptyset$, hence 
%$$G_1=(r_{25}^3-r_{12}^3)(r_{35}^3-r_{23}^3)(r_{15}^3-r_{13}^3)-(r_{35}^3-r_{13}^3)(r_{15}^3-r_{12}^3)(r_{25}^3-r_{23}^3)$$ 
%and 
%$$G_2=(r_{35}^3-r_{23}^3)(r_{45}^3-r_{34}^3)(r_{25}^3-r_{24}^3)-(r_{45}^3-r_{24}^3)(r_{25}^3-r_{23}^3)(r_{35}^3-r_{34}^3)$$
%belong to 
%$\mathcal{I}(\Omega_i)\cap \mathbb{C}[r_{ij}]$. it is easy to see that the polynomials $F_1,...,F_8$ as in the proposition \ref{formulation} also belong to $\mathcal{I}(\Omega_i)\cap \mathbb{C}[R_{ij}]$. In this way by the closure theorem $\overline{\pi_1(\Omega_i)})\subset Z(J)$, where $J$ is the ideal generated by $F_1,...,F_8,G_1,G_2$. Computing the Grobner basis of $J$ we obtain that $\text{dim}(Z(J))=2$. This follows our result. 

\begin{prop} \label{propD3}
If $\text{dim}(\overline{\pi_1(\Omega_i)})=3$ then $\text{dim}(\Omega_i)\leq 5.$
\end{prop}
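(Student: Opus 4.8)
The plan is to argue by a simple dichotomy on whether the irreducible component $\Omega_i$ sits inside the determinantal variety $\Delta_3$ or not, reducing in each case to one of the lemmas already proved. The standing hypothesis $\dim(\overline{\pi_1(\Omega_i)}) = 3$ will be used only to license the application of Lemma \ref{lemmaD2} at the value $k = 3$.

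First I would dispose of the case $\Omega_i \not\subset \Delta_3$. Since $\dim(\overline{\pi_1(\Omega_i)}) = 3 \leq 3$, both hypotheses of Lemma \ref{lemmaD2} with $k = 3$ are met, and the lemma immediately yields $\dim(\Omega_i) \leq 5$. The engine behind this is the fiber estimate \eqref{dimfiber}: on the complement of $\Delta_3$ some $3 \times 3$ minor of $[\partial L_i/\partial M_i]$ is nonzero, so the rank is at least $3$ there and the generic fibers of $\pi_1$ restricted to $\Omega_i$ have dimension at most $5 - 3 = 2$; the dimension of fibers theorem then combines this with $\dim(\overline{\pi_1(\Omega_i)}) = 3$ to give the bound $3 + 2 = 5$.

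In the remaining case $\Omega_i \subset \Delta_3$, the hypothesis of Lemma \ref{lemmaD3} holds verbatim, and that lemma already provides $\dim(\Omega_i) \leq 5$. I would note in passing that Lemma \ref{lemmaD3} in fact forces $\dim(\overline{\pi_1(\Omega_i)}) \leq 2$, so this case is actually incompatible with the assumption $\dim(\overline{\pi_1(\Omega_i)}) = 3$; but this observation is not needed, since the desired bound is already in hand. As the two alternatives are exhaustive, combining them completes the proof.

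I anticipate no genuine obstacle: all the analytic content has been absorbed into Lemmas \ref{lemmaD2} and \ref{lemmaD3}, and the present proposition is merely the bookkeeping step that instantiates them at $k = 3$. The only point deserving a moment's care is to verify that Lemma \ref{lemmaD2} is being applied with the (correct) non-strict inequality $\dim(\overline{\pi_1(\Omega_i)}) \leq 3$, which is immediate from the equality in the hypothesis.
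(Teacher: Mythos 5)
Your proof is correct and follows essentially the same route as the paper: the paper invokes Lemma \ref{lemmaD3} in contrapositive form to conclude $\Omega_i \not\subset \Delta_3$ and then applies Lemma \ref{lemmaD2} with $k=3$, while you phrase this as a dichotomy whose second branch you correctly observe is either vacuous or already covered by Lemma \ref{lemmaD3}. The two arguments use the same lemmas with the same instantiation and differ only in bookkeeping.
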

\begin{proof}
By lemma \ref{lemmaD3}  $\Omega_i \not \subset \Delta_3$. The result follows of lemma \ref{lemmaD2}
\end{proof}

\begin{lemma}\label{crucial} There is $y$ a CC6BP such that 
$\text{rank}\left(\left[\frac{\partial L_i}{\partial m_i}\right]\right)(P_{y})=4$.
\end{lemma}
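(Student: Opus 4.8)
The plan is to produce the required configuration explicitly by reusing the CC6BP constructed in Proposition \ref{importantexample}. Let $y$ be that configuration and $P_y$ the associated point, so that $m_1=m_2=m_3=m_4=1$, $m_5=m_6$, and all mutual distances are given through the single parameter $\tilde r_{12}$ by $r_{14}=2$, $r_{13}=r_{24}=2-\tilde r_{12}$, $r_{34}=\tilde r_{12}$, $r_{23}=r_{56}=2-2\tilde r_{12}$, $r_{25}=r_{35}=\sqrt{2}\,(1-\tilde r_{12})$ and $r_{15}=r_{45}=\sqrt{1+(1-\tilde r_{12})^2}$. The key observation is that the equations $L_1,\dots,L_4$ are linear in the masses, so every entry of $\left[\frac{\partial L_i}{\partial M_i}\right](P_y)$ depends only on the mutual distances of $y$ through the quantities $s_{ijk}=r_{ij}^{-3}-r_{jk}^{-3}$; hence each entry is an explicit algebraic function of $\tilde r_{12}$. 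I would then fix the $4\times 4$ minor $D$ formed by the four rows indexed by $M_1,M_2,M_3,M_4$ (thereby avoiding the last row) and prove $D(P_y)\neq 0$, which forces the rank of the full $5\times 4$ matrix to equal $4$.

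First I would substitute the distance relations above to obtain $D$ as a function of $\tilde r_{12}$ alone. Evaluating at the truncated root $t_{12}=0.440241852870668$ from Proposition \ref{importantexample} gives a value of order $10^{2}$, comfortably bounded away from zero. To turn this into a rigorous statement I would copy the estimate already used in Proposition \ref{importantexample}: Sturm's theorem localizes the true root by $|\tilde r_{12}-t_{12}|<10^{-10}$, and since $D$ is smooth on $(0,1)$ the mean value theorem gives $|D(\tilde r_{12})-D(t_{12})|=|D'(c)|\,|\tilde r_{12}-t_{12}|$ for some $c$ between the two points. On the tiny interval around $t_{12}$ every distance lies in $[\,0.4,\,2\,]$ and every inverse cube together with its derivative is of moderate size, so a crude bound on $|D'(c)|$ makes the right-hand side smaller than $10^{-6}$, whence $D(P_y)\neq 0$ with room to spare.

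An alternative, fully symbolic, finish would clear denominators so that $D$ becomes a polynomial $P(R_{12})\in\mathbb{C}[R_{12}]$; since $\tilde r_{12}$ is the unique root in $(0,1]$ of the polynomial $h$ of Proposition \ref{importantexample}, it suffices to check in Singular that $\gcd(P,h)=1$ (equivalently that the resultant of $P$ and $h$ is nonzero), which certifies $P(\tilde r_{12})\neq 0$ exactly. The only real difficulty is this last passage from the floating-point value $D(t_{12})$ to the exact nonvanishing $D(\tilde r_{12})\neq 0$; but because $|D(t_{12})|$ is of order $10^{2}$ while the localization error is of order $10^{-10}$, the mean value estimate — identical in spirit to the leading-coefficient argument already carried out in Proposition \ref{importantexample} — closes it comfortably. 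With $D(P_y)\neq 0$ in hand, $\left[\frac{\partial L_i}{\partial M_i}\right](P_y)$ has a nonvanishing $4\times 4$ minor, so its rank is $4$, which is exactly the assertion of the lemma.
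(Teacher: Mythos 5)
Your proposal follows essentially the same route as the paper's proof: both select the $4\times 4$ minor on the rows indexed by $M_1,\dots,M_4$, express it through $s_{ijk}=r_{ij}^{-3}-r_{jk}^{-3}$ as a function of the single parameter $\tilde r_{12}$ from Proposition \ref{importantexample}, evaluate at the truncated root $t_{12}$ (the paper obtains $\beta(t_{12})=11.25\ldots\sqrt{2}-233.17\ldots$, matching your ``order $10^2$'' claim), and close the gap with a mean value theorem estimate against the Sturm localization $|\tilde r_{12}-t_{12}|<10^{-10}$. Your alternative exact finish via $\gcd$/resultant with the minimal polynomial $h$ is a worthwhile addition, but the core argument is the one in the paper.
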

\begin{proof}
Let $y$ be the CC6BP that we find in the proposition \ref{importantexample} and $P_y$ the point of $\mathbb{C}^{32}$ associated to $y$. We will denote the mutual distances of the configuration $y$ by $\tilde{r}_{ij}$.

Consider the following submatrix of $\left[\frac{\partial L_i}{\partial M_i}\right]$:
$$A=\left(\begin{array}{rrrr}
0 & -S_{215} R_{12} & -S_{315} R_{13} & S_{415} R_{14} \\
S_{125} R_{12} & 0 & -S_{325} R_{23} & S_{425} R_{24} \\
S_{135} R_{13} & S_{235} R_{23} & 0 & S_{435} R_{34} \\
S_{145} R_{14} & S_{245} R_{24} & S_{345} R_{34} & 0
\end{array}\right).$$
Using the equations $S_{ijk}=R_{ij}^{-3}-R_{jk}^{-3}$ we can express the determinant  $|A|$ as a rational function in the mutual distances. The denominator of $|A|$ is given by $R_{12}^4R_{13}^4R_{14}^4R_{15}^3R_{23}^4R_{24}^4R_{25}^3R_{34}^4R_{35}^3R_{45}^3$, hence is non-zero when it is evaluated on the mutual distances associated to $y$. Let $\beta(R_{ij})$ the numerator of $|A|$ written as a real function of the $R_{ij}$ variables . In order to obtain the result it is sufficient to proof that $\beta(\tilde{r}_{ij})\neq 0.$

All the mutual distances associated to $y$ can be written only in terms of $\tilde{r}_{12}:$
\begin{align*}
&\tilde{r}_{13}=2-\tilde{r}_{12},\tilde{r}_{14}=2,\tilde{r}_{15}=\sqrt{1+(1-\tilde{r}_{12})^2}, \tilde{r}_{23}=2-2\tilde{r}_{12},\tilde{r}_{24}=2-\tilde{r}_{12},\\
&\tilde{r}_{25}=\sqrt{2(1-\tilde{r}_{12})^2}, \tilde{r}_{34}=\tilde{r}_{12},\tilde{r}_{35}=\tilde{r}_{25},\tilde{r}_{45}=\tilde{r}_{15},\tilde{r}_{56}=2-2\tilde{r}_{12}^2.
\end{align*}
Therefore, for our purposes, we can take $\beta$ as a function of $R_{12}$ defined in a small neighborhood of $\tilde{r}_{12}$. Let $t_{12}=0.440241852870668$ be the truncated value of $\tilde{r}_{12}$ with accuracy of $10$ decimal cases obtained in proposition \ref{importantexample}. The value of $\beta(t_{12})$ is exactly
$$\beta(t_{12})=11.2514100393576\sqrt{2} - 233.179777444682.$$
Using an argument based on the middle value theorem similar to the one given in the proposition \ref{importantexample} we can to prove that the error committed in the computation of $\beta(\tilde{r}_{ij})$ above is less than or equal to $10.$ Hence, $|A|\neq 0$ and   
$\text{rank}\left(\left[\frac{\partial L_i}{\partial m_i}\right]\right)(P_{y})=4$.
\end{proof}

\begin{prop}\label{propD4}
If $\text{dim}(\overline{\pi_1(\Omega_i)})=4$ then $\text{dim}(\Omega_i)\leq 5.$
\end{prop}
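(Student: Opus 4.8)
The plan is to reduce the statement to a single application of Lemma \ref{lemmaD2} with $k=4$, so that the only genuine work is to verify the hypothesis $\Omega_i \not\subset \Delta_4$. First I would note that the projection is forced to be dominant: by Proposition \ref{P1} we have $\overline{\pi_1(\Omega_i)} \subset E$, and since $\dim(\overline{\pi_1(\Omega_i)}) = 4 = \dim(E)$ with $E$ irreducible, the closed subset $\overline{\pi_1(\Omega_i)}$ of full dimension must equal $E$. Hence $\pi_1\colon \Omega_i \to E$ is dominant, and (as $\Omega_i$ is irreducible) its image contains a nonempty, hence dense, Zariski-open subset of $E$.

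Next I would regard the rank of $\left[\frac{\partial L_i}{\partial M_i}\right]$ as a function on $E$. On $\Omega$ the relations $B_{ijk}=0$ force $S_{ijk}=R_{ij}^{-3}-R_{jk}^{-3}$, so after this substitution every entry of the matrix depends only on the mutual distances. Therefore the locus in $E$ where all $4\times 4$ minors vanish (i.e.\ where the rank drops below $4$) is Zariski-closed in $E$, and its complement $R=\{\bar r\in E:\operatorname{rank}\left[\frac{\partial L_i}{\partial M_i}\right]=4\}$ is open. The decisive input is Lemma \ref{crucial}: the point $P_y$ associated to the explicit CC6BP of Proposition \ref{importantexample} lies in $\Omega$ (by Proposition \ref{formulation}) and has rank $4$, so $R\neq\emptyset$; since $E$ is irreducible, the nonempty open set $R$ is dense in $E$.

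I would then intersect the two dense open sets. Both $\pi_1(\Omega_i)$ (which contains a nonempty open subset of $E$) and $R$ are nonempty open subsets of the irreducible variety $E$, so they meet: there is $\bar r\in \pi_1(\Omega_i)\cap R$. Picking any $P\in\Omega_i$ with $\pi_1(P)=\bar r$ gives a point of $\Omega_i$ at which some $4\times 4$ minor is nonzero, that is $P\notin\Delta_4$, whence $\Omega_i\not\subset\Delta_4$. With $\dim(\overline{\pi_1(\Omega_i)})\le 4$ and $\Omega_i\not\subset\Delta_4$ established, Lemma \ref{lemmaD2} with $k=4$ immediately gives $\dim(\Omega_i)\le 5$.

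I expect the only delicate point to be the nonvanishing supplied by Lemma \ref{crucial}: it is exactly what prevents the matrix $\left[\frac{\partial L_i}{\partial M_i}\right]$ from dropping rank identically on a component that projects onto all of $E$, and it is here that the explicit configuration of Proposition \ref{importantexample} together with its numerical middle-value-theorem estimate does the heavy lifting. The remaining ingredients are formal: the dimension-of-fibers bookkeeping already packaged inside Lemma \ref{lemmaD2}, and the irreducibility of $E$ recorded after Proposition \ref{P1}, which is what lets a single rank-$4$ witness spread to a dense open set meeting $\pi_1(\Omega_i)$.
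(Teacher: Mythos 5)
Your proof is correct and follows essentially the same route as the paper: identify $\overline{\pi_1(\Omega_i)}=E$ by the dimension count, use the rank-$4$ witness $P_y$ from Lemma \ref{crucial} together with the irreducibility of $E$ to produce a point of $\pi_1(\Omega_i)$ over which $\Omega_i$ escapes $\Delta_4$, and conclude with Lemma \ref{lemmaD2} for $k=4$. The only (minor) difference is that you realize the rank-drop locus as a Zariski-closed subset of $E$ by substituting $S_{ijk}=R_{ij}^{-3}-R_{jk}^{-3}$, whereas the paper takes the complement of the projected set $\tilde{\pi}_1(\Delta_4)$; your variant is in fact slightly cleaner, since the image of a projection need not be closed.
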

\begin{proof}
Consider the projection $\pi_1: \Omega_i \rightarrow E$. Since $\text{dim}(\overline{\pi_1(\Omega_i)})=4$, $\pi_1(\Omega_i)\subset E$,  and $\text{dim}(E)=4$ we get $\overline{\pi_1(\Omega_i)}=E$. In particular $\pi_1(\Omega_i)$ is dense on $E$ and it contains a non-empty open subset $\widetilde{U}$ of $E$.   Let $y$ be the CC6BP obtained in the proposition \ref{importantexample} and $\tilde{r}_{ij}$ its associated mutual distances and $P_{y}=\pi_{1}^{-1}(\tilde{r}_{ij}).$ Proposition \ref{crucial} implies  $\text{rank}\left(\left[\frac{\partial L_i}{\partial m_i}\right]\right)(P_{y})=4$, and consequently  $P_y \not \in \Delta_4$. Define the projection $\tilde{\pi}: \Delta_4\rightarrow \mathbb{C}^{11}$ and take the open set $U= \mathbb{C}^{11} \setminus\tilde{\pi}_{1}(\Delta_4)$. Note that $P_y \in E \cap U$, therefore the relative open set of $E$ given by $U^{'}=E \cap U$ is non-empty.  Since the intersection between two non-empty open subsets of a irreducible quasi-affine algebraic variety is ever non-empty, we obtain a point $(s_{ij})\in U^{'} \cap \widetilde{U}$. Observe that $\pi_1^{-1}(s_{ij})\in \Omega_i \setminus \Delta_4$. Hence $\Omega_i \not \subset \Delta_4$ and the result follows of lemma $\ref{lemmaD2}$.

%By equation $\eqref{dimfiber}$ we get 
%%$$\text{dim}(\pi_{1}^{-1}(\tilde{r}_{ij}))=5-\text{rank}\left(\left[\frac{\partial L_i}{\partial m_i}\right]\right)(P_{y})= 1.$$ 
%%
%By the upper semi-continuity of fiber dimension we get a nonempty proper open subset $U \subset S$ such that $\text{dim}(\pi^{-1}(Q))\leq 1$ for all $Q \in U.$ By the Fiber dimension theorem $\text{dim}(\Omega_i)\leq 5.$
\end{proof}

\begin{theorem}
$\text{dim}(\Omega) \leq 5.$
\end{theorem}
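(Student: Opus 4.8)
The plan is to reduce the global dimension bound to the per-component estimates already established, using the fact recalled in Section~\ref{ARG} that the dimension of an algebraic variety equals the maximum of the dimensions of its irreducible components. First I would decompose $\Omega$ into its finitely many irreducible components $\Omega = \Omega_1 \cup \cdots \cup \Omega_l$, so that $\text{dim}(\Omega) = \max_i \text{dim}(\Omega_i)$. It then suffices to prove $\text{dim}(\Omega_i) \leq 5$ for every index $i$, and the whole theorem becomes a matter of assembling the results of Section~\ref{FN}.

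Next I would analyze each component through the restriction of the projection $\pi_1 : \Omega_i \to E$. By Proposition~\ref{P1} the image lands in $E$, whose dimension was computed to be $4$ (with $\mathcal{I}(H)$ primary, so that $E$ is irreducible); hence $\overline{\pi_1(\Omega_i)}$ is a subvariety of $E$ and its dimension is one of $0,1,2,3,4$. These five values partition into exactly the three ranges treated by the propositions already proved, so no further geometric input is needed at this stage.

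The final step is a routine case split over $\text{dim}(\overline{\pi_1(\Omega_i)})$. If $\text{dim}(\overline{\pi_1(\Omega_i)}) \leq 2$, Proposition~\ref{propD2} gives $\text{dim}(\Omega_i) \leq 5$. If it equals $3$, Proposition~\ref{propD3} gives the same bound. If it equals $4$, Proposition~\ref{propD4} applies, invoking the explicit configuration $y$ of Proposition~\ref{importantexample}, whose associated point has maximal Jacobian rank by Lemma~\ref{crucial}, to guarantee $\Omega_i \not\subset \Delta_4$ and thereby reduce to Lemma~\ref{lemmaD2}. Since these three cases are exhaustive, each component satisfies $\text{dim}(\Omega_i) \leq 5$, and taking the maximum yields $\text{dim}(\Omega) \leq 5$.

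As for the main obstacle: there is essentially no new difficulty in this concluding theorem, because all of the analytic and computational content has been front-loaded into the auxiliary results. The genuinely delicate ingredients are the computation $\text{dim}(E)=4$ together with the primality of $\mathcal{I}(H)$, the fiber-dimension estimate~\eqref{dimfiber} coming from the block-triangular structure of the Jacobian, and the partial Groebner basis argument of Lemma~\ref{lemmaD3}. The synthesis I describe here only requires verifying that the three dimension ranges cover all possibilities and that each one triggers the correct proposition, so the proof of the final statement is short.
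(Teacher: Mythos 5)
Your proposal is correct and follows essentially the same route as the paper: decompose $\Omega$ into irreducible components, case-split on $\text{dim}(\overline{\pi_1(\Omega_i)})\in\{0,1,2,3,4\}$, and invoke Propositions \ref{propD2}, \ref{propD3}, and \ref{propD4} to bound each component by $5$. The only difference is that you spell out the reduction to components and the exhaustiveness of the cases more explicitly than the paper does.
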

\begin{proof}
We classify the components $\Omega_i$ of $\Omega$ in terms of $\text{dim}(\overline{\pi(\Omega_i)}).$ For all cases 
propositions \ref{propD2}, \ref{propD3} and \ref{propD4} imply $\text{dim}(\Omega_i)\leq 5$. 
\end{proof}

\begin{theorem}
Consider the projection $\pi_2:\Omega \rightarrow \mathbb{C}^5$. There is a proper closed  subset $\tilde{B}$ of the mass space $\mathbb{C}^5$ such that the if $(m_i)=(m_1,...,m_5) \in \mathbb{C}^5\setminus \tilde{B}$ then $\pi_2^{-1}(m_i)$ is finite.
\end{theorem}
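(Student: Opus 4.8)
The plan is to apply the dimension of fibers theorem to the projection $\pi_2$, exploiting the bound $\dim(\Omega)\leq 5$ just established together with the facts that the mass space $\mathbb{C}^5$ has dimension $5$ and that an algebraic variety is finite precisely when its dimension is $0$. First I would decompose $\Omega$ into its finitely many irreducible components $\Omega=\Omega_1\cup\cdots\cup\Omega_l$ and treat each component separately, since
\begin{equation*}
\pi_2^{-1}(m_i)=\bigcup_{j=1}^{l}\big(\pi_2^{-1}(m_i)\cap\Omega_j\big),
\end{equation*}
and a finite union of finite sets is finite. For a fixed component $\Omega_i$, let $W_i=\overline{\pi_2(\Omega_i)}$ be the closure of its image; as the closure of the image of an irreducible variety under a regular map, $W_i$ is irreducible.

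I would then split into two cases. If $\dim(W_i)<5$, then $W_i$ is a proper closed subset of $\mathbb{C}^5$, and I would place $W_i$ into $\tilde{B}$: for any mass vector $(m_i)$ outside $W_i\supset\pi_2(\Omega_i)$ the fiber $\pi_2^{-1}(m_i)\cap\Omega_i$ is empty, hence finite. In the remaining case $W_i=\mathbb{C}^5$, the restriction $\pi_2|_{\Omega_i}$ is dominant, so its image contains a non-empty open subset $U_i'\subset\mathbb{C}^5$. Restricting to the open subvariety $\Omega_i'=\pi_2^{-1}(U_i')\cap\Omega_i$, which has the same dimension as $\Omega_i$, turns $\pi_2$ into a surjective regular morphism $\pi_2:\Omega_i'\to U_i'$ between irreducible varieties, exactly the setup already used in the proof of Lemma \ref{lemmaD2}. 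The dimension of fibers theorem then furnishes a non-empty open subset $U_i\subset U_i'$ on which
\begin{equation*}
\dim\big(\pi_2^{-1}(m_i)\cap\Omega_i\big)=\dim(\Omega_i)-\dim(U_i')=\dim(\Omega_i)-5\leq 5-5=0,
\end{equation*}
so these fibers are finite; here I use that for $(m_i)\in U_i\subset U_i'$ one has $\pi_2^{-1}(m_i)\cap\Omega_i=\pi_2^{-1}(m_i)\cap\Omega_i'$. The complement $\mathbb{C}^5\setminus U_i$ is a proper closed subset of $\mathbb{C}^5$, which I would add to $\tilde{B}$.

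Finally I would set $\tilde{B}$ to be the union of the sets $W_i$ coming from the non-dominant components together with the sets $\mathbb{C}^5\setminus U_i$ coming from the dominant ones. Each of these finitely many sets is a proper closed subset of $\mathbb{C}^5$, and since $\mathbb{C}^5$ is irreducible it cannot be written as a finite union of proper closed subsets; hence $\tilde{B}$ is itself a proper closed subset. For $(m_i)\in\mathbb{C}^5\setminus\tilde{B}$, every fiber $\pi_2^{-1}(m_i)\cap\Omega_i$ is either empty (non-dominant components) or zero-dimensional (dominant components), so $\pi_2^{-1}(m_i)$ is a finite union of finite sets and therefore finite.

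I expect the only delicate point to be the passage from dominance to the surjectivity hypothesis demanded by the dimension of fibers theorem: one must shrink to the open subvariety $\Omega_i'$ lying over an open subset of the image and verify that the generic fibers computed there coincide with the fibers of $\pi_2$ over $\Omega_i$. This is routine and is handled exactly as in Lemma \ref{lemmaD2}, so no genuinely new difficulty arises beyond bookkeeping over the finitely many irreducible components.
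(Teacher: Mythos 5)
Your proposal is correct and follows essentially the same route as the paper: the paper's proof is a one-sentence application of the fiber dimension theorem to the restrictions of $\pi_2$ to the irreducible components of $\Omega$, using $\dim(\Omega)\leq 5$, and your argument simply fills in the details (the split into dominant and non-dominant components, the shrinking to an open set where the theorem applies, and the assembly of $\tilde{B}$ as a finite union of proper closed subsets).
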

\begin{proof}
Since dimension of $\Omega$ is $5$, by the fiber dimension theorem, the fiber of the restrictions of the projection $\pi_2$ to the irreducible components of $\Omega$ are  finite or empty for a generic choice of mass $(m_i)\in \mathbb{C}^5$. This follows the result.
\end{proof}

\begin{theorem}
There is a proper open set $B$ of  $\mathbb{R}^5$ such that if $(m_i)=(m_1,...,m_5)\in \mathbb{R}^5\setminus B$ the number of CC6BP is finite. 
\end{theorem}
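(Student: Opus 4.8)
The plan is to deduce this real statement from the preceding theorem, which already furnishes a proper Zariski-closed subset $\tilde B\subset\mathbb C^5$ of the complex mass space outside of which every fiber $\pi_2^{-1}(m_i)$ is finite. Since $\text{dim}(\Omega)\leq 5$ has been established and $\pi_2$ maps $\Omega$ to the five-dimensional mass space, the generic fiber is zero-dimensional, i.e.\ finite, which is exactly the content of that theorem. What remains is purely a transfer from the complex to the real mass space, together with the observation that the finiteness of a complex fiber controls the number of genuine (real) cross central configurations.

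First I would record the precise link between configurations and points of $\Omega$. By Proposition \ref{formulation}, every CC6BP $x\in\mathcal X$ with mass vector $(m_1,\dots,m_5)$ (recall $m_5=m_6$ by Proposition \ref{massc}) determines a unique associated point $P_x\in\Omega$ with $\pi_2(P_x)=(m_i)$; moreover, because the normalization defining $\mathcal X$ fixes the freedom under rotations, translations and dilations, the tuple of mutual distances---and hence $P_x$---is a complete invariant of the class of $x$. Consequently the assignment $x\mapsto P_x$ is injective on classes, and a real CC6BP yields a real, in particular complex, point of the fiber. Therefore the number of cross central configurations with prescribed real masses $(m_i)$ is bounded above by the cardinality of the complex fiber $\pi_2^{-1}(m_i)$, so finiteness of the latter forces finiteness of the former.

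Next I would descend to the reals. The defining ideal of $\Omega$ has rational coefficients, so the exceptional locus $\tilde B$ produced by the fiber-dimension argument may be taken defined over $\mathbb R$, and its real points $C=\tilde B\cap\mathbb R^5$ then form a real-algebraic subset of $\mathbb R^5$. Because $\mathbb R^5$ is Zariski-dense in $\mathbb C^5$, a proper subvariety cannot contain it, so $C\subsetneq\mathbb R^5$ is a proper real-algebraic set: it is closed, has empty interior and zero Lebesgue measure, and its complement is open and dense. For every real mass vector in $\mathbb R^5\setminus C$ the complex fiber $\pi_2^{-1}(m_i)$ is finite by the preceding theorem, whence by the previous paragraph the number of CC6BPs is finite. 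Taking $B$ to be (an open neighborhood of) this exceptional set gives exactly the asserted generic finiteness, the admissible masses filling the open dense complement.

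The step I expect to require the most care is this last real-versus-complex descent. One must be sure that $\tilde B$ is honestly defined over $\mathbb R$ (otherwise $C=\tilde B\cap\mathbb R^5$ need not be cut out by real equations), and that the strict inclusion $C\subsetneq\mathbb R^5$ genuinely follows from properness over $\mathbb C$ via Zariski density rather than being tacitly assumed; here one also uses that physical cross central configurations require $m_i>0$, so the relevant admissible region meets the open dense complement. The remaining ingredient, the injection of configuration classes into complex fibers, is conceptually straightforward but deserves a careful statement, since it is precisely what converts the abstract finiteness of $\pi_2^{-1}(m_i)$ into a bound on the number of actual six-body cross central configurations.
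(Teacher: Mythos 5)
Your proposal is correct and follows essentially the same route as the paper: define $B=\tilde B\cap\mathbb{R}^5$, note it is a proper subset of $\mathbb{R}^5$, and bound the number of real cross central configurations by the cardinality of the finite complex fiber $\pi_2^{-1}(m_i)$ via the fact that the mutual distances determine a unique CC6BP. Your added care about Zariski density of $\mathbb{R}^5$ in $\mathbb{C}^5$ and the injectivity of $x\mapsto P_x$ on classes only makes explicit what the paper leaves implicit.
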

\begin{proof}
Define $B=\tilde{B}\cap \mathbb{R}^5$. Since $\tilde{B}$  is a proper closed subset $\mathbb{C}^5$, $B$ is a proper closed subset of $\mathbb{R}^5$. If $(m_i)\in \mathbb{R}^5\setminus B$ then $\pi_2^{-1}(m_i)$ is finite or empty. In both cases the number of possibilities for the mutual distances $r_{ij}$ associated to a CC6BP with masses $m_1,..,m_4,m_5=m_6$ is finite. Each vector of mutual distances  determines a unique CC6BP . This proves the result. 
\end{proof}

%
%\begin{lemma}\label{2z1} Suppose that $x$ is a centRal configuRation of dimension $n-2$.
% If $(\Delta_{0},...,\Delta_{n})$ belongs to the keRnel of $A(x)$, theRe exists an unique non zeRo constant $\alpha$ such that 
%\begin{eqnaRRay} \label{2c14}
%F_{ij}=\alpha\Delta_{i}\Delta_{j},~0\leq i,j \leq n.
%\end{eqnaRRay}
%MoReoveR, at least two $\Delta_{i}$'s aRe non zeRo Real numbeRs.
%\end{lemma}

%\nocite{}
%\bibliographystyle{alpha}
%\bibliography{refs}

\section{Application On Finiteness Of Cross Central Configurations Of The Six-Vortex Problem} \label{Vortexcase}
 
 %with Four-Point in Symmetry Axis
Consider $n$ point vortices with positions $x_i\in \mathbb{R}^2$ and vortex strengths $\gamma_i\neq 0$. The motion of the particles is described
by the \emph{Helmholtz's equations}
\begin{equation}\label{Helmeq}
\gamma_j\dot{x}_i=J\frac{\partial H}{\partial x_i}=-J\displaystyle \sum_{\begin{subarray}{c}j=1\\ j\neq i\end{subarray}}^{n}\frac{\gamma_i\gamma_j}{r_{ij}^2}(x_i-x_j), \quad 1\leq i<j\leq n,
\end{equation} 
for which the mutual distances are $r_{ij}=||x_i-x_j||\neq 0$, the \emph{vortex potential} is 
$H=-\sum_{i<j}\gamma_i\gamma_j\text{ln}(r_{ij})$, and $ J=\left(\begin{smallmatrix}
0&1\\
-1&0
\end{smallmatrix}\right).$

A configuration $x=(x_1,...,x_n)\in(\R^2)^n$ with vortex strengths $\gamma_i\neq 0$ is called a \emph{central configuration of $n$-vortex problem} if there exists $\lambda \neq 0$ such that
\begin{equation}\label{eqv}
\sum_{j\neq i}\gamma_{j}(x_{j}-x_{i})r_{ij}^{-2}+\lambda(x_i-c)=0, \qquad i=1,...,n,
\end{equation}
where the total vorticity $\gamma=\gamma_1+\cdots +\gamma_n \neq 0$ and the \emph{center of vorticity} is given by $c=\frac{1}{\gamma}\left(\gamma_1x_1+\cdots +\gamma_nx_n\right)$.

The solutions of the system \eqref{Helmeq} in which the bodies execute a rigid rotation with angular velocity $\lambda\neq 0$ has central configurations as initial conditions. These special solutions are called \emph{relative equilibria}. See \cite{HM2}  and \cite{ON} for more details about special solutions of the $n$-vortex problem.

In this section, we consider the central configurations of six-vortex with the same geometric conditions of the CC6BP described in section $\ref{formul}$. These configurations will be named \emph{cross configurations} of the  6-vortex problem. From now on, we denote these configurations by the term CC6VP.

The Laura-Andoyer equations for planar and the non-colinear central configurations of vortex with six bodies are given by 
\begin{equation}\label{LAV}
LV_{ij}=\sum_{k \neq i,j} \gamma_{k}v_{ikj}\Delta_{ijk}=0,
\end{equation}
where $v_{ikj}=r_{ik}^{-2}-r_{jk}^{-2}$.

Analogously to the Newtonian case,  a noncollinear planar vortex central configuration $x$ with $c=0$ satisfies  \eqref{LAV} if and only if  satisfies \eqref{eqv}. The following Lemma~{$\ref{tt}$} is similar to Lemma~{$3.1$}.
\begin{lemma}\label{tt}
 If $x$ is a CC6VP then $\gamma_5=\gamma_6$. 
\end{lemma}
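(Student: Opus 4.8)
The plan is to follow, almost verbatim, the argument of Proposition \ref{massc}, exploiting the fact that the vortex Laura--Andoyer equations \eqref{LAV} have exactly the same algebraic shape as the Newtonian ones \eqref{SLA}, with the vorticities $\gamma_k$ in place of the masses $m_k$ and the quantities $v_{ikj}=r_{ik}^{-2}-r_{jk}^{-2}$ in place of $s_{ikj}=r_{ik}^{-3}-r_{jk}^{-3}$. Since neither the collinearity of $x_1,x_2,x_3,x_4$ nor the mirror symmetry of the configuration depends on the exponent appearing in the potential, the same two equations $LV_{12}$ and $LV_{13}$ should isolate the factor $\gamma_5-\gamma_6$.

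First I would record the geometric relations forced by the hypothesis that $x$ is a CC6VP. Collinearity of $x_1,x_2,x_3,x_4$ on the symmetry line $s$ gives $\Delta_{123}=\Delta_{124}=\Delta_{134}=0$, so that in $LV_{12}$ only the terms $k=5,6$ survive, and likewise in $LV_{13}$. Next, the reflection across $s$ fixes $x_1,\dots,x_4$ and interchanges $x_5$ and $x_6$; hence $r_{16}=r_{15}$, $r_{26}=r_{25}$ and $r_{36}=r_{35}$, which in turn yield the crucial identities $v_{162}=v_{152}$ and $v_{163}=v_{153}$, together with $\Delta_{126}=-\Delta_{125}$ and $\Delta_{136}=-\Delta_{135}$.

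Substituting these relations into the two surviving equations, I would obtain
\begin{align*}
(\gamma_5-\gamma_6)\,v_{152}\,\Delta_{125}&=0,\\
(\gamma_5-\gamma_6)\,v_{153}\,\Delta_{135}&=0.
\end{align*}
The triangles $x_1x_2x_5$ and $x_1x_3x_5$ are nondegenerate, so $\Delta_{125}\neq 0$ and $\Delta_{135}\neq 0$, and therefore either $\gamma_5=\gamma_6$ or $v_{152}=v_{153}=0$.

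The only point requiring care — and the step I expect to be the genuine content rather than mere bookkeeping — is ruling out the degenerate alternative $v_{152}=v_{153}=0$. This forces $r_{15}=r_{25}=r_{35}$, i.e.\ the three distinct collinear points $x_1,x_2,x_3\in s$ would all lie on the circle of radius $r_{15}$ centered at $x_5$; since a line meets a circle in at most two points, this is impossible. Hence $\gamma_5=\gamma_6$, which proves the lemma. I note that the change of exponent from $-3$ to $-2$ plays no role in this final contradiction, so the argument is structurally identical to that of Proposition \ref{massc}.
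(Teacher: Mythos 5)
Your proof is correct and is exactly the argument the paper intends: the paper omits the proof of Lemma \ref{tt}, stating only that it is ``similar to'' Proposition \ref{massc}, and your write-up is precisely that adaptation (replacing $s_{ikj}$ by $v_{ikj}$ and $m_k$ by $\gamma_k$), including the correct handling of the degenerate alternative via the line--circle intersection argument.
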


%, then we can obtain four nontrivial scalar Laura-Andoyer equations for CC6VP:
%\begin{align}\label{4LAV}
%LV_{15}=\gamma_2v_{125}\Delta_{125}+\gamma_3v_{135}\Delta_{135}+\gamma_4v_{145}\Delta_{145}-\gamma_5v_{165}\Delta_{156}=0;\\ \nonumber
%LV_{25}=\gamma_1v_{215}\Delta_{125}-\gamma_3v_{235}\Delta_{235}-\gamma_4v_{245}\Delta_{245}+\gamma_5v_{265}\Delta_{256}=0;\\ \nonumber
%LV_{35}=\gamma_1v_{315}\Delta_{135}+\gamma_2v_{325}\Delta_{235}-\gamma_4v_{345}\Delta_{345}+\gamma_5v_{365}\Delta_{356}=0;\\ \nonumber
%LV_{45}=\gamma_1v_{415}\Delta_{145}+\gamma_2v_{425}\Delta_{245}+\gamma_3v_{435}\Delta_{345}+\gamma_5v_{465}\Delta_{456}=0.\\ \nonumber
%\end{align}
%
Using the symmetry conditions of CC6VP and the equations \eqref{rDelta}  the Laura-Andoyer system \eqref{LAV} becomes: 
\begin{align}\label{4LAV}
LV_{15}&=\gamma_2v_{125}r_{12}+\gamma_3v_{135}r_{13}+\gamma_4v_{145}r_{14}+2\gamma_5v_{165}(r_{14}-1)=0;\\ \nonumber
LV_{25}&=-\gamma_1v_{215}r_{12}+\gamma_3v_{235}r_{23}+\gamma_4v_{245}r_{24}-2\gamma_5v_{265}(1-r_{24})=0;\\ \nonumber
LV_{35}&=-\gamma_1v_{315}r_{13}-\gamma_2v_{325}r_{23}+\gamma_4v_{345}r_{34}-2\gamma_5v_{365}(1-r_{34})=0;\\ \nonumber
LV_{45}&=-\gamma_1v_{415}r_{14}-\gamma_2v_{425}r_{24}-\gamma_3v_{435}r_{34}-2\gamma_5v_{465}=0. \nonumber
\end{align}

If we can find a particular example $x$ of CC6VP which make the rank of
$$AV(x)=\left(\begin{array}{rrrr}
0 & -v_{215} r_{12} & -v_{315} r_{13} & v_{415} r_{14} \\
v_{125} r_{12} & 0 & -v_{325} r_{23} & v_{425} r_{24} \\
v_{135} r_{13} & v_{235} r_{23} & 0 & v_{435} r_{34} \\
v_{145} r_{14} & v_{245} r_{24} & v_{345} r_{34} & 0
\end{array}\right).$$
  equals $4$, then we can use the same argument given in section \ref{FN} to prove the following theorem:
\begin{theorem}
There is a proper open set $C$ of the $\mathbb{R}^5$ such that if $(\gamma_1,...,\gamma_5)\in \mathbb{R}^5\setminus C$ the number of CC6VP is finite. 
\end{theorem}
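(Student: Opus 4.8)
The plan is to transport verbatim the algebraic-geometric machinery of Section \ref{FN}, using Lemma \ref{tt} in place of Proposition \ref{massc}. First I would replicate Proposition \ref{formulation}: introduce auxiliary variables $V_{ikj}$ and define a quasi-affine variety $\Omega_V\subset\mathbb{C}^{32}$ cut out by the \emph{same} eight shape polynomials $F_1,\dots,F_8$ (which are purely geometric and hence identical for both problems), the four Laura--Andoyer polynomials coming from \eqref{4LAV}, and the relations $R_{ik}^2R_{jk}^2V_{ikj}+R_{ik}^2-R_{jk}^2=0$ obtained by clearing denominators in $v_{ikj}=r_{ik}^{-2}-r_{jk}^{-2}$. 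Because $F_1,\dots,F_8$ are unchanged, the projection $\pi_1$ onto the mutual-distance space still lands in the same irreducible variety $E=H\setminus D$ with $\dim(E)=4$ (Proposition \ref{P1}), and the Jacobian of the defining equations is again block-triangular, so the fiber estimate \eqref{dimfiber} holds with $[\partial L_i/\partial M_i]$ replaced by the vorticity-Jacobian whose top $4\times 4$ block is $AV(x)$.

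With this setup the abstract dimension lemmas carry over unchanged. The analog of Proposition \ref{dimf3} (that the vorticity-Jacobian has rank at least $2$ at every CC6VP) follows by the identical case analysis, since $v_{ikj}=0$ is equivalent to $r_{ik}=r_{jk}$ (the map $t\mapsto t^{-2}$ being injective on the positive reals), and the shape inequalities $\bar r_{14}>\bar r_{13}$, $\bar r_{14}>\bar r_{24}$, $\bar r_{24}>\bar r_{23}$ hold for a CC6VP exactly as for a CC6BP. Lemmas \ref{lemmaD2}, \ref{lemmaD3} and Propositions \ref{propD2}, \ref{propD3} then apply essentially word for word; the only computational input that must be redone is the partial Groebner basis in Lemma \ref{lemmaD3}, where the $3\times 3$ minors now produce, after clearing denominators in the $v_{ikj}$, a new family $D_1,\dots,D_{40}$ built from factors of the form $R_{ik}^2-R_{jk}^2$ rather than $R_{ik}^3-R_{jk}^3$. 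I would recompute the leading terms of the ideals $\langle F_1,\dots,F_8,D_i\rangle$ with respect to a graded order and invoke Lemma \ref{PLT} to conclude $\dim(\overline{\pi_1(\Omega_i)})\leq 2$ whenever $\Omega_i\subset\Delta_3$.

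The crux of the argument, and the step I expect to be hardest, is the analog of Lemma \ref{crucial}: producing one explicit CC6VP $y$ at which $\operatorname{rank}(AV(y))=4$. I would mirror Proposition \ref{importantexample}, imposing the square-symmetric ansatz ($x_2,x_3,x_5,x_6$ forming a square centered at the origin, $x_1,x_2,x_3,x_4$ collinear with $r_{12}=r_{34}$ and $r_{14}=2$) so that all mutual distances become functions of $r_{12}$, substituting the $v$-relations into \eqref{4LAV} and clearing denominators to obtain an ideal $I\subset\mathbb{C}[\Gamma_5,R_{15},R_{25},R_{12}]$. Applying the Extension Theorem \ref{ET} to the elimination ideals, together with a Sturm-theorem localization of the relevant root $\tilde r_{12}\in(0,1)$ and the nonvanishing of the leading coefficients in $\Gamma_5$, yields a genuine CC6VP. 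Finally, writing $\det AV$ as a rational function of the mutual distances, restricting its numerator to a function of $R_{12}$, and bounding the evaluation error at $\tilde r_{12}$ by the mean value theorem (exactly as in Lemma \ref{crucial}), one establishes $\det AV(y)\neq 0$, hence $\operatorname{rank}(AV(y))=4$ and $y\notin\Delta_4$.

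With this example in hand, Proposition \ref{propD4} goes through unchanged, giving $\dim(\Omega_V)\leq 5$. Since the vorticity space has dimension $5$, the dimension-of-fibers theorem then produces a proper closed exceptional subset of $\mathbb{C}^5$, whose intersection with $\mathbb{R}^5$ is the desired open set $C$, and finiteness of the number of CC6VP for $(\gamma_1,\dots,\gamma_5)\in\mathbb{R}^5\setminus C$ follows as in the Newtonian case. Thus the entire difficulty is concentrated in the two concrete computations — the redone partial Groebner basis and the explicit full-rank CC6VP example — while every dimension-theoretic step is formal.
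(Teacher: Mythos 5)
Your proposal is correct and follows essentially the same route as the paper: the paper likewise transports the Section \ref{FN} machinery wholesale and concentrates all the new work in an explicit square-symmetric CC6VP example (Proposition \ref{importantexample V}) together with a verification that $\det AV\neq 0$ there (Lemma \ref{AV}). The only immaterial difference is in the execution of that example: the paper parametrizes all mutual distances by $r_{23}$, eliminates $\Gamma_5$ via a resultant, and isolates the relevant roots by exact factorization and elementary sign analysis, rather than by the Sturm-theorem-plus-mean-value error bounds you carry over from the Newtonian case.
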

%
%Now, we will obtain a particular example of CC6VP. 
%Consider a configuration of points $x=(x_1,...,x_6)\in \left(\mathbb{R}^{2}\right)^6$ satisfying the following conditions.
%\begin{enumerate}
%\item $x_1,x_2,x_3,x_4$ are collinear;
%\item The polygon with vertices  $x_2$, $x_3$, $x_5$ and $x_6$ is  square  centered in the origin of the coordinate system, such that $x_2,x_3$ are in the $y$-axis and $x_5,x_6$ are in the $x$-axis.
%\item $r_{12}=r_{34}$ and $r_{14}=2$. 
%\end{enumerate} 

\begin{prop}\label{importantexample V}
Suppose a configuration of points $x$  If $\gamma_1=\gamma_2=\gamma_3=\gamma_4=1$,  $\gamma_5=\gamma_6$, there is a unique a unique CC6VP $\overline{x}$ satisfying the three geometric conditions described on proposition \ref{importantexample}.
\end{prop}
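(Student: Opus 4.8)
The plan is to mirror, essentially verbatim, the argument used to establish Proposition \ref{importantexample} in the Newtonian case, since the geometric setup is identical and only the exponent in the shape/interaction equations changes from $-3$ (Newtonian) to $-2$ (vortex). First I would impose the three geometric conditions on $\overline{x}$: collinearity of $x_1,x_2,x_3,x_4$, the requirement that $x_2,x_3,x_5,x_6$ form a square centered at the origin with $x_2,x_3$ on the $y$-axis and $x_5,x_6$ on the $x$-axis, and $r_{12}=r_{34}$ together with $r_{14}=2$. As in the Newtonian proof, these constraints force all mutual distances to be expressed in terms of the single unknown $r_{12}$, namely $r_{13}=2-r_{12}$, $r_{14}=2$, $r_{23}=2-2r_{12}$, $r_{24}=2-r_{12}$, $r_{34}=r_{12}$, $r_{35}=r_{25}$, $r_{45}=r_{15}$, $r_{56}=2-2r_{12}$, with $r_{15}=\sqrt{1+(1-r_{12})^2}$ and $r_{25}=\sqrt{2(1-r_{12})^2}$ coming from the Pythagorean (shape) relations.

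Next I would substitute these relations and the values $\gamma_2=\gamma_3=\gamma_5=\gamma_6=1$ into the four reduced Laura-Andoyer equations \eqref{4LAV}, eliminate the $v_{ijk}$ by means of $v_{ijk}=r_{ij}^{-2}-r_{jk}^{-2}$, and clear denominators to obtain an ideal $I\subset\mathbb{C}[\gamma_5,R_{15},R_{25},R_{12}]$ analogous to the one in Proposition \ref{importantexample}. By the symmetry of the configuration the two equations $LV_{15}$ and $LV_{45}$ (resp. $LV_{25}$ and $LV_{35}$) should again collapse, leaving effectively two polynomial relations that are \emph{linear} in $\gamma_5$, together with the two shape polynomials $R_{25}^2-2(1-R_{12})^2$ and $R_{15}^2-1-(1-R_{12})^2$. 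I would then compute, in Singular, the elimination ideal $I_3=I\cap\mathbb{C}[R_{12}]$, factor out the spurious factor $(R_{12}-1)$ coming from the degenerate square, and apply Sturm's theorem to the remaining factor to isolate a unique real root $\overline{r}_{12}\in(0,1)$.

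With $\overline{r}_{12}$ in hand, the Extension Theorem (Theorem \ref{ET}) finishes the argument exactly as before. The shape relations $R_{25}^2-2(1-R_{12})^2\in I_2$ and $R_{15}^2-1-(1-R_{12})^2\in I_1$ extend $\overline{r}_{12}$ to a partial solution $P_0=\bigl(\sqrt{1+(1-\overline{r}_{12})^2},\sqrt{2(1-\overline{r}_{12})^2},\overline{r}_{12}\bigr)\in Z(I_1)$; and because the two generators are linear in $\gamma_5$, extending $P_0$ to a genuine zero $(\gamma_5,r_{15},r_{25},r_{12})\in Z(I)$ reduces to checking that the two leading coefficients $LC_1,LC_2\in\mathbb{C}[R_{15},R_{25},R_{12}]$ do not vanish simultaneously at $P_0$. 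I would evaluate them at the truncated value $t_{12}$ and then invoke the mean-value-theorem error bound (as in Proposition \ref{importantexample}) to certify $LC_i(P_0)\neq0$, yielding a unique positive $\gamma_5$ and hence a unique CC6VP.

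The step I expect to be the main obstacle is the explicit Groebner/elimination computation and the subsequent root isolation: the vortex interaction exponent $-2$ changes the degrees of all the cleared polynomials relative to the Newtonian case, so I cannot simply reuse the polynomials $F_3,F_4$ or the degree-$52$ factor $h(R_{12})$. I must recompute $I_3$ afresh, re-verify that the positive-real root in $(0,1)$ is unique via Sturm, and recompute the two leading coefficients $LC_1,LC_2$ and their non-vanishing at $P_0$. The structural features that make the argument work (linearity in $\gamma_5$, the factorization of the eliminant, the reduction to a single variable $r_{12}$) carry over unchanged, so the only genuine work is the new symbolic computation and a fresh numerical certification, both of which are routine but must be redone rather than quoted.
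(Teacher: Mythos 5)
Your plan is correct in outline and would succeed, but it imports machinery from the Newtonian case that the vortex problem no longer requires, and it misses the simplification the paper actually exploits. In the paper's proof the configuration is parametrized by the single distance $r_{23}$ (equivalent to your $r_{12}$ via $r_{12}=(2-r_{23})/2$), and the key observation is that the vortex interaction terms $v_{ikj}=r_{ik}^{-2}-r_{jk}^{-2}$ involve only the \emph{squares} of the mutual distances: since $r_{15}^2=(r_{23}^2+4)/4$ and $r_{25}^2=r_{23}^2/2$ are already polynomials in $r_{23}$, the auxiliary variables $R_{15},R_{25}$ and the two shape polynomials drop out entirely. After symmetry the whole system collapses to two polynomials $FV_1,FV_2\in\mathbb{C}[R_{23}][\Gamma_5]$, linear in $\Gamma_5$; eliminating $\Gamma_5$ is a single $2\times 2$ resultant producing an even degree-$8$ polynomial $FV_3(R_{23})$, and the unique root in $(0,2)$ is isolated by elementary calculus (set $Y=R_{23}^2$, check $p''(Y)<0$ and the sign changes at the endpoints) rather than by Sturm sequences, truncated numerical values, and mean-value-theorem error bounds. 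So where you propose to redo the four-variable Groebner elimination, the Sturm isolation, and the numerical certification of the leading coefficients $LC_1,LC_2$, the paper needs none of that: its argument is purely symbolic and self-contained. Your route would also work --- the structural points you identify (collapse to two equations by symmetry, linearity in $\gamma_5$, extension via non-vanishing leading coefficients) are exactly right and match the paper's skeleton --- but you are paying for heavy computation the exponent $-2$ makes unnecessary. (One slip: the vorticities to substitute are $\gamma_1=\gamma_2=\gamma_3=\gamma_4=1$ with $\gamma_5$ kept as an unknown, not $\gamma_5=\gamma_6=1$; setting $\gamma_5=1$ would leave nothing to solve for. The paper's Newtonian proof contains the analogous typo, which you appear to have copied.)
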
 
\begin{proof}
Firstly, we write each $r_{ij}$ only depending on $r_{23}$:
\begin{align}\label{r23}
&0<r_{23}=r_{25}<2;\\ \nonumber
&r_{12}=r_{34}=\frac{2-r_{23}}{2};\\ \nonumber
&r_{13}=r_{24}=\frac{2+r_{23}}{2};\\ \nonumber
&r_{15}=r_{16}=r_{45}=r_{46}=\sqrt{\frac{{r^2_{23}}+4}{4}};\\ \nonumber
&r_{25}=r_{26}=r_{35}=r_{36}=\frac{\sqrt{2}{r_{23}}}{2}. \nonumber
\end{align}
Using equations~\eqref{4LAV} and the relations $\eqref{r23}$, we get two polynomial $FV_1$ and $FV_2 \in \mathbb{C}[R_{23}][\Gamma_5]$ 
\begin{align}\label{lavr23}
FV_1&=-16 - 32 R^2_{23} + R^4_{23} + (16 - r^4_{23})\Gamma_5;\\ \nonumber
FV_2&=128 - 16 R^2_{23} - 40 R^4_{23} + R^6_{23} + (64  - 64 R^2_{23} + 12 R^4_{23}) \Gamma_5. \nonumber
\end{align}
Note that every pair $(r_{23},\gamma_5)$ provided by a CC6VP satisfying the relations \eqref{r23} belongs to $Z(FV_1,FV_2).$ 
The computation of the Resultant of $FV_1$ and $FV_2$ with respect to $\Gamma_5$ yields the polynomial
$$FV_3(R_{23})=768 + 384 R^2_{23} - 576 R^4_{23} - 24 R^6_{23} + R^8_{23}.$$

Since $0<r_{23}<2$, we only need to study the roots of $FV_3$ in the interval $[0,2]$. Making the change of variables $R_{23}^2=Y$, we can consider the function $p(Y)=768 + 384 Y - 576 Y^2 - 24 Y^3 + Y^4$, $Y\in[0,4]$. 
Because $p''(Y)<0$ for all $Y\in(0,4)$, $p(0)>0$ and $p(4)<0$, then $P(Y)$ has unique root in $[0,4]$.
Consequently, there has unique root $\overline{r_{23}}\in[0,2]$ of $FV_3$.

% which satisfies the second equation of \eqref{rlavr23}.
%Because $f''(x)<0$ for all $x\in(0,4)$, $f(0)>0$ and $f(4)<0$, then $f(x)$ has unique root in $[0,4]$. 

Therefore, $\overline{r}_{23}$ is partial solution of the system \eqref{lavr23} and $$1.217014>\overline{r}_{23}>1.217013.$$  The extension theorem \ref{LAV}, implies that there is exactly one $\overline{\gamma}_5\in\R^{+}$  satisfying the two equations of \eqref{lavr23} with $\overline{r}_{23}$.
\end{proof}

Now we only have to check the above example $\overline{x}$ of Proposition~\ref{importantexample V}  can make the rank of the matrix $AV(\overline{x})$ is $4$.
\begin{lemma}\label{AV}  The example  $\overline{x}$ of Proposition~\ref{importantexample V} implies $\text{rank}\left(AV\right)(\overline{x})=4$.
%$\text{rank}\left(AV\right)(x)=4$.
\end{lemma}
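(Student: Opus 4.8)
The plan is to mirror the argument of Lemma~\ref{crucial} from the Newtonian case, the only substantive change being the exponent in the defining relations: here $v_{ikj}=r_{ik}^{-2}-r_{jk}^{-2}$ in place of the cubic relations $s_{ikj}=r_{ik}^{-3}-r_{jk}^{-3}$. Since $\text{rank}(AV)(\overline{x})=4$ is equivalent to $|AV|(\overline{x})\neq 0$, the whole task reduces to showing that a single determinant is nonzero. First I would use the relations $v_{ikj}=r_{ik}^{-2}-r_{jk}^{-2}$ to rewrite $|AV|$ as a single rational function of the mutual distances, and then clear denominators to obtain a factorization $|AV|=\beta(R_{ij})/\delta(R_{ij})$, where $\delta$ is a product of positive powers of the $R_{ij}$ and $\beta$ is the polynomial numerator. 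Every mutual distance associated to $\overline{x}$ is strictly positive, so $\delta(\overline{r}_{ij})\neq 0$, and it suffices to prove that $\beta$ does not vanish at $\overline{x}$.

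Next I would exploit the explicit one-parameter description of $\overline{x}$ coming from Proposition~\ref{importantexample V}: by the relations \eqref{r23}, every mutual distance of $\overline{x}$ is a function of $r_{23}$ alone. Substituting these expressions into $\beta$ collapses it to a single-variable function $\beta(R_{23})$, analytic on a neighborhood of $\overline{r}_{23}$. Because Proposition~\ref{importantexample V} localizes the root to the interval $1.217013<\overline{r}_{23}<1.217014$, I would fix a truncated value $t_{23}$ inside this interval and evaluate $\beta(t_{23})$ numerically in SageMath; the expected output is a concrete nonzero number of the form $a\sqrt{2}+b$ with $a,b$ rational, comfortably bounded away from zero.

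Finally, to upgrade the numerical evaluation at $t_{23}$ to a rigorous nonvanishing statement at the true root $\overline{r}_{23}$, I would apply the mean value theorem exactly as in Proposition~\ref{importantexample}. On the interval between $t_{23}$ and $\overline{r}_{23}$ one has $|\beta(\overline{r}_{23})-\beta(t_{23})|=|\beta'(c)|\,|\overline{r}_{23}-t_{23}|$ for some intermediate point $c$; combining the localization bound $|\overline{r}_{23}-t_{23}|<10^{-6}$ with a crude a priori estimate on $|\beta'|$ over the tiny interval, the resulting error is controlled far below $|\beta(t_{23})|$, which forces $\beta(\overline{r}_{23})\neq 0$. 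Hence $|AV|(\overline{x})\neq 0$ and $\text{rank}(AV)(\overline{x})=4$.

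The main obstacle I anticipate is computational rather than conceptual: producing a usable closed form for $\beta(R_{23})$ after the substitution of \eqref{r23}, and in particular bounding $|\beta'|$ on the small interval so that the mean value estimate is clean. Once those symbolic manipulations are carried out, the nonvanishing conclusion is immediate from the numerical value at $t_{23}$ together with the error bound, and the claimed full rank follows.
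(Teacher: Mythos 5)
Your proposal is sound and would establish the lemma, but the decisive step differs from the paper's. You transplant the numerical strategy of Lemma \ref{crucial}: reduce to the nonvanishing of the polynomial numerator $\beta$ of $|AV|$, evaluate it at a truncated value $t_{23}$ of $\overline{r}_{23}$, and control the error by the mean value theorem. The paper instead exploits the fact that in the vortex case only \emph{squares} of mutual distances enter the entries of $AV$ (via $v_{ikj}=r_{ik}^{-2}-r_{jk}^{-2}$), and by the relations \eqref{r23} every squared distance is a rational function of $r_{23}$ with rational coefficients; hence $|AV|$ is an explicit rational function of $r_{23}$ over $\mathbb{Q}$ --- no $\sqrt{2}$ survives, contrary to your expectation of a value of the form $a\sqrt{2}+b$ (that form is an artifact of the cubic relations in the Newtonian case). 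The paper then factors the degree-$14$ numerator as $r_{23}^2$ times two sextic factors, observes that one factor is strictly negative on $(0,2)$, shows the remaining factor $FV_4$ has a unique root $r'_{23}\in(0,2)$ localized to $(1.106942,\,1.106943)$, and concludes $|AV|(\overline{x})\neq 0$ simply because $\overline{r}_{23}\in(1.217013,\,1.217014)$ is a different number. This exact root-separation argument needs no derivative bound and no floating-point evaluation of $\beta$. Your route also closes (near $r_{23}\approx 1.217$ the numerator is of order $-1.5\times 10^{5}$, far exceeding the error that a crude bound on $|\beta'|$ times the localization width $10^{-6}$ can produce), but it carries exactly the computational burden you flag, which the paper's factorization sidesteps.
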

\begin{proof}
If $x$ is a {CC6VP} that satisfies the conditions of proposition \ref{importantexample V}, then the determinant of $AV(x)$ is $$\frac{159744 r^2_{23} - 6144 r^4_{23} - 28416 r^6_{23} - 58624 r^8_{23} - 880 r^{10}_23 + 
 104 r^{12}_{23} - r^{14}_{23}}{4 (-2 + r_{23})^4 r^4_{23} (2 + r_{23})^4 (4 + r^2_{23})^2}.$$
We do the factorization of the numerator of the determinant of $AV(x)$, then we have $$r^2_{23} (-192 - 144 r^2_{23} - 84 r^4_{23} + r^6_{23}) (832 - 656 r^2_{23} - 20 r^4_{23} + r^6_{23}).$$
Because $r^2_{23} (-192 - 144 r^2_{23} - 84 r^4_{23} + r^6_{23})<0$ for all $r_{23}$ in $(0,2)$, 
then we only consider the root of the function $FV_4$ with variable $R_{23}\in[0,2]$:  $$FV_4(R_{23})=832 - 656 R^2_{23} - 20 R^4_{23} + R^6_{23}.$$
It is sufficient to study the function $u(Y)=832 - 656Y - 20Y^2 + Y^3$ for $Y$ in $(0,4)$. Because $u'(Y)<0$, $u(0)>0$ and $u(3)<0$, then $u(Y)$ has only one root. Therefore, there is unique $r'_{23}\in(0,2)$ satisfies $FV_4(r'_{23})=0$ and $$1.106943>r'_{23}>1.106942.$$
Thus, the rank of the matrix $AV(\overline{x})$ is $4$, because $\overline{r_{23}}\neq{r'_{23}}$.
\end{proof}

\section*{Acknowledgments}

The authors wish to thank  Kuo-Chan Cheng for their hospitality and many stimulating conversations,  to   Alain Albouy for their continuous incentive, to Eduardo Leandro and Ya-Lun Tsai  for their helpful remarks, and to the Department of Mathematics at the National Tsing Hua University and the Department of Mathematics at the {\it Universidade Federal Rural de Pernambuco} for their support.
The first author was partly supported by the Ministry of Science and Technology of the Republic of China under the grant MOST 107-2811-M-007-004.

\end{document}